\newcommand{\set}[1]{\left\{#1\right\}}
\newcommand{\R}{\mathbb{R}}
\newcommand{\Dom}{\operatorname{Dom}}
\newcommand{\dive}{\operatorname{div}}
\newcommand{\osc}{\operatornamewithlimits{osc}}
\renewcommand{\H}{\mathcal{H}}
\newcommand{\dist}{\operatorname{dist}}
\newtheorem{thm}{Theorem}[section]
\newtheorem{prop}[thm]{Proposition}
\newtheorem{cor}[thm]{Corollary}
\newtheorem{lem}[thm]{Lemma}
\theoremstyle{definition}
\newtheorem{defn}[thm]{Definition}
\newtheorem{rem}[thm]{Remark}
\numberwithin{equation}{section}
\author[L. A. Caffarelli]{Luis A. Caffarelli}
\author[P. R. Stinga]{Pablo Ra\'ul Stinga}
\address{Department of Mathematics\\
The University of Texas at Austin\\
1 University Station, C1200, Austin\\
TX 78712-1202, United States of America}
\email{caffarel@math.utexas.edu, stinga@math.utexas.edu}
\thanks{The first author was supported by NSF Grant DMS-0654267. The second author was
supported by grant MTM2011-28149-C02-01 from Spanish Government}
\keywords{Fractional Laplacian, fractional divergence form elliptic operator,
Schauder estimates, fundamental solution, semigroup language, extension problem}
\subjclass[2010]{Primary: 35R11, 35B65. Secondary: 35K05, 35B45, 46E35}
\begin{document}

\title[Fractional elliptic equations, Caccioppoli estimates and regularity]{Fractional elliptic equations, \\
 Caccioppoli estimates and regularity}

\begin{abstract}
Let $L=-\dive_x(A(x)\nabla_x)$ be a uniformly elliptic operator in divergence form in a bounded
domain $\Omega$.
We consider the fractional nonlocal equations
$$\begin{cases}
L^su=f,&\hbox{in}~\Omega,\\
u=0,&\hbox{on}~\partial\Omega,
\end{cases}\quad
\hbox{and}\quad
\begin{cases}
L^su=f,&\hbox{in}~\Omega,\\
\partial_Au=0,&\hbox{on}~\partial\Omega.
\end{cases}$$
Here $L^s$, $0<s<1$, is the fractional power of $L$ and $\partial_Au$ 
is the conormal derivative of $u$ with respect to the coefficients $A(x)$.
We reproduce Caccioppoli type estimates that allow us to develop the regularity theory.
Indeed, we prove interior and boundary Schauder regularity estimates
depending on the smoothness of the coefficients $A(x)$, the right hand side $f$ and the boundary of the domain.
Moreover, we establish estimates for fundamental solutions
in the spirit of the classical result by Littman--Stampacchia--Weinberger
and we obtain nonlocal integro-differential formulas for $L^su(x)$.
Essential tools in the analysis are the semigroup language approach and the extension problem.
\end{abstract}

\maketitle


\section{Introduction}

In a bounded Lipschitz domain $\Omega\subset\R^n$, $n\geq1$, we consider an elliptic operator
in divergence form
$$Lu=-\dive_x(A(x)\nabla_xu),$$
with boundary condition
$$\hbox{Dirichlet:}~u=0,\quad\hbox{or}\quad\hbox{Neumann:}~\partial_Au:=A(x)\nabla_xu\cdot\nu=0,
\quad\hbox{on}~\partial\Omega,$$
where $\nu$ is the exterior unit normal to $\partial\Omega$.
The coefficients are symmetric $A(x)=A^{ij}(x)=A^{ji}(x)$, $i,j=1,\ldots,n$,
 bounded and measurable in $\Omega$ and satisfy the uniform ellipticity condition
$\Lambda_1|\xi|^2\leq A(x)\xi\cdot\xi\leq\Lambda_2|\xi|^2$,
for all $\xi\in\R^n$ and almost every $x\in\Omega$,
for some ellipticity constants $0<\Lambda_1\leq\Lambda_2$.
In this paper we study interior and boundary regularity estimates for the fractional nonlocal problem
$L^su=f$ in $\Omega$, subject to the boundary conditions above,
in the cases when $f$ is H\"older continuous, see Theorems \ref{thm:interior Calpha}, 
\ref{thm:boundary Calpha} and \ref{thm:global Calpha Neumann}, and when $f$ is just $L^p$ integrable,
see Theorems \ref{thm:interior Lp} and \ref{thm:boundary Lp}. 
These estimates of course depend on the regularity of the coefficients $A$ and of the boundary of $\Omega$.
Our main tools are the semigroup language
approach as developed in \cite{Stinga} and the extension problem as introduced in \cite{Caffarelli-Silvestre}.
We obtain Caccioppoli type estimates that are combined with a compactness and approximation argument based on
the ideas of \cite{Caffarelli} to prove the regularity results.

Let us begin by considering the case of Dirichlet boundary condition.
By using the $L^2$-Dirichlet eigenvalues and eigenfunctions $(\lambda_k,\phi_k)_{k=0}^\infty$,
$\phi_k\in H^1_0(\Omega)$, of $L$ we can define the fractional powers
$L^su$, $0<s<1$, for $u$ in the domain $\Dom(L^s)\equiv\mathcal{H}^s$
(see Remark \ref{rem:H fancy en Hs}) in the natural way.
If $u(x)=\sum_{k=0}^\infty u_k\phi_k(x)$, $x\in\Omega$, then
$$L^su(x)=\sum_{k=0}^\infty\lambda_k^su_k\phi_k(x).$$
Observe that $u=0$ on $\partial\Omega$. Equivalently, we have the semigroup formula
\begin{equation}\label{abc}
L^su(x)=\frac{1}{\Gamma(-s)}\int_0^\infty\big(e^{-tL}u(x)-u(x)\big)\,\frac{dt}{t^{1+s}},
\end{equation}
where $\{e^{-tL}u\}_{t>0}$ is the heat diffusion semigroup generated by $L$
with Dirichlet boundary condition and $\Gamma$ is the Gamma function.
See Section \ref{section:2}. It is clear that
for $f$ in the dual space $\H^{-s}\equiv(\mathcal{H}^s)'$ there exists a unique solution $u\in\H^s$
to the fractional nonlocal equation
\begin{equation}\label{uma}
\begin{cases}
L^s u=f,& \hbox{in}~\Omega,\\
u=0, & \hbox{on}~\partial\Omega.
\end{cases}
\end{equation}
Starting from \eqref{abc} and by using the heat kernel for $e^{-tL}$ we are able to
obtain integro-differential formulas for $L^su(x)$ of the form
\begin{equation}\label{eq:formula puntual}
 \langle L^su,\psi\rangle=\int_{\Omega}\int_{\Omega}\big(u(x)-u(z)\big)\big(\psi(x)-\psi(z)\big)
 K_s(x,z)\,dx\,dz+\int_\Omega u(x)\psi(x)B_s(x)\,dx,
 \end{equation}
where $\psi\in\mathcal{H}^s$, see Theorem \ref{thm:puntual}.
Observe that this formula is parallel to the weak form interpretation of $Lu$ in $H^1_0(\Omega)$.
Estimates for the kernel $K_s(x,z)$ and the fundamental solution $G_s(x,z)$ of $L^s$
are contained in Theorems \ref{thm:estimaciones Ks},
\ref{thm:LSW} and \ref{thm:estimaciones Gs}. In particular, we show that the fundamental solution
satisfies the interior estimate
$$G_s(x,z)\sim\frac{1}{|x-z|^{n-2s}},\quad\hbox{for}~x,z\in\Omega,$$
when $n\neq 2s$, with a logarithmic estimate when $n=2s$.

Similarly, we can define the fractional powers of $L_N$, the operator $L$ subject
to Neumann boundary conditions.
In this case we use the Neumann eigenvalues and eigenfunctions
$(\mu_k,\varphi_k)_{k=0}^\infty$,  $\varphi_k\in H^1(\Omega)$, to define $L_N^su$ as
\begin{equation}\label{definition Neumann}
L_N^su(x)=\sum_{k=1}^\infty\mu_k^su_k\varphi_k(x).
\end{equation}
The formula in \eqref{abc} is
also valid for $L_N$ in place of $L$. Then we obtain the integro-differential formula
\begin{equation}\label{puntual Neumann}
 \langle L_N^su,\psi\rangle=\int_{\Omega}\int_{\Omega}\big(u(x)-u(z)\big)\big(\psi(x)-\psi(z)\big)
 K^N_s(x,z)\,dx\,dz,
 \end{equation}
 where the kernel $K^N_s(x,z)$ is given in terms of the heat kernel for $e^{-tL_N}$.
 Notice the difference between this formula and 
 the one in \eqref{eq:formula puntual} for the Dirichlet case. This is so because
 for Neumann boundary condition we have $e^{-tL_N}1\equiv1$, while 
 for Dirichlet condition $e^{-tL}1\neq1$.  Now if $\int_\Omega f\,dx=0$ then
  it follows that there exists a unique solution
 $u\in\Dom(L_N^s)=H^s(\Omega)$ to
\begin{equation}\label{uman}
 \begin{cases}
 L^su=f,&\hbox{in}~\Omega,\\
 \partial_Au=0,&\hbox{on}~\partial\Omega,
 \end{cases}
 \end{equation}
  with $\int_\Omega u\,dx=0$. For the details see Section \ref{section:7}.
 
 It is already known, see \cite{Stinga-Torrea},
 that the fractional operators \eqref{abc} can be described as Dirichlet--to--Neumann maps for an extension problem
 in the spirit of the extension problem for the fractional Laplacian on $\R^n$ of \cite{Caffarelli-Silvestre}.
In fact, let $U=U(x,y):\Omega\times(0,\infty)\to\R$ be the solution to
the degenerate elliptic equation with $A_2$ weight
 \begin{equation}\label{dos estrellitas}
 \begin{cases}
  \dive(y^aB(x)\nabla U)=0,& \hbox{in}~\Omega\times(0,\infty),\\
  U=0,& \hbox{on}~\partial\Omega\times[0,\infty),\\
  U(x,0)=u(x),& \hbox{on}~\Omega,
\end{cases}
\end{equation}
where
\begin{equation}\label{B}
B(x):=\Bigg[\begin{array}{cc}
A(x) & 0 \\
0 & 1
\end{array}\Bigg]\in\R^{n+1}\times\R^{n+1},\quad\hbox{and}\quad a:=1-2s\in(-1,1).
\end{equation}
Then, for $c_s=|\Gamma(-s)|/(4^s\Gamma(s))>0$,
$$-\frac{1}{2s}\lim_{y\to0^+}y^aU_y(x,y)=-\lim_{y\to0^+}
\frac{U(x,y)-U(x,0)}{y^{2s}}=c_sL^su(x),\quad x\in\Omega.$$
Moreover, there are explicit formulas for $U$ in terms of the semigroup $e^{-tL}$.
See Theorem \ref{thm:extension} and the comments before it.
When $A(x)=I$ and $\Omega=\R^n$ in \eqref{dos estrellitas}
we recover the extension problem for the fractional
Laplacian of \cite{Caffarelli-Silvestre}.
By replacing the second equation for $U$ in \eqref{dos estrellitas} by $\partial_AU(x,y)=0$
 for all $y\geq0$ we get the extension problem for the fractional operator $L^s_N$.
  
Fractional powers of elliptic operators as those above arise naturally in applications,
for instance, in nonlinear elasticity, probability and mathematical biology.
Consider for example the following thin obstacle problem for an elastic
membrane $U(x,y):\Omega\times(0,\infty)\to\R$ and an obstacle $\varphi:\Omega\to\R$
such that $\varphi\leq0$ on $\partial\Omega$:
$$\begin{cases}
U_{yy}-LU=0=\dive(B(x)\nabla U),&\hbox{in}~\Omega\times(0,\infty),\\
U(x,0)\geq\varphi(x),&\hbox{on}~\Omega,\\
U_y(x,0)\leq0,&\hbox{on}~\{U(x,0)=\varphi(x)\},\\
U_y(x,0)=0,&\hbox{on}~\{U(x,0)>\varphi(x)\}.
\end{cases}$$
We also require for the membrane $U$ to be at level zero (Dirichlet)
or to have zero flux (Neumann) on $\partial\Omega\times[0,\infty)$.
The classical case of the Signorini problem is when $A(x)=I$, so the membrane is a harmonic
function in $\Omega\times(0,\infty)$.
It is cleat that the solution $U(x,y)$ to the first equation above with boundary datum $u(x):=U(x,0)$
and Dirichlet (or Neumann) boundary condition on $\partial\Omega\times[0,\infty)$
is given by the Poisson semigroup generated by $L$ (or $L_N$):
$$U(x,y)=e^{-yL^{1/2}}u(x),\quad x\in\Omega,~y\geq0.$$
Observe that $U_y(x,y)=-L^{1/2}e^{-yL^{1/2}}u(x)$ (see \cite{Stinga}).
Therefore we readily see that the membrane solves the thin obstacle problem
if and only if its trace $u$ solves the fractional obstacle problem
$$\begin{cases}
u\geq\varphi,&\hbox{in}~\Omega,\\
L^{1/2}u\geq0,&\hbox{in}~\{u=\varphi\},\\
L^{1/2}u=0,&\hbox{in}~\{u>\varphi\},
\end{cases}$$
with $u=0$ (or $\partial_A u=0$) on $\partial\Omega$, see \cite{Stinga}. This obstacle problem for $L=-\Delta$ and $\Omega=\R^n$
was studied in \cite{Caffarelli-Salsa-Silvestre, Silvestre CPAM}.
Another application comes from the theory of stochastic processes.
It is known that there is a Markov process $Y_t$ having as generator
the fractional power $(-\Delta_D)^s$ of the Dirichlet Laplacian $-\Delta_D$ on $\Omega$.
Indeed,  we first kill the Wiener process $X_t$ at $\tau_\Omega$, 
the first exit time of $X_t$ from $\Omega$, and then we subordinate the killed
Wiener process with an $s$-stable subordinator $T_t$. Hence $Y_t=X_{T_t}$ is the desired process,
see for example \cite{Song-Vondracek} and references therein. For a semilinear problem
involving the fractional Dirichlet Laplacian see \cite{CDDS} and references therein.
By considering nonlocal chemical diffusion in the Keller--Segel model one is led to
a semilinear problem for the fractional Neumann Laplacian, see \cite{Stinga-Volzone}.
Finally, we mention that finite element approximations for the fractional problem \eqref{uma}
were studied in \cite{Nochetto-Otarola-Salgado} by using the extension problem.

We now present the interior regularity estimates.

\begin{thm}[Interior regularity for $f$ in $C^\alpha$]\label{thm:interior Calpha}
Assume that $\Omega$ is a bounded Lipschitz domain and that $f\in C^{0,\alpha}(\Omega)$,
for some $0<\alpha<1$. Let $u$ be a solution to \eqref{uma} or \eqref{uman}.
 \begin{enumerate}[$(1)$]
  \item Suppose that $0<\alpha+2s<1$ and that $A(x)$ is continuous in $\Omega$.
  Then $u\in C^{0,\alpha+2s}({\Omega})$ and
  $$[u]_{C^{0,\alpha+2s}({\Omega})}\leq C\big(\|u\|_{L^2(\Omega)}+[u]_{H^s(\Omega)}
  +\|f\|_{C^{0,\alpha}(\Omega)}\big).$$
   \item Suppose that $1<\alpha+2s<2$ and that $A(x)$ is in $C^{0,\alpha+2s-1}(\Omega)$.
   Then $u\in C^{1,\alpha+2s-1}({\Omega})$ and
  $$[u]_{C^{1,\alpha+2s-1}({\Omega})}\leq C\big(\|u\|_{L^2(\Omega)}+[u]_{H^s(\Omega)}
  +\|f\|_{C^{0,\alpha}(\Omega)}\big).$$
 \end{enumerate}
 The constants $C$ above depend only on ellipticity, $n$, $\Omega$, $\alpha$, $s$ and
 the modulus of continuity of $A(x)$.
\end{thm}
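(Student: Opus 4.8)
The plan is to run the compactness-and-approximation scheme of \cite{Caffarelli} directly at the level of the extension problem, using the Caccioppoli estimates as the source of compactness. Since the estimate is interior, the Dirichlet and Neumann conditions on $\partial\Omega\times[0,\infty)$ are irrelevant and we treat \eqref{uma} and \eqref{uman} at once. Fix an interior point, which after translation we take to be the origin, and normalize $\|u\|_{L^2(\Omega)}+[u]_{H^s(\Omega)}+\|f\|_{C^{0,\alpha}(\Omega)}\leq1$. Let $U=U(x,y)$ be the solution of the extension problem \eqref{dos estrellitas}; then $U(\cdot,0)=u$, $\dive(y^aB(x)\nabla U)=0$ in a neighborhood of $0$ in $\Omega\times(0,\infty)$, and $\lim_{y\to0^+}y^aU_y(x,y)=-2sc_sf(x)$. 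The energy identity for the extension combined with $\langle L^su,u\rangle=\langle f,u\rangle$ controls $\int y^a|\nabla U|^2$ on a fixed half-ball by the normalization, which gives the bound at the initial scale of the iteration.

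First I would remove the constant part of the thin datum: subtracting the explicit solution $\widetilde W(x,y):=-c_sf(0)\,y^{2s}$, which has $\widetilde W(x,0)=0$ and is bounded in $L^2(y^a)$ by $|f(0)|$, we may assume that the new thin Neumann datum $g(x):=f(x)-f(0)$ vanishes at $0$ while $\|g\|_{C^{0,\alpha}}\leq C$. In case $(2)$ I would moreover subtract, at each scale, the current affine polynomial $b\cdot x$; since $\dive(y^aB(0)(b,0))=0$, this only produces a divergence-form bulk term $\dive(y^aF)$ with $F=-(B(x)-B(0))(b,0)$ and $\|F\|_{L^\infty(B_r)}\lesssim[A]_{C^{0,\alpha+2s-1}}\,r^{\alpha+2s-1}|b|$ --- this is exactly where the H\"older hypothesis on $A$ enters in case $(2)$, whereas in case $(1)$ only $\|B(r\,\cdot\,)-B(0)\|_{L^\infty(B_1)}\to0$ is used, so continuity of $A$ is enough.

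The heart of the matter is an approximation lemma: there is $\delta>0$ such that, if $W$ satisfies $\dive(y^aB(x)\nabla W)=\dive(y^aF)$ in the half-ball $B_1^+$ with $B$ of the form \eqref{B}, $\|A-A(0)\|_{L^\infty(B_1)}+\|F\|_{L^\infty(B_1)}\leq\delta$, thin Neumann datum of $C^{0,\alpha}$-norm $\leq\delta$ vanishing at $0$, and $\int_{B_1^+}y^a|W|^2\leq1$, then there is a solution $V$ of the homogeneous constant-coefficient problem $\dive(y^aB(0)\nabla V)=0$ in $B_{1/2}^+$ with zero thin Neumann datum such that $\int_{B_{1/2}^+}y^a|W-V|^2\leq\varepsilon^2$. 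This is proved by contradiction: the Caccioppoli estimate for the degenerate equation with inhomogeneous thin datum gives a uniform bound for $W_j$ in $H^1(y^a,B_{3/4}^+)$; since $y^a$ is an $A_2$ weight, the Fabes--Kenig--Serapioni compact embedding produces a subsequence converging strongly in $L^2(y^a)$, and passing to the limit in the weak formulation (thin term included) yields $V$. After the linear change of variables in $x$ that normalizes $B(0)$, $V$ is the Caffarelli--Silvestre extension of a function $v$ with $(-\Delta)^sv=0$ in a ball; interior regularity for the fractional Laplacian (equivalently, even reflection in $y$ and the $C^\infty$ theory for $\dive(|y|^a\nabla\,\cdot\,)=0$) gives $v=V(\cdot,0)\in C^\infty$, so $V(\cdot,0)$ is approximated on $B_r$, $r<1/2$, by its Taylor polynomial of degree $0$ in case $(1)$ and degree $1$ in case $(2)$ with error $O(r^2)$, in particular $o(r^{\alpha+2s})$.

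Finally I would iterate: choosing $\varepsilon$ small against the decay factor $\rho^{\alpha+2s}$, then $\delta$, then $\rho$ and a starting scale beyond which $\|B(\rho^k\,\cdot\,)-B(0)\|_{L^\infty(B_1)}\leq\delta$ (the finitely many coarser scales being absorbed into the constant), one builds polynomials $P_k$ of degree $0$, resp.\ $1$, with geometrically decaying coefficients so that $\rho^{-k(\alpha+2s)}\big(U(\rho^k\,\cdot\,)-P_k(\rho^k\,\cdot\,)\big)$ stays in the class to which the approximation lemma applies, whence $\big(\tfrac{1}{|B_r|}\int_{B_r}|u-P_{0,r}|^2\big)^{1/2}\lesssim r^{\alpha+2s}$ for all small interior balls $B_r$. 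Running this at every interior point gives the Campanato, resp.\ gradient-Campanato, characterization of $C^{0,\alpha+2s}$, resp.\ $C^{1,\alpha+2s-1}$, and keeping track of the constants produces the stated bound. I expect the main obstacle to be the approximation lemma itself: proving the Caccioppoli estimate for the degenerate extension equation with an inhomogeneous thin Neumann datum and a divergence-form right-hand side at the correct scaling, and establishing the regularity up to $\{y=0\}$ of the limiting constant-coefficient solution; once these are in hand the iteration is routine.
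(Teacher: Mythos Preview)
Your proposal is correct and follows essentially the same route as the paper: extension problem, Caccioppoli estimate, compactness/approximation lemma against the constant-coefficient problem, smoothness of the limiting solution via even reflection in $y$, and a geometric iteration with degree-$0$ (resp.\ degree-$1$) polynomials. One technical point you gloss over with ``whence'' is the passage from the weighted bulk decay $\int_{B_r^*}y^a|U-P|^2\,dX$ to the trace decay $\int_{B_r}|u-P|^2\,dx$: the paper handles this by carrying \emph{both} quantities through the iteration and using a scaled trace inequality $r^{1-s}\|U(\cdot,0)\|_{L^2(B_r)}\leq C\|U\|_{H^1(B_r^*,y^a\,dX)}$ together with Caccioppoli at each step, which in turn requires the trace bound as input (the thin term $\int|U(\cdot,0)||f|$ appears in Caccioppoli), so the two norms must be propagated simultaneously rather than recovered at the end.
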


\begin{thm}[Interior regularity for $f$ in $L^p$]\label{thm:interior Lp}
Assume that $\Omega$ is a bounded Lipschitz domain and that $f\in L^p(\Omega)$,
for some $1<p<\infty$. Let $u$ be a solution to \eqref{uma} or \eqref{uman}.
 \begin{enumerate}[$(1)$]
 \item Suppose that $n/(2s)<p<n/(2s-1)^+$
 and that $A(x)$ is continuous in $\Omega$.  Then $u\in C^{0,\alpha}({\Omega})$,
 for $\alpha=2s-n/p\in(0,1)$, and
$$[u]_{C^{0,\alpha}(\Omega)}\leq C\big(\|u\|_{L^2(\Omega)}+[u]_{H^s(\Omega)}
+\|f\|_{L^p(\Omega)}\big).$$
\item Suppose that $s>1/2$, $p>n/(2s-1)$ and that $A(x)$ is in $C^{0,\alpha}(\Omega)$,
for $\alpha=2s-n/p-1\in(0,1)$. Then $u\in C^{1,\alpha}(\Omega)$ and
$$[u]_{C^{1,\alpha}(\Omega)}\leq C\big(\|u\|_{L^2(\Omega)}+[u]_{H^s(\Omega)}
+\|f\|_{L^p(\Omega)}\big).$$
 \end{enumerate}
The constants $C$ above depend only on ellipticity, $n$, $\Omega$, $\alpha$, $s$
and the modulus of continuity of $A(x)$.
\end{thm}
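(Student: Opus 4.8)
The plan is to run the compactness-and-approximation scheme that proves Theorem~\ref{thm:interior Calpha}, this time tracking how the $L^p$ norm of $f$ propagates through the natural scaling of $L^s$ and replacing the H\"older bound on the datum by De Giorgi--Nash--Moser estimates for $A_2$-degenerate equations to produce the initial modulus of continuity. Fix an interior point $x_0$ with $B_{2r_0}(x_0)\subset\subset\Omega$ and let $U$ be the extension of $u$, so that $\dive(y^aB(x)\nabla U)=0$ in $\Omega\times(0,\infty)$ and $-\frac1{2s}\lim_{y\to0^+}y^aU_y=c_sf$ on $\Omega$ (Theorem~\ref{thm:extension}). Interior $C^{0,\alpha}$, resp. $C^{1,\alpha}$, regularity of $u$ at $x_0$ is equivalent to the corresponding regularity of $U$ up to the flat boundary $\{y=0\}$ near $(x_0,0)$, so we work in half-cylinders $\mathcal{C}_r:=B_r(x_0)\times(0,r)$, $r\le r_0$, where $U$ solves a \emph{local} degenerate elliptic problem with conormal datum $c_sf$ on the bottom. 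The Caccioppoli estimates of the paper, combined with the weighted Sobolev trace inequality for $y^a\,dx\,dy$, bound the weighted Dirichlet energy of $U$ on $\mathcal{C}_{1/2}$ by $\|U\|_{L^2(\mathcal{C}_1,\,y^a)}+\|f\|_{L^p(B_1)}$, the hypothesis $p>n/(2s)$ being exactly what makes the boundary term $\int_{B_1}f\varphi^2U$ absorbable with a gain; by the extension/trace theory these quantities are in turn controlled by $\|u\|_{L^2(\Omega)}+[u]_{H^s(\Omega)}+\|f\|_{L^p(\Omega)}$, which supplies the right-hand sides in the two estimates. Reflecting $U$ evenly across $\{y=0\}$, absorbing the conormal datum as an inhomogeneity, and applying the De Giorgi--Nash--Moser theory for $A_2$-weighted equations (Fabes--Kenig--Serapioni) gives $U\in L^\infty_{\mathrm{loc}}\cap C^{0,\gamma}_{\mathrm{loc}}$ up to $\{y=0\}$ for a small universal $\gamma>0$; the restriction $p<n/(2s-1)^+$ in case $(1)$, resp. $s>1/2$ in case $(2)$, keeps the target exponent in $(0,1)$ so that a single comparison with a constant, resp. affine, function closes the iteration.

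The core step is an approximation lemma: given $\varepsilon>0$ there is $\eta>0$ such that if $\|U\|_{L^2(\mathcal{C}_1,\,y^a)}\le1$, $\osc_{B_1}A\le\eta$ and $\|f\|_{L^p(B_1)}\le\eta$, then $\|U-V\|_{L^\infty(\mathcal{C}_{1/2})}\le\varepsilon$, where $V$ solves the constant-coefficient homogeneous problem $\dive(y^aB_0\nabla V)=0$, $\lim_{y\to0^+}y^aV_y=0$, with $B_0$ built from a frozen matrix $A_0$. This is proved by contradiction: a sequence of would-be counterexamples is precompact by the energy and De Giorgi bounds above (which are stable as $A\to A_0$), its limit solves the frozen homogeneous problem because $\|f_k\|_{L^p}\to0$ forces the limiting conormal derivative to vanish in the weak sense, and this contradicts the assumed separation. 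After the linear change of variables diagonalizing $A_0$, $V$ is the Caffarelli--Silvestre extension of an $s$-harmonic function for a constant-coefficient fractional Laplacian, so $v:=V(\cdot,0)$ is smooth with universal control on $\bigl|v(x)-v(x_0)-\nabla v(x_0)\cdot(x-x_0)\bigr|$ over $\mathcal{C}_{1/4}$.

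For the iteration, rescaling $u$ about $x_0$ by $r$ sends $f$ to $f_r(x)=r^{2s}f(x_0+rx)$, with $\|f_r\|_{L^p(B_1)}=r^{2s-n/p}\|f\|_{L^p(B_r(x_0))}$, that is, $r^{\alpha}\|f\|_{L^p(B_r(x_0))}$ with $\alpha=2s-n/p$ in case $(1)$ and $r^{1+\alpha}\|f\|_{L^p(B_r(x_0))}$ with $\alpha=2s-n/p-1$ in case $(2)$; moreover $\osc_{B_r}A\to0$ as $r\to0$ by continuity of $A$, resp. $\osc_{B_r}A\le[A]_{C^{0,\alpha}}r^{\alpha}$. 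Feeding this into the approximation lemma at the dyadic scales $r_k=\rho^k$ and subtracting the comparison function at each stage produces constants $c_k$ (case $(1)$), resp. affine maps $\ell_k$ (case $(2)$), with $\|u-\ell_k\|_{L^\infty(B_{r_k}(x_0))}\le Cr_k^{\alpha}$, resp. $\le Cr_k^{1+\alpha}$, and $|\ell_{k+1}-\ell_k|$ summable geometrically. This gives the pointwise $C^{0,\alpha}$, resp. $C^{1,\alpha}$, expansion at $x_0$, with constants uniform over $x_0$ in compact subsets of $\Omega$; a covering argument then yields the stated seminorm bounds, the right-hand side coming from the energy estimate of the first paragraph.

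The main obstacle is the approximation lemma: one must produce enough uniform compactness for the rescaled, coefficient-frozen extensions and pass to the limit both in the degenerate weighted equation and in the nonlocal/conormal boundary condition, verifying that $\|f_k\|_{L^p}\to0$ genuinely kills the limiting Neumann datum. A secondary but essential bookkeeping point is to check that the integrability thresholds ($p>n/(2s)$ in case $(1)$; $s>1/2$ and $p>n/(2s-1)$ in case $(2)$) are exactly those for which the boundary term is controlled with the gain $\alpha$ forced by scaling, while the upper restrictions on $p$, resp. $s$, keep $\alpha\in(0,1)$ so the single-step comparison suffices.
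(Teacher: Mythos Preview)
Your approach is correct in outline and would work, but it takes a genuinely different route from the paper. The paper never leaves the $L^2$ framework: it first observes that $f\in L^p$ with $p>n/(2s)$ (resp.\ $p>n/(2s-1)$) implies, via H\"older's inequality, the Morrey-type condition
\[
\sup_{0<r\le1}\frac{1}{r^{n+2(-2s+\alpha)}}\int_{B_r}|f|^2\,dx<\infty
\qquad\Bigl(\text{resp.}\ \sup_{0<r\le1}\frac{1}{r^{n+2(-2s+\alpha+1)}}\int_{B_r}|f|^2\,dx<\infty\Bigr),
\]
and then reruns \emph{verbatim} the $L^2$ iteration of Theorem~\ref{thm:interior Calpha} with the exponent $\alpha$ replaced by $-2s+\alpha$ (resp.\ $-2s+\alpha+1$). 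The approximation lemma (Corollary~\ref{Cor:approximation extension}) gives closeness to the harmonic comparison $W$ only in $L^2(y^a\,dX)$; the trace inequality (Lemma~\ref{lem:inequality}) and Caccioppoli then transfer this to an $L^2$ bound on $u-c_k$ (or $u-\ell_k$) over $B_{\lambda^k}$, and the H\"older conclusion comes from Campanato's characterization. No De Giorgi--Nash--Moser input is needed, and no $L^\infty$ bound on $U$ is ever established or used.

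Your plan instead upgrades the approximation lemma to an $L^\infty$ statement and runs the iteration in $L^\infty$, which is closer in spirit to the original compactness method of \cite{Caffarelli}. This is fine, but it costs you an extra ingredient: to get precompactness in $C^0$ you need uniform $C^{0,\gamma}$ estimates for the \emph{inhomogeneous} degenerate problem (nonzero conormal datum $f$), and the Fabes--Kenig--Serapioni theory you invoke is stated for the homogeneous equation. After even reflection the datum becomes a singular measure on $\{y=0\}$, so ``absorbing the conormal datum as an inhomogeneity'' requires either an extension of the weighted De Giorgi theory to such right-hand sides or a separate argument that $f\in L^p$, $p>n/(2s)$, places this measure in the correct dual space for Moser iteration. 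This is doable and standard, but it is genuine additional work that the paper's $L^2$ route sidesteps entirely. In short: both arguments are valid; the paper's is more self-contained, while yours is more direct once the $L^\infty$ machinery is in hand.
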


The results above should be compared with the classical regularity estimates for the fractional
Laplacian and with the classical Schauder estimates for divergence form elliptic operators.
If $(-\Delta)^su=f$ in $\R^n$ and $f\in C^\alpha$ then $u\in C^{\alpha+2s}$.
On the other hand, if $Lu=f$ and the coefficients $A(x)$ and the right hand side $f$ are in $C^\alpha$,
then $u\in C^{1,\alpha}$ in the interior. If the coefficients are just continuous and $f$ is in $L^p$,
for some $n/2<p<n$, then $u\in C^{2-n/p}$, while if $p>n$ and the coefficients are H\"older
continuous with exponent $\alpha=2-n/p-1$ then $u\in C^{1,\alpha}$ in the interior.
See Proposition \ref{Prop:Silvestre1} and \cite{Gilbarg-Trudinger, Han-Lin, Silvestre CPAM, Stein}.

Notice that in Theorems 
\ref{thm:interior Calpha} and \ref{thm:interior Lp} we require the coefficients to be continuous
in part (1) and H\"older continuous in part (2).
The idea behind these results
is to compare the solution $u$ with the solution of the equation with frozen coefficients.
In (1) we notice that $u-c$ is still a solution in the interior for any constant $c$,
so the regularity basically comes from the right hand side as in the case of
the fractional Laplacian. For part (2),
if $\ell$ is a linear function then $u-\ell$ is not a solution of the same equation. Then
H\"older regularity in the coefficients is needed in order to gain a decay in
the oscillation of the remainder error in the right hand side.

Next we establish the boundary regularity in the case of Dirichlet boundary condition.

\begin{thm}[Boundary regularity for $f$ in $C^\alpha$ -- Dirichlet]\label{thm:boundary Calpha}
Assume that $\Omega$ is a bounded domain and that $f\in C^{0,\alpha}(\overline{\Omega})$,
for some $0<\alpha<1$. Let $u$ be a solution to \eqref{uma}.
 \begin{enumerate}[$(1)$]
  \item Suppose that $0<\alpha+2s<1$, $\Omega$ is a $C^1$
  domain and that $A(x)$ is continuous in $\overline{\Omega}$. Then
  $$u(x)\sim \dist(x,\partial\Omega)^{2s}+v(x),\quad\hbox{for}~x~\hbox{close to}~\partial\Omega,$$
  where $v\in C^{0,\alpha+2s}(\overline{\Omega})$. Moreover, 
  $$[v]_{C^{0,\alpha+2s}(\overline{\Omega})}\leq C\big(1+\|u\|_{L^2(\Omega)}+[u]_{H^s(\Omega)}
  +\|f\|_{C^{0,\alpha}(\overline{\Omega})}\big).$$
   \item Suppose that $s\geq1/2$, $1<\alpha+2s<2$, $\Omega$ is a $C^{1,\alpha+2s-1}$ domain
   and that $A(x)$ is in $C^{0,\alpha+2s-1}(\overline{\Omega})$.
   If $s>1/2$ then
   $$u(x)\sim\dist(x,\partial\Omega)+v(x),\quad\hbox{for}~x~\hbox{close to}~\partial\Omega,$$
   where $v\in C^{1,\alpha+2s-1}(\overline{\Omega})$, with
   $$[v]_{C^{1,\alpha+2s-1}(\overline{\Omega})}\leq C\big(1+\|u\|_{L^2(\Omega)}+[u]_{H^s(\Omega)}
  +\|f\|_{C^{0,\alpha}(\overline{\Omega})}\big).$$
  If $s=1/2$ then
   $$u(x)\sim\dist(x,\partial\Omega)|\ln\dist(x,\partial\Omega)|+w(x),\quad\hbox{for}~x~\hbox{close to}~\partial\Omega,$$
   where $w\in C^{1,\alpha}(\overline{\Omega})$, with
   $$[w]_{C^{1,\alpha}(\overline{\Omega})}\leq C\big(1+\|u\|_{L^2(\Omega)}+[u]_{H^{1/2}(\Omega)}
  +\|f\|_{C^{0,\alpha}(\overline{\Omega})}\big).$$

 \end{enumerate}
 In both cases, if $f(x_0)=0$ for some $x_0\in\partial\Omega$, then $u(x_0)=v(x_0)$
 (resp. $u(x_0)=w(x_0)$)
 and $u$ has the same regularity as $v$ (resp. $w$) at $x_0\in\partial\Omega$.
 The constants $C$ above depend only on ellipticity, $n$, $\Omega$, $\alpha$, $s$ and
 the modulus of continuity of $A(x)$.
\end{thm}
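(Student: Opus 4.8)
The plan is to transfer the equation to the degenerate elliptic extension problem of Theorem~\ref{thm:extension} and then combine a Caccioppoli estimate that holds up to the boundary with the compactness and approximation scheme of \cite{Caffarelli}, exactly as in the proof of the interior estimates of Theorem~\ref{thm:interior Calpha}; the new feature is an explicit boundary layer carrying the singular profile $\dist(x,\partial\Omega)^{2s}$. Fix $x_0\in\partial\Omega$. Since $\partial\Omega$ is $C^1$ in case~(1) and $C^{1,\alpha+2s-1}$ in case~(2), a diffeomorphism of the corresponding class straightens $\partial\Omega$ near $x_0$ to a piece of $\{x_n=0\}$, maps $\Omega$ to the upper side, and pushes $L$ forward to a divergence form operator whose coefficients remain continuous in case~(1) and $C^{0,\alpha+2s-1}$ in case~(2); the right hand side stays in the same H\"older class and the condition $u=0$ on the boundary is preserved. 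Letting $U=U(x,y)$ be the corresponding solution of \eqref{dos estrellitas}, it suffices to study the trace $U(\cdot,0)$ of a solution of $\dive(y^aB(x)\nabla U)=0$ in $B_1^+\times(0,\infty)$, with $B_1^+:=B_1\cap\{x_n>0\}$, vanishing on $(B_1\cap\{x_n=0\})\times[0,\infty)$ and with $-\tfrac1{2s}\lim_{y\to0^+}y^aU_y=c_sf$ on $B_1^+$. Away from the edge $E:=(B_1\cap\{x_n=0\})\times\{0\}$ the interior estimates already give the conclusion, so the whole question is a boundary regularity statement at $E$.

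\emph{The model boundary layer.} In the quarter space $\{x_n>0\}\times\{y>0\}$ take the solution $P$ of $\dive(y^a\nabla P)=0$ that vanishes on $\{x_n=0\}$ and satisfies $-\tfrac1{2s}\lim_{y\to0^+}y^aP_y\equiv c_s$ on $\{y=0\}$. Matching a constant Neumann datum forces $P$ to be homogeneous of degree $2s$ in $(x_n,y)$, so its trace equals a constant multiple of $x_n^{2s}$, which after undoing the flattening is the profile $\dist(x,\partial\Omega)^{2s}$. For $2s<1$ this profile lies only in $C^{0,2s}$, which is exactly why it cannot be absorbed into $v$ in case~(1); for $2s\ge1$, the regime of case~(2) and the reason for the hypothesis $s\ge1/2$, it is at least Lipschitz, the leading boundary rate of $u$ is $\dist(x,\partial\Omega)$, and the finer part is included in $v$. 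Subtracting $\kappa P$ from $U$, with $\kappa$ a bounded multiple of the transported value of $f$ at $x_0$, we may assume the right hand side vanishes at $x_0$; in particular $\kappa=0$ when $f(x_0)=0$, which gives the final assertion.

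\emph{Caccioppoli at the edge and the iteration.} The technical core is a weighted Caccioppoli inequality for $\dive(y^aB(x)\nabla U)=0$ in half-balls of $\R^{n+1}$ centred on $E$, bounding the $y^a$-weighted energy of $U$, minus its current boundary-layer and polynomial corrections, on a half-ball by the $y^a$-weighted $L^2$ norm on a concentric larger one plus the contribution of $f$; this is the analogue across the Dirichlet face of the interior estimate, and it is here that the term $\|u\|_{L^2(\Omega)}+[u]_{H^s(\Omega)}$ enters, through the equivalence between the $H^s$ norm of the trace and the weighted energy of the extension. With this at hand one runs the dyadic scheme at scales $r=2^{-k}$: after subtracting the current correction, a multiple of $\dist(x,\partial\Omega)^{2s}$ in case~(1) and a multiple of $\dist(x,\partial\Omega)$ together with an affine term in case~(2), one compares $U$ with the solution $V$ of the homogeneous extension problem with coefficients frozen at $x_0$ on the half-space; compactness plus the boundary regularity of this model problem, obtained by even reflection across $\{x_n=0\}$ and the smoothness of $a$-harmonic functions with $A_2$ weight as in \cite{Caffarelli-Silvestre}, shows that $V(\cdot,0)$, modulo the profile, is $C^{0,\alpha+2s}$ in case~(1) and $C^{1,\alpha+2s-1}$ in case~(2), which yields an excess-decay estimate at the next scale; the errors from freezing $B(x)$ and from the flattening carry the factors $r^\alpha$ supplied by the continuity, respectively H\"older continuity, of $A$ and by the regularity of $\partial\Omega$, so the inequality closes. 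Summing the geometric series gives, for $x$ near $x_0$, the decomposition $u\sim\kappa(x_0)\,\dist(x,\partial\Omega)^{\min\{2s,1\}}+v(x)$ with $v$ in the asserted H\"older space and with the stated bound, the $1+$ in the estimate coming from the normalization of $P$. The main difficulty is concentrated precisely at the edge $E$, where the Dirichlet face $\{x_n=0\}$ meets the weighted face $\{y=0\}$: proving the up-to-the-boundary weighted Caccioppoli estimate there, and controlling the model profile $P$ and the boundary regularity of the frozen homogeneous extension problem sharply enough that the dyadic correction can be peeled off at every scale. Once these are available, the compactness and approximation argument is identical to the interior one.
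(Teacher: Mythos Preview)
Your overall strategy coincides with the paper's: pass to the extension, flatten the boundary, subtract a barrier that carries the constant part $f(x_0)$ of the Neumann datum, and then run the Caccioppoli--compactness--approximation iteration on half balls, exactly as in the interior proof but with the constant term of the Campanato expansion pinned to zero by the Dirichlet condition. The paper builds the barrier from the explicit one--dimensional half--line solutions of Section~\ref{section:5} (constant right hand side for $s<1/2$, a cutoff $\chi_{[0,1]}$ for $s\ge 1/2$), while you use a homogeneous degree--$2s$ quarter--space solution $P$; both produce a new datum $f-f(x_0)$ vanishing at $x_0$ and an $(I-B(x))\nabla P$--type error, so the schemes are equivalent. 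Your remark that homogeneity is ``forced'' by a constant Neumann datum is not quite right---there are many solutions and one \emph{chooses} the homogeneous one---but this is harmless.

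There is one genuine slip. To obtain the up--to--the--boundary regularity of the approximating model solution $\mathcal W$ (the limit in the compactness step), which satisfies
\[
\dive(y^{a}\nabla\mathcal W)=0\ \text{in}\ (B_{3/4}^+)^*,\quad -y^a\mathcal W_y\big|_{y=0}=0\ \text{on}\ B_{3/4}^+,\quad \mathcal W=0\ \text{on}\ B_{3/4}\cap\{x_n=0\},
\]
you write ``even reflection across $\{x_n=0\}$''. For a Dirichlet face this must be the \emph{odd} reflection in $x_n$ (Remark~\ref{remark2}): even reflection does not give a weak solution of the equation in the full cylinder and would not yield the needed interior estimates at the edge $E$. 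The even reflection is the correct device only in the $y$--variable (to pass from the half--space $\{y>0\}$ to the full space using the zero Neumann datum), or in the Neumann boundary problem of Section~\ref{section:7}. Once you replace ``even'' by ``odd'' your outline matches the paper's proof.
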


The result above should be compared with the boundary regularity estimates for the fractional
Dirichlet Laplacian $(-\Delta_D^+)^s$ in the half space $\R^n_+$ contained in
Theorem \ref{Thm:Laplacian half space}. Observe that here an odd reflection can be performed
to compare with the global problem.

For the case of Neumann boundary condition the global regularity is 
the same as the interior regularity.

\begin{thm}[Global regularity for $f$ in $C^\alpha$ -- Neumann]\label{thm:global Calpha Neumann}
Assume that $\Omega$ is a bounded domain and that $f\in C^{0,\alpha}(\overline{\Omega})$,
for some $0<\alpha<1$. Let $u$ be a solution to \eqref{uman}.
 \begin{enumerate}[$(1)$]
  \item Suppose that $0<\alpha+2s<1$, $\Omega$ is a $C^1$
  domain and that $A(x)$ is continuous in $\overline{\Omega}$. Then
  $u\in C^{0,\alpha+2s}(\overline{\Omega})$ and 
  $$[u]_{C^{0,\alpha+2s}(\overline{\Omega})}\leq C\big(\|u\|_{L^2(\Omega)}+[u]_{H^s(\Omega)}
  +\|f\|_{C^{0,\alpha}(\overline{\Omega})}\big).$$
   \item Suppose that $s\geq1/2$, $1<\alpha+2s<2$, $\Omega$ is a $C^{1,\alpha+2s-1}$ domain
   and that $A(x)$ is in $C^{0,\alpha+2s-1}(\overline{\Omega})$.
   Then $u\in C^{1,\alpha+2s-1}(\overline{\Omega})$ and
   $$[u]_{C^{1,\alpha+2s-1}(\overline{\Omega})}\leq C\big(\|u\|_{L^2(\Omega)}+[u]_{H^s(\Omega)}
  +\|f\|_{C^{0,\alpha}(\overline{\Omega})}\big).$$
 \end{enumerate}
 The constants $C$ above depend only on ellipticity, $n$, $\Omega$, $\alpha$, $s$ and
 the modulus of continuity of $A(x)$.
\end{thm}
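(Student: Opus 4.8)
The plan is to reduce the global estimate to the interior estimate of Theorem \ref{thm:interior Calpha} by flattening the boundary and reflecting, using the extension problem as the main device. Recall that the solution $u$ of \eqref{uman} is the trace $U(\cdot,0)$ of the solution $U$ of the Neumann extension problem, i.e. $\dive(y^aB(x)\nabla U)=0$ in $\Omega\times(0,\infty)$, $\partial_AU(x,y)=0$ on $\partial\Omega\times[0,\infty)$, $U(x,0)=u(x)$, and $-\tfrac{1}{2s}\lim_{y\to0^+}y^aU_y(x,y)=c_sL_N^su(x)=c_sf(x)$, the last identity understood in the weak form coming from \eqref{puntual Neumann}. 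Since $e^{-tL_N}1\equiv1$, the representation of $L_N^s$ carries no zeroth-order term (no $B_s(x)$), and this is precisely why the boundary behaviour here is regular rather than of the singular type $\dist(x,\partial\Omega)^{2s}$ occurring in the Dirichlet case of Theorem \ref{thm:boundary Calpha}.

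First I would localize near a boundary point $x_0\in\partial\Omega$ and straighten the boundary. Choose a diffeomorphism $\Phi$, of class $C^1$ in case $(1)$ and of class $C^{1,\alpha+2s-1}$ in case $(2)$ (this is where the smoothness assumption on $\partial\Omega$ enters), taking a half-ball $B_1^+$ onto $\Omega\cap B_r(x_0)$ and $\{x_n=0\}$ onto $\partial\Omega\cap B_r(x_0)$, and arranged so that along $\{x_n=0\}$ the conormal direction for the transported coefficients is $e_n$, so that the lateral Neumann condition becomes $\partial_{x_n}(U\circ\Phi)=0$ on $\{x_n=0\}\times[0,\infty)$. Since $\Phi$ touches only the $x$-variables, the product structure $\Omega\times(0,\infty)$ and the block structure of $B$ in \eqref{B} are preserved: $\widetilde U:=U\circ\Phi$ solves a degenerate equation $\dive(y^a\widetilde B(x)\nabla\widetilde U)=0$ in $B_1^+\times(0,\infty)$ with $\widetilde B=\operatorname{diag}(\widetilde A(x),\gamma(x))$, $y^a$ still an $A_2$ weight, $\widetilde A$ uniformly elliptic and in the same regularity class as $A$ (continuous in case $(1)$, $C^{0,\alpha+2s-1}$ in case $(2)$, since composition with $\Phi$ and multiplication by its Jacobian preserve these classes), and with right-hand side $\widetilde f=(f\circ\Phi)\,J$ still in $C^{0,\alpha}$.

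Next I would reflect evenly across $\{x_n=0\}$. Set $\overline U(x',x_n,y):=\widetilde U(x',|x_n|,y)$ and extend $\widetilde A,\gamma,\widetilde f$ by even reflection in $x_n$ (the few entries of $\widetilde A$ that must be reflected oddly vanish on $\{x_n=0\}$ by the choice of $\Phi$, hence stay continuous, resp. $C^{0,\alpha+2s-1}$). Using that the pure Neumann term $\partial_{x_n}\widetilde U|_{x_n=0}=0$, one checks that $\overline U$ is a genuine weak solution of $\dive(y^a\overline B(x)\nabla\overline U)=0$ in $B_1\times(0,\infty)$ with $-\tfrac1{2s}\lim_{y\to0^+}y^a\overline U_y=c_s\overline f$ and no distributional mass on the interface; even reflection preserves uniform ellipticity, continuity/H\"older continuity of the coefficients, and the $C^{0,\alpha}$ norm of $\overline f$. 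Thus $\overline U$ is the extension of a function $\overline u=\overline U(\cdot,0)$ solving an equation of the type in Theorem \ref{thm:interior Calpha} on the ball $B_1$, with $x_0$ now interior. Running the interior argument (Caccioppoli estimate plus the compactness/approximation scheme) of Theorem \ref{thm:interior Calpha} at $x_0$ gives $\overline u\in C^{0,\alpha+2s}$ near $x_0$ in case $(1)$ and $\overline u\in C^{1,\alpha+2s-1}$ near $x_0$ in case $(2)$, with the corresponding seminorm controlled by $\|\overline u\|_{L^2}+[\overline u]_{H^s}+\|\overline f\|_{C^{0,\alpha}}$; undoing $\Phi$, whose regularity is exactly matched to the target class, transfers this to $u$ near $x_0\in\partial\Omega$. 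Finally, covering $\overline\Omega$ by finitely many such boundary neighbourhoods together with interior balls, using Theorem \ref{thm:interior Calpha} on the latter, and bounding all the local energies of $U$ by $\|u\|_{L^2(\Omega)}+[u]_{H^s(\Omega)}+\|f\|_{C^{0,\alpha}(\overline\Omega)}$ via the boundary and interior Caccioppoli estimates yields the global bound.

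The main obstacle will be the reflection step: one must verify that the even extension really is a weak solution of the weighted equation across $\{x_n=0\}$ — that the conormal flux matches and no interface term is created — and, more delicately, that $\Phi$ can be chosen to straighten $\partial\Omega$ and turn $\partial_A$ into $\partial_{x_n}$ simultaneously while keeping the transported coefficients in $C^{0,\alpha+2s-1}$ in case $(2)$; this is exactly what forces the domain to be $C^{1,\alpha+2s-1}$ there. As in Theorem \ref{thm:boundary Calpha}, the restriction $s\geq1/2$ in part $(2)$ is inherited from the need to carry out the first-order polynomial comparison in the compactness argument at the reflected point. Everything else is a routine adaptation of the proof of Theorem \ref{thm:interior Calpha}, since after reflection there is no boundary left.
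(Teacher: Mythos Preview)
Your overall plan---flatten, reflect evenly in $x_n$, and invoke the interior argument---is sound and close in spirit to the paper, but the paper organises the reflection differently. It works directly in the half-cylinder $(B_1^+)^*$ with the variable-coefficient conormal condition $A\nabla_xU\cdot e_n=0$ built into the weak formulation (test functions are allowed to be nonzero on $\{x_n=0\}$), runs the Caccioppoli estimate and the compactness/approximation lemma there, and performs the even reflection only on the \emph{limiting harmonic approximant} $\mathcal{W}$. Since the limit has identity coefficients, $\mathcal{W}$ satisfies the pure Neumann condition $\partial_{x_n}\mathcal{W}=0$, and its even reflection is clean (the Neumann analogue of Remark~\ref{remark2}). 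The iteration of Section~\ref{section:6} then proceeds in the half-ball; no reflection of $U$ or of the variable coefficients is ever needed.

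The step you single out as the main obstacle---choosing $\Phi$ so that $\partial_A$ becomes $\partial_{x_n}$ along the flat boundary---is indeed a genuine difficulty in your version. With only continuous $A$ (case~(1)), normalising the tangential components of $A(\cdot,0)e_n$ to zero would require differentiating a merely continuous field, so a $C^1$ diffeomorphism with this property need not exist; in case~(2) the analogous construction loses a derivative and does not obviously produce a $C^{1,\alpha+2s-1}$ map. Fortunately this normalisation is unnecessary for your route as well: after only a linear change making $A(0)=I$ and the standard flattening, reflect $\widetilde U$ evenly and the entries $\widetilde A_{in}$, $i<n$, oddly (the rest evenly). The conormal condition $\widetilde A\nabla_x\widetilde U\cdot e_n=0$ ensures no interface term is created. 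The reflected coefficients are now only bounded measurable---they jump across $\{x_n=0\}$ by $2\widetilde A_{in}(x',0)$---but since $A(0)=I$ this jump is $o(1)$ at the origin, so the $L^{2,0}$ (resp.\ $L^{2,\alpha+2s-1}$) smallness hypotheses of Theorem~\ref{thm:interior L2alpha} hold at $0$. You must therefore invoke that theorem (equivalently, the iteration of Section~\ref{section:4}) rather than Theorem~\ref{thm:interior Calpha} as a black box, which requires continuous coefficients. With that amendment your reduction goes through; the paper's route simply avoids the discontinuous-coefficient detour by reflecting only the constant-coefficient approximant.
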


Again this Theorem should be compared with the boundary regularity estimates for the fractional
Neumann Laplacian $(-\Delta_N^+)^s$ in the half space $\R^n_+$, see 
Theorem \ref{Thm:Laplacian half space Neumann}. In this case even reflections can be used
to relate the problem with the global one.

Finally, for $L^p$ right hand side, both Dirichlet and Neumann cases have the same
regularity up to the boundary.

\begin{thm}[Boundary regularity for $f$ in $L^p$]\label{thm:boundary Lp}
Assume that $\Omega$ is a bounded domain and that $f\in L^p(\Omega)$,
for some $1<p<\infty$. Let $u$ be a solution to \eqref{uma} or \eqref{uman}.
 \begin{enumerate}[$(1)$]
 \item Suppose that $n/(2s)<p<n/(2s-1)^+$, $\Omega$ is a $C^1$ domain
 and that $A(x)$ is continuous in $\overline{\Omega}$.  Then $u\in C^{0,\alpha}(\overline{\Omega})$,
 for $\alpha=2s-n/p\in(0,1)$, and
$$[u]_{C^{0,\alpha}(\overline{\Omega})}\leq C\big(\|u\|_{L^2(\Omega)}+[u]_{H^s(\Omega)}
+\|f\|_{L^p(\Omega)}\big).$$
\item Suppose that $s>1/2$, $p>n/(2s-1)$, $\Omega$ is a $C^{1,\alpha}$ domain
 and that $A(x)$ is in $C^{0,\alpha}(\Omega)$,
for $\alpha=2s-n/p-1\in(0,1)$. Then $u\in C^{1,\alpha}(\overline{\Omega})$ and
$$[u]_{C^{1,\alpha}(\overline{\Omega})}\leq C\big(\|u\|_{L^2(\Omega)}+[u]_{H^s(\Omega)}
+\|f\|_{L^p(\Omega)}\big).$$
 \end{enumerate}
The constants $C$ above depend only on ellipticity, $n$, $\Omega$, $\alpha$, $s$
and the modulus of continuity of $A(x)$.
\end{thm}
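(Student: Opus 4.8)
The plan is to use the extension problem together with the same compactness and approximation scheme that underlies the interior estimates, now carried out near a boundary point. Fix a point $x_0\in\partial\Omega$ and, after flattening the boundary with a $C^1$ (resp. $C^{1,\alpha}$) diffeomorphism, reduce to the model situation of a half-ball $B_1^+=B_1\cap\{x_n>0\}$. In the Dirichlet case we extend $u$ by odd reflection across $\{x_n=0\}$, and in the Neumann case by even reflection; since $\partial_A u=0$ (resp. $u=0$), the reflected function solves a fractional equation of the same type in the full ball $B_1$ with a right hand side still in $L^p$ and with coefficients that remain uniformly elliptic and continuous (resp. $C^{0,\alpha}$) — the diffeomorphism degrades the coefficients by at most the regularity of $\partial\Omega$, which is exactly $C^1$ in part (1) and $C^{1,\alpha}$ in part (2), so nothing is lost. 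After this reduction, the boundary estimate is an interior estimate for the reflected problem, and one can invoke Theorem \ref{thm:interior Lp} directly, or better, re-run its proof.

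Concretely, I would work with the solution $U(x,y)$ of the extension problem \eqref{dos estrellitas} (with the appropriate Dirichlet or Neumann side condition on $\partial\Omega\times[0,\infty)$), which satisfies $\dive(y^aB(x)\nabla U)=0$ in $\Omega\times(0,\infty)$ and $-\frac{1}{2s}\lim_{y\to0^+}y^a U_y(x,y)=c_sL^su(x)=c_sf(x)$ on $\Omega$. The reflection in the $x$ variables commutes with the $y$-direction, so the reflected $U$ solves a degenerate elliptic equation with an $A_2$ weight $|y|^a$ and a Neumann-type flux condition on $\{y=0\}$ given by $f$. The key analytic input is a Caccioppoli estimate for such weighted equations (established earlier in the paper) together with an $L^2$–$L^\infty$ type bound: one controls $\osc$ of $U$ on a half-cylinder $B_r^+\times(0,r)$ by $r^\beta$ times $\|U\|_{L^2(y^a)}$ on a larger cylinder plus a term coming from the $L^p$ norm of $f$. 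The compactness step is standard: if the estimate failed, one would produce a sequence of normalized solutions with $\|f_k\|_{L^p}\to0$ converging to a solution of the homogeneous equation with vanishing flux, which by the Liouville-type classification of global $y^a$-harmonic functions with zero Neumann data is a constant in part (1) (a linear function in part (2)), contradicting the normalization. Here one uses the Sobolev embedding $W^{2s,2}\hookrightarrow L^\infty$ in the relevant range $p>n/(2s)$ to handle the forcing term, exactly as in the fractional Laplacian case recalled in Proposition \ref{Prop:Silvestre1}.

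The main obstacle is controlling the effect of the boundary flattening on the weighted extension equation and verifying that the reflected coefficients still fall within the hypotheses. In part (1) the domain is only $C^1$, so after flattening the coefficients are merely continuous — which is precisely why part (1) only asks for continuity; one must check that the odd/even reflection does not create a jump worse than a modulus of continuity (for the odd Dirichlet reflection the reflected coefficient block has the form $\widetilde A^{in}(x',-x_n)=-A^{in}(x',x_n)$ for $i<n$, which is continuous across $\{x_n=0\}$ precisely because $A^{in}$ vanishes there after the flattening normalization, or can be arranged to be small). In part (2), where $\Omega$ is $C^{1,\alpha}$ and $A\in C^{0,\alpha}$, one additionally needs the linear-function comparison: $u-\ell$ is not a solution of the frozen-coefficient equation, so as in the interior proof one freezes $A$ at the boundary point, estimates the resulting error term in $L^p$ by the $C^{0,\alpha}$ oscillation of the coefficients, and closes the iteration — the restriction $s>1/2$ enters because one needs $\alpha+2s>1$, i.e. genuine $C^{1,\alpha}$ regularity, for the linear term to be meaningful. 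Finally, the distance-function behavior of $u$ at $\partial\Omega$ in the Dirichlet case is absorbed here into the reflected problem, so unlike Theorem \ref{thm:boundary Calpha} the statement is clean: in the $L^p$ range the singular $\dist^{2s}$ part is already $C^{0,\alpha}$ up to the boundary with $\alpha=2s-n/p$, hence does not need to be split off.
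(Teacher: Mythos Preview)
Your overall strategy---flatten the boundary, pass to the extension $U$, use Caccioppoli plus compactness, and iterate---is the paper's strategy too, but you diverge at the reflection step in a way that creates precisely the obstacle you flag and then do not resolve. You propose to reflect $U$ itself (odd for Dirichlet, even for Neumann) across $\{x_n=0\}$ and reduce to the interior theorem. For that you need the reflected $U_o$ to be a weak solution of $\dive(y^a\widetilde B\nabla U_o)=0$ in the full $B_1^*$; as you note, the reflected coefficient $\widetilde A^{in}$ picks up a sign and hence a jump across $\{x_n=0\}$ unless $A^{in}(x',0)\equiv0$, which after normalizing $A(0)=I$ holds only at the single point $x'=0$. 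The averaged smallness condition on $\widetilde A$ at $0$ does survive, so this route can be pushed through, but the weak-solution verification across the interface is a genuine step you have left open.

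The paper sidesteps this entirely: it never reflects $U$. It runs the Caccioppoli inequality and the approximation lemma directly on the half-cylinder $(B_1^+)^*$ (Remark~\ref{remark1}), producing a harmonic approximation $\mathcal W$ that satisfies \eqref{label} and hence has \emph{constant} coefficients. It is only $\mathcal W$ that gets reflected (Remark~\ref{remark2}), and for the Laplacian that is trivial. The iteration then proceeds on half-balls exactly as in the interior proof of Theorem~\ref{thm:interior Lmenos2alpha}, with the sole new feature that $\mathcal W(0,0)=0$ forces $c=0$ in part~(1) and forces the approximating linear function to have zero constant term in part~(2); this is what keeps the rescaled solutions vanishing on $\{x_n=0\}$. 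The passage from $f\in L^p$ to the Morrey condition $[f]_{L^{2,-2s+\alpha}_{\partial\Omega}(0)}$ is just H\"older's inequality, as in the interior case. Finally, the compactness the paper uses is the local approximation of Corollary~\ref{Cor:approximation extension} followed by a Campanato iteration (Lemmas~\ref{bola fija}--\ref{lem:claim}), not the blow-up/Liouville contradiction you sketch.
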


We recall that if $Lu=f$ and the coefficients
 $A(x)$ and the right hand side $f$ are in $C^\alpha$ up to the boundary of $\Omega$
then $u\in C^{1,\alpha}$ up to the boundary.
If the coefficients are just continuous up to the boundary and $f$ is in $L^p$
for some $n/2<p<n$ then $u$ is globally in $C^{2-n/p}$, while if $p>n$ and the coefficients are H\"older
continuous up to the boundary with exponent $\alpha=2-n/p-1$ then $u\in C^{1,\alpha}$ up to the boundary.
See \cite{Gilbarg-Trudinger, Han-Lin}.

Some bounds at the boundary for the fractional Dirichlet Laplacian
when $f$ is just bounded and $\Omega$ is smooth where obtained in \cite{CDDS}.
For the particular case $s=1/2$ in a smooth domain with a right hand side vanishing 
at the boundary see also \cite{Cabre-Tan}.
The regularity estimates for the Neumann case generalize the results for the
fractional Neumann Laplacian $(-\Delta_N)^{1/2}$ obtained in \cite[Theorem~3.5]{Stinga-Volzone}.
When the coefficients $A(x)$ and the domain $\Omega$ are smooth, the $L^p$--domain and regularity
of fractional powers of strongly elliptic operators was considered in \cite{Seeley1,Seeley2},
see also the recent preprint \cite{Grubb pre}\footnote{We are grateful to Gerd Grubb for several interesting comments about the smooth case.}. 
For the fractional Laplacian on $\R^n$ we know that
 the unique solution $u\in H^s(\R^n)$
of the Dirichlet problem
$$\begin{cases}
(-\Delta)^su=1,&\hbox{in}~B_1,\\
u=0,&\hbox{in}~\R^n\setminus B_1,
\end{cases}$$
is given by $u(x)=c_{n,s}(1-|x|^2)_+^s$, see \cite{Getoor}. Thus in general $u$ is globally
in $C^s$ but not in any $C^\alpha$ for $\alpha>s$, see also \cite{Ros-Oton-Serra}.
For this case the boundary regularity in fractional Sobolev spaces and
in H\"older spaces on smooth domains was studied in \cite{Grubb}.

Throughout the paper we will mainly focus on the case of the Dirichlet boundary condition.
We explain only in Section \ref{section:7} how the case of the Neumann condition works by
pointing out the main differences with the Dirichlet case.
In Section \ref{section:2} we define in a precise way the
fractional operator $L^s$. By using the heat semigroup $e^{-tL}$ and \eqref{abc}
we obtain the integro-differential
formula \eqref{eq:formula puntual}, with estimates on the kernel.
The extension problem is explained. We also include in this section
the estimates for the fundamental solutions and comment about the
Harnack inequality of \cite{Stinga-Zhang} and the De Giorgi--Nash--Moser theory for
the case of bounded measurable coefficients. Section \ref{section:3} contains a Caccioppoli inequality 
that we use to prove an approximation lemma via a compactness argument.
Here we also prove a trace inequality on balls with explicit dependence on
the radius that will be useful to prove regularity. Then Section \ref{section:4}
is devoted to the proof of the interior regularity results (Dirichlet case).
The case of the fractional Dirichlet Laplacian in a half space
is studied in detail in Section \ref{section:5}. We collect in Section \ref{section:6} the proof of the
boundary estimates for the Dirichlet case.
 
\smallskip 
 
\noindent\textbf{Notation.} The notation we will use in this paper is the following.
The upper half space is given by
$\R^n_+:=\{(x',x_n)\in\R^n:x'\in\R^{n-1},x_n>0\}$. For the extension problem
we use the notation $\R^{n+1}_+=\{(x,y)\in\R^{n+1}:x\in\R^{n},y>0\}$. We usually write $X=(x,y)\in\R^{n+1}_+$.
 For $x_0\in\R^n$ and $r>0$ we denote
\begin{align*}
	B_r(x_0) &= \{x\in\R^n:|x-x_0|<r\},\\
	B_r^+(x_0) &= B_r(x_0)\cap\R^n_+,\\
	B_r(x_0)^* &= B_r(x_0)\times(0,r)\subset\R^{n+1}_+,\\
	B_r^+(x_0)^*&=B_r^+(x_0)\times(0,r)\subset\R^{n}_+\times(0,\infty).
\end{align*}
We will just put $B_r$, $B_r^+$, etc. whenever $x_0=0$. The letters $C$, $c$ and $d$
will denote positive constants that may change at each occurrence. We will add subscripts to them
whenever we want to stress their dependence on other constants, domains, etc.
The matrix $B(x)$ and the parameter $a$ are given by \eqref{B}. 
The notation $\dive$ and $\nabla$
stand for the divergence and the gradient with respect to the variable $X=(x,y)\in\Omega\times(0,\infty)$.

\section{Fractional divergence form elliptic operators}\label{section:2}

Throughout this section, unless explicitly stated, $\Omega$ will be a bounded Lipschitz domain of
$\R^n$ and the matrix of coefficients $A(x)$ will be uniformly elliptic, bounded and measurable.

\subsection{Definition of $L^s$}

The operator $L$ is nonnegative and selfadjoint in
the Sobolev space $H^1_0(\Omega)$. Therefore there exists an orthonormal basis of $L^2(\Omega)$
consisting of eigenfunctions $\phi_k\in H^1_0(\Omega)$, $k=0,1,2,\dots$,
that correspond to eigenvalues $0<\lambda_0<\lambda_1\leq\lambda_2\leq\cdots\nearrow\infty$.
Let us define the domain $\mathcal{H}^s\equiv\Dom(L^s)$ of the fractional operator $L^s$, $0<s<1$, as the
Hilbert space of functions
$$u=\sum_{k=0}^\infty u_k\phi_k=\sum_{k=0}^\infty \langle u,\phi_k \rangle_{L^2(\Omega)}\phi_k\in L^2(\Omega),$$
with inner product
$$\langle u,\psi\rangle_{\mathcal{H}^s}^2:=\sum_{k=0}^\infty \lambda_k^su_kd_k,$$
where $\psi=\sum_{k=0}^\infty d_k\phi_k\in\mathcal{H}^s$.
Observe that for some positive constant $C$ we have
$\|u\|_{L^2(\Omega)}\leq C\langle u,u\rangle_{\mathcal{H}^s}
=C\|u\|_{\mathcal{H}^s}$, for $u\in\mathcal{H}^s$,
so that $\langle\cdot,\cdot\rangle_{\mathcal{H}^s}$ defines indeed an inner product in $\mathcal{H}^s$.
For $u\in\mathcal{H}^s$, let $L^su$ be the element in the dual space $\mathcal{H}^{-s}:=(\mathcal{H}^s)'$ given by
$$L^su=\sum_{k=0}^\infty \lambda_k^su_k\phi_k,\quad\hbox{in}~\mathcal{H}^{-s}.$$
Namely,
$$\langle L^su,\psi\rangle=\sum_{k=0}^\infty \lambda_k^su_kd_k=\langle u,\psi\rangle_{\mathcal{H}^s},$$
where $\langle\cdot,\cdot\rangle$ denotes the paring between $\mathcal{H}^s$ and $\mathcal{H}^{-s}$.
By the Riesz representation theorem any functional $f\in\H^{-s}$ can be
written as $f=\sum_{k=0}^\infty f_k\varphi_k$  in $\H^{-s}$, where the coefficients $f_k$ 
satisfy $\sum_{k=0}^\infty \lambda_k^{-s}f_k^2<\infty$.
With these definitions and observations, if $f=\sum_{k=0}^\infty f_k\phi_k\in\H^{-s}$ then the unique solution
$u\in\mathcal{H}^s$ to the Dirichlet problem \eqref{uma}
is given by
$u=\sum_{k=0}^\infty\lambda_k^{-s}f_k\phi_k\in\mathcal{H}^s$.
More generally, if $f\in\H^r$ for $r\geq0$ (here $\H^0:=L^2(\Omega)$), then 
there exists a unique solution $u\in\H^{r+2s}$.

\begin{rem}\label{rem:H fancy en Hs}
We use the following notation:
 \begin{equation}\label{fractional Sobolev spaces}
 H^s:=\begin{cases}
 H^s(\Omega),&\hbox{when}~0<s<1/2,\\
 H^{1/2}_{00}(\Omega),&\hbox{when}~s=1/2,\\
 H^s_0(\Omega),&\hbox{when}~1/2<s<1.
 \end{cases}
 \end{equation}
 The spaces $H^s(\Omega)$ and $H^s_0(\Omega)$, $s\neq1/2$, are 
 the classical fractional Sobolev spaces given by
 the completion of $C^\infty_c(\Omega)$ under the norm
 $$\|u\|_{H^s(\Omega)}^2=\|u\|_{L^2(\Omega)}^2+[u]_{H^s(\Omega)}^2,$$
 where
$$[u]_{H^s(\Omega)}^2= \int_\Omega\int_\Omega
 \frac{(u(x)-u(z))^2}{|x-z|^{n+2s}}\,dx\,dz,\quad0<s<1.$$
 The space $H^{1/2}_{00}(\Omega)$ is the Lions--Magenes space which consists of
 functions $u$ in $L^2(\Omega)$ such that $[u]_{H^{1/2}(\Omega)}<\infty$ and
 $$\int_{\Omega}\frac{u(x)^2}{\operatorname{dist}(x,\partial\Omega)}\,dx<\infty.$$
 See \cite[Chapter~1]{Lions-Magenes}, also \cite[Section~2]{Nochetto-Otarola-Salgado}
 for a discussion. The norm in any of these spaces is denoted by $\|\cdot\|_{H^s}$.
 We will later see, by using the extension problem, that in fact we have
 $\mathcal{H}^s=H^s$  as Hilbert spaces.
\end{rem}

\subsection{Heat semigroup and pointwise formula}

Given a function $u=\sum_{k=0}^\infty u_k\phi_k$ in $L^2(\Omega)$, the weak
 solution $v(x,t)$ to the heat equation
for $L$ with Dirichlet boundary condition
$$\begin{cases}
 v_t=-Lv,&\hbox{in}~\Omega\times(0,\infty),\\
 v(x,t)=0,&\hbox{on}~\partial\Omega\times[0,\infty),\\
 v(x,0)=u(x),&\hbox{on}~\Omega,
\end{cases}$$
is given by
$$v(x,t)\equiv e^{-tL}u(x)=\sum_{k=0}^\infty e^{-t\lambda_k}u_k\phi_k(x),$$
in the sense that, for every test function $\psi=\sum_{k=0}^\infty d_k\phi_k\in H^1_0(\Omega)$,
$$\langle e^{-tL}u,\psi\rangle_{L^2(\Omega)}=
\sum_{k=0}^\infty e^{-t\lambda_k}u_kd_k.$$
In particular, $e^{-tL}u\in L^2((0,\infty);H^1_0(\Omega))\cap C([0,\infty);L^2(\Omega))$,
and $\partial_te^{-tL}u\in L^2((0,\infty);H^{-1}(\Omega))$.

\begin{lem}\label{lem:semi}
 Let $u\in\H^s$. Then
 $$L^su=\frac{1}{\Gamma(-s)}\int_0^\infty\big(e^{-tL}u-u\big)\,\frac{dt}{t^{1+s}},
 \quad\hbox{in}~\H^{-s}.$$
 More precisely, if $\psi\in\H^s$ then
 \begin{equation}\label{Ls with semigroup}
 \langle L^su,\psi\rangle=\frac{1}{\Gamma(-s)}\int_0^\infty\big(\langle 
 e^{-tL}u,\psi\rangle_{L^2(\Omega)}-\langle u,\psi\rangle_{L^2(\Omega)}\big)\,\frac{dt}{t^{1+s}}.
 \end{equation}
\end{lem}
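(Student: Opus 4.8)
The plan is to reduce the statement to the scalar spectral identity
$$\lambda^s=\frac{1}{\Gamma(-s)}\int_0^\infty\big(e^{-t\lambda}-1\big)\,\frac{dt}{t^{1+s}},\qquad\lambda>0,$$
and then integrate term by term against the eigenbasis. The scalar identity is classical: the integral converges absolutely, since near $t=0$ one has $e^{-t\lambda}-1=O(t)$ so the integrand is $O(t^{-s})$ (integrable because $s<1$), while near $t=\infty$ the integrand is $O(t^{-1-s})$ (integrable because $s>0$); its value follows from the substitution $r=t\lambda$ together with the analytic continuation $\Gamma(z)=\int_0^\infty(e^{-r}-1)r^{z-1}\,dr$, valid for $-1<\operatorname{Re}z<0$, evaluated at $z=-s$. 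Note that $\Gamma(-s)<0$, in agreement with $e^{-t\lambda}-1<0$.

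Now fix $u=\sum_k u_k\phi_k\in\H^s$ and a test function $\psi=\sum_k d_k\phi_k\in\H^s$. From the spectral description of $e^{-tL}$ recalled above,
$$\langle e^{-tL}u,\psi\rangle_{L^2(\Omega)}-\langle u,\psi\rangle_{L^2(\Omega)}=\sum_{k=0}^\infty\big(e^{-t\lambda_k}-1\big)u_kd_k,\qquad t>0.$$
Inserting this into the right-hand side of \eqref{Ls with semigroup} and interchanging summation and integration (justified below), the $k$-th term produces $\frac{1}{\Gamma(-s)}\int_0^\infty(e^{-t\lambda_k}-1)\,t^{-1-s}\,dt\cdot u_kd_k=\lambda_k^su_kd_k$ by the scalar identity (here $\lambda_k\geq\lambda_0>0$), and summing over $k$ gives $\sum_k\lambda_k^su_kd_k=\langle L^su,\psi\rangle$ by the definition of $L^s$. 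This is exactly \eqref{Ls with semigroup}; since $\psi\in\H^s$ is arbitrary, it follows that $L^su=\frac{1}{\Gamma(-s)}\int_0^\infty(e^{-tL}u-u)\,t^{-1-s}\,dt$ in $\H^{-s}$.

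The only genuine obstacle is the interchange of sum and integral, which I would justify by Tonelli's theorem. Because $e^{-t\lambda_k}-1\le0$, the scalar identity also yields $\int_0^\infty|e^{-t\lambda_k}-1|\,t^{-1-s}\,dt=\int_0^\infty(1-e^{-t\lambda_k})\,t^{-1-s}\,dt=|\Gamma(-s)|\,\lambda_k^s$. Hence, by Cauchy--Schwarz and the definition of the $\H^s$-norm,
$$\sum_k|u_kd_k|\int_0^\infty\big|e^{-t\lambda_k}-1\big|\,\frac{dt}{t^{1+s}}=|\Gamma(-s)|\sum_k\lambda_k^s|u_kd_k|\le|\Gamma(-s)|\,\|u\|_{\H^s}\|\psi\|_{\H^s}<\infty.$$
This finiteness licenses Fubini--Tonelli and closes the argument. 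The same bound shows that $t\mapsto(e^{-tL}u-u)\,t^{-1-s}$ is Bochner integrable with values in $\H^{-s}$, so the integral in the statement converges in $\H^{-s}$ and not merely in the weak sense.
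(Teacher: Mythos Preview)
Your proof is correct and follows essentially the same route as the paper: both reduce to the scalar identity $\lambda^s=\frac{1}{\Gamma(-s)}\int_0^\infty(e^{-t\lambda}-1)\,t^{-1-s}\,dt$ and then interchange the sum over eigenmodes with the $t$-integral via Fubini, using $u,\psi\in\H^s$. Your version supplies more detail on the convergence of the scalar integral and on the Tonelli justification (including the explicit bound $|\Gamma(-s)|\sum_k\lambda_k^s|u_kd_k|\le|\Gamma(-s)|\|u\|_{\H^s}\|\psi\|_{\H^s}$), and adds the observation about Bochner integrability in $\H^{-s}$, but the core argument is the same.
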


\begin{proof}
 We have the following numerical formula with the Gamma function:
 $$\lambda^s=\frac{1}{\Gamma(-s)}\int_0^\infty\big(e^{-t\lambda}-1\big)\,\frac{dt}{t^{1+s}},
 \quad\hbox{for}~\lambda>0,~0<s<1.$$
 Then, if $\psi=\sum_{k=0}^\infty d_k\phi_k$,
 \begin{align*}
  \langle L^su,\psi\rangle &= \sum_{k=0}^\infty \lambda_k^su_kd_k
  = \frac{1}{\Gamma(-s)}\sum_{k=0}^\infty \int_0^\infty\big(e^{-t\lambda_k}u_kd_k-u_kd_k\big)\,\frac{dt}{t^{1+s}}\\
  &= \frac{1}{\Gamma(-s)}\int_0^\infty\left(\sum_{k=0}^\infty e^{-t\lambda_k}u_kd_k-\sum_{k=0}^\infty 
  u_kd_k\right)\frac{dt}{t^{1+s}},
 \end{align*}
 which is the desired formula. The last identity follows from Fubini's theorem, since $u,\psi\in\H^s$.
\end{proof}

Let $W_t(x,z)$ be the heat kernel for $L$ with Dirichlet boundary condition, that is,
\begin{equation}\label{eq:heat kernel}
W_t(x,z)=\sum_{k=0}^\infty e^{-t\lambda_k}\phi_k(x)\phi_k(z)=W_t(z,x),\quad t>0,~x,z\in\Omega.
\end{equation}
We have that $W_t(x,z)\geq0$ (see \cite{Davies}) and that if $u,\psi\in L^2(\Omega)$ then
$$\langle e^{-tL}u,\psi\rangle_{L^2(\Omega)}=\int_\Omega\int_\Omega W_t(x,z)
u(z)\psi(x)\,dz\,dx=\langle u,e^{-tL}\psi\rangle_{L^2(\Omega)},\quad t\geq0.$$

\begin{thm}[Pointwise/energy formula]\label{thm:puntual}
 Let $u,\psi\in\H^s$. Then \eqref{eq:formula puntual} holds, where
 \begin{equation}\label{eq:kernel Ks}
 0\leq K_s(x,z):=\frac{1}{2|\Gamma(-s)|}\int_0^\infty W_t(x,z)\,\frac{dt}{t^{1+s}}
 \leq\frac{c_{n,s}}{|x-z|^{n+2s}},\quad x\neq z,
 \end{equation}
 and
 \begin{equation}\label{eq:funcion Bs}
 B_s(x):=\frac{1}{2|\Gamma(-s)|}\int_0^\infty\big(1-e^{-tL}1(x)\big)\,\frac{dt}{t^{1+s}}\geq0.
 \end{equation}
\end{thm}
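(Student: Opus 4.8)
The plan is to start from the semigroup representation of Lemma~\ref{lem:semi}, insert the heat kernel \eqref{eq:heat kernel}, symmetrize in the spatial variables using $W_t(x,z)=W_t(z,x)\ge0$, and then interchange the order of the $t$-integral with the $x,z$-integrals; the quantitative bound on $K_s$ will come from the Gaussian upper estimate for $W_t$.

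\emph{Step 1 (symmetrization).} Fix $u,\psi\in\H^s$. For each $t>0$, using $u,\psi\in L^2(\Omega)$ and the symmetry of $W_t$,
\begin{equation*}
\langle e^{-tL}u,\psi\rangle_{L^2(\Omega)}=\int_\Omega\int_\Omega W_t(x,z)u(z)\psi(x)\,dz\,dx=\frac12\int_\Omega\int_\Omega W_t(x,z)\big(u(z)\psi(x)+u(x)\psi(z)\big)\,dz\,dx.
\end{equation*}
Combining the elementary identity $u(z)\psi(x)+u(x)\psi(z)=u(x)\psi(x)+u(z)\psi(z)-(u(x)-u(z))(\psi(x)-\psi(z))$ with $\int_\Omega W_t(x,z)\,dz=e^{-tL}1(x)$ (and, by symmetry, $\int_\Omega W_t(x,z)\,dx=e^{-tL}1(z)$), and subtracting $\langle u,\psi\rangle_{L^2(\Omega)}=\int_\Omega u\psi$, one gets
\begin{align*}
\langle e^{-tL}u,\psi\rangle_{L^2(\Omega)}-\langle u,\psi\rangle_{L^2(\Omega)}={}&-\frac12\int_\Omega\int_\Omega W_t(x,z)\big(u(x)-u(z)\big)\big(\psi(x)-\psi(z)\big)\,dx\,dz\\
&-\int_\Omega u(x)\psi(x)\big(1-e^{-tL}1(x)\big)\,dx.
\end{align*}

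\emph{Step 2 (integration in $t$).} Multiply by $1/\Gamma(-s)$, recall that $\Gamma(-s)<0$ for $0<s<1$, integrate against $dt/t^{1+s}$ over $(0,\infty)$, and invoke Lemma~\ref{lem:semi}. Moving the $t$-integral inside the spatial integrals produces exactly \eqref{eq:formula puntual}, with $K_s$ and $B_s$ given by \eqref{eq:kernel Ks} and \eqref{eq:funcion Bs}. The interchange is legitimate by Fubini once absolute convergence is verified: for the $K_s$-term, Tonelli together with the bound of Step~3 controls the triple integral of absolute values by $C\int_\Omega\int_\Omega\frac{|u(x)-u(z)||\psi(x)-\psi(z)|}{|x-z|^{n+2s}}\,dx\,dz\le C[u]_{H^s(\Omega)}[\psi]_{H^s(\Omega)}$ via Cauchy--Schwarz; for the $B_s$-term one uses $0\le1-e^{-tL}1\le1$ together with the boundary decay of functions in $\H^s$. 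To bypass the (later) identification $\H^s=H^s$, one may instead first prove the identity for finite linear combinations of the eigenfunctions $\phi_k$ — which belong to $H^1_0(\Omega)$, so all the integrals converge absolutely — and then pass to the limit in the $\H^s$ norm.

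\emph{Step 3 (kernel estimates).} Since $L=-\dive(A\nabla)$ is uniformly elliptic with bounded measurable coefficients, De Giorgi--Nash--Moser theory gives Aronson's Gaussian upper bound $0\le W_t(x,z)\le Ct^{-n/2}e^{-c|x-z|^2/t}$ (see \cite{Davies}; for the Dirichlet kernel this also follows from domain monotonicity against the whole-space kernel). Substituting into \eqref{eq:kernel Ks} and changing variables $\tau=|x-z|^2/t$,
\begin{equation*}
0\le K_s(x,z)\le\frac{C}{2|\Gamma(-s)|}\int_0^\infty t^{-n/2-1-s}e^{-c|x-z|^2/t}\,dt=\frac{c_{n,s}}{|x-z|^{n+2s}}\int_0^\infty\tau^{n/2+s-1}e^{-c\tau}\,d\tau,
\end{equation*}
and the last integral is a finite constant, giving the asserted bound on $K_s$; positivity is immediate from $W_t\ge0$. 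Finally $B_s\ge0$ because $e^{-tL}$ is sub-Markovian ($W_t\ge0$ and $\int_\Omega W_t(x,z)\,dz\le1$), so $0\le e^{-tL}1\le1$ pointwise; moreover the Gaussian bound yields $1-e^{-tL}1(x)\lesssim e^{-c\dist(x,\partial\Omega)^2/t}$ for small $t$, so the integral defining $B_s(x)$ converges for every $x\in\Omega$.

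\emph{Main obstacle.} The only genuinely delicate point is the rigorous use of Fubini in Step~2, namely the absolute convergence of the iterated integrals — in particular the term $\int_\Omega u\psi\big(\int_0^\infty(1-e^{-tL}1)\,dt/t^{1+s}\big)\,dx$, whose inner factor is of order $\dist(x,\partial\Omega)^{-2s}$ near $\partial\Omega$, so one needs either a Hardy-type inequality controlling this weighted norm on $\H^s$ or the eigenfunction density argument above. Everything else is bookkeeping.
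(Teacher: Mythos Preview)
Your proof is correct and follows the same overall strategy as the paper: semigroup formula $\to$ heat kernel $\to$ symmetrization $\to$ Fubini $\to$ Gaussian bound. Your Step~1 reaches the symmetrized identity by the algebraic identity $u(z)\psi(x)+u(x)\psi(z)=u(x)\psi(x)+u(z)\psi(z)-(u(x)-u(z))(\psi(x)-\psi(z))$ in one shot, whereas the paper writes the asymmetric expression with $(u(x)-u(z))\psi(z)$, swaps $x\leftrightarrow z$, and adds; the outcome is identical. Your Step~3 is exactly the paper's argument (this is Theorem~\ref{thm:estimaciones Ks}(1)).

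The one substantive difference is in the Fubini justification for the $B_s$-term. You propose either a Hardy-type inequality (needing $\H^s=H^s$, a forward reference) or a density-plus-limit argument. The paper instead uses a self-contained trick that you may find worth noting: once the identity \eqref{casi} is established \emph{before} swapping the $t$-integral inside, take $\psi=u$. Both terms on the right of \eqref{casi} are nonnegative (since $W_t\ge0$ and $0\le e^{-tL}1\le1$), so
\[
0\le\int_0^\infty\!\!\int_\Omega |u|^2\big(1-e^{-tL}1\big)\,dz\,\frac{dt}{t^{1+s}}\le 2|\Gamma(-s)|\langle L^su,u\rangle=2|\Gamma(-s)|\|u\|_{\H^s}^2<\infty,
\]
which is exactly the finiteness needed for Tonelli/Fubini on that term, with no appeal to Hardy inequalities or the later identification $\H^s=H^s$. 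One then polarizes via $u\psi=\tfrac12\big(u^2+\psi^2-(u-\psi)^2\big)$. This is cleaner than either of your alternatives, though both of yours can be made to work (and the paper itself forward-references $\H^s=H^s$ for the $K_s$-term, as you do).
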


\begin{proof}
 By plugging the heat kernel into \eqref{Ls with semigroup},
 \begin{align*}
  \Gamma(-s)&\langle L^su,\psi\rangle = \int_0^\infty\int_\Omega\left[\int_\Omega
  W_t(x,z)u(x)\psi(z)\,dx-u(z)\psi(z)\right]dz\,\frac{dt}{t^{1+s}} \\
  &= \int_0^\infty\int_\Omega\left[\int_\Omega
  W_t(x,z)\big(u(x)-u(z)\big)\psi(z)\,dx+u(z)\psi(z)\left(\int_\Omega
  W_t(x,z)\,dx-1\right)\right]dz\,\frac{dt}{t^{1+s}} \\
  &= \int_0^\infty\int_\Omega\int_\Omega
  W_t(x,z)\big(u(x)-u(z)\big)\psi(z)\,dx\,dz\,\frac{dt}{t^{1+s}} \\
  &\quad +\int_0^\infty\int_\Omega u(z)\psi(z)\left(e^{-tL}1(z)-1\right)\,dz\,\frac{dt}{t^{1+s}}=:I.
 \end{align*}
 By exchanging the roles of $x$ and $z$ and using the symmetry of the heat kernel, we also have that
 $$I=-\int_0^\infty\int_\Omega\int_\Omega
  W_t(x,z)\big(u(x)-u(z)\big)\psi(x)\,dx\,dz\,\frac{dt}{t^{1+s}} 
+\int_0^\infty\int_\Omega u(z)\psi(z)\left(e^{-tL}1(z)-1\right)\,dz\,\frac{dt}{t^{1+s}}.$$
Therefore, by adding both identities for $I$,
\begin{equation}\label{casi}
\begin{aligned}
 2|\Gamma(-s)|\langle L^su,\psi\rangle &= \int_0^\infty\int_\Omega\int_\Omega
  W_t(x,z)\big(u(x)-u(z)\big)\big(\psi(x)-\psi(z))\,dx\,dz\,\frac{dt}{t^{1+s}} \\
  &\qquad +\int_0^\infty\int_\Omega u(z)\psi(z)\left(1-e^{-tL}1(z)\right)\,dz\,\frac{dt}{t^{1+s}}.
\end{aligned}
\end{equation}
To reach the final expression with the kernel $K_s(x,z)$ and the function $B_s(x)$
we need to interchange the order of integration in \eqref{casi}. 
The estimate for the kernel $K_s(x,z)$ is contained in Theorem \ref{thm:estimaciones Ks}.
Since $u,\psi\in H^s$ (see Remark \ref{rem:H fancy en Hs}) it follows that
Fubini's theorem can be applied to the first term in the right hand side of \eqref{casi}.
For the second term in \eqref{casi}, take $\psi=u$. Observe that 
$0\leq e^{-tL}1(z)\leq 1$, which
follows from the maximum principle. This and the fact that 
$K_s(x,z)\geq0$ imply in \eqref{casi} that 
\begin{align*}
	0 &\leq \int_0^\infty\int_\Omega|u(z)|^2\big(1-e^{-tL}1(z)\big)\,dz\,\frac{dt}{t^{1+s}} \\
	&=2|\Gamma(-s)|\langle L^su,u\rangle-\int_\Omega\int_\Omega\big(u(x)-u(z)\big)^2K_s(x,z)\,dx\,dz
	\leq 2|\Gamma(-s)|\|u\|_{\mathcal{H}^s}<\infty.
\end{align*}
Then, by Fubini's theorem,
$$0\leq\int_0^\infty\int_\Omega|u(z)|^2\big(1-e^{-tL}1(z)\big)\,dz\,\frac{dt}{t^{1+s}}=
\int_\Omega|u(z)|^2B_s(z)\,dz<\infty,$$
with $B_s(z)$ as in the statement. The same is true for when we replace $u$ by $\psi$ and by
$u-\psi$. Thus, by writing $u\psi=\frac{1}{2}(u^2+\psi^2-(u-\psi)^2)$, it follows
that we can apply Fubini's theorem to the second term of \eqref{casi}.
\end{proof}

\begin{thm}[Estimates for $K_s(x,z)$]\label{thm:estimaciones Ks}
Let $K_s(x,z)\geq0$ be the kernel in \eqref{eq:kernel Ks}.
\begin{enumerate}[$(1)$]
\item If the coefficients $A(x)$ are bounded and measurable then
$$K_s(x,z)\leq\frac{c_{n,s}}{|x-z|^{n+2s}},\quad x,z\in\Omega,~x\neq z.$$
\item If the coefficients $A(x)$ are bounded and measurable in $\Omega=\R^n$ then
$$K_s(x,z)\sim\frac{c_{n,s}}{|x-z|^{n+2s}},\quad x,z\in\R^n,~x\neq z.$$
In this case the function $B_s(x)$ of \eqref{eq:funcion Bs} is identically zero.
\item If the coefficients $A(x)$ are H\"older continuous in $\Omega$ with exponent $\alpha\in(0,1)$
 then there exist positive constants $c$ and $\eta\leq 1\leq\rho$
depending only on $n$, $\alpha$, $\Omega$ and ellipticity, with $c$ depending also on $s$, such that
$$c^{-1}\min\bigg(1,\frac{\phi_0(x)\phi_0(z)}{|x-z|^{2\eta}}\bigg)\frac{1}{|x-z|^{n+2s}}
\leq K_s(x,z)\leq c\min\bigg(1,\frac{\phi_0(x)\phi_0(z)}{|x-z|^{2\rho}}\bigg)\frac{1}{|x-z|^{n+2s}},$$
where $\lambda_0$ and $\phi_0$ are the first eigenvalue and the first eigenfunction of $L$. Here, for some constant 
$C>0$ depending on $\alpha$, $n$, $\Omega$ and ellipticity,
$$C^{-1}\dist(x,\partial\Omega)^\rho\leq\phi_0(x)\leq C\dist(x,\partial\Omega)^\eta.$$
\item Under the hypothesis of $(3)$, if in addition
$\Omega$ is a $C^{1,\gamma}$ domain for some $0<\gamma<1$, then
the estimate in $(3)$ is true for $\eta=\rho=1$ and $c$ depending also on $\gamma$.
In particular, the estimate holds when $L^s=(-\Delta_D)^s$,
the fractional Dirichlet Laplacian in a $C^{1,\gamma}$ domain.
\end{enumerate}
\end{thm}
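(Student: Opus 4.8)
The plan is to reduce each assertion to a pointwise estimate on the Dirichlet heat kernel $W_t(x,z)$ and then evaluate the elementary Gamma-integral defining $K_s$ in \eqref{eq:kernel Ks}. The basic computation is that, after the substitution $\tau=\lambda/t$,
\[
\int_0^\infty t^{-n/2}e^{-c\lambda/t}\,\frac{dt}{t^{1+s}}=c^{-(n/2+s)}\,\Gamma\big(\tfrac n2+s\big)\,\lambda^{-(n/2+s)},\qquad\lambda>0,
\]
so that a Gaussian bound on $W_t$ integrates to a power bound on $K_s$, with the constant $c_{n,s}$ produced by the Gamma factor. For $(1)$ I would use only the upper Aronson estimate: a uniformly elliptic operator in divergence form with bounded measurable coefficients has whole-space heat kernel $W_t^{\R^n}(x,z)\le C t^{-n/2}e^{-c|x-z|^2/t}$, and by the parabolic maximum principle the Dirichlet kernel of any $\Omega$ is dominated by it, $0\le W_t(x,z)\le W_t^{\R^n}(x,z)$; inserting this into \eqref{eq:kernel Ks} and using the displayed integral gives the claim, while $B_s\ge0$ was already noted in the proof of Theorem \ref{thm:puntual} (from $0\le e^{-tL}1\le1$). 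For $(2)$, on $\Omega=\R^n$ Aronson's matching lower bound $W_t(x,z)\ge c t^{-n/2}e^{-C|x-z|^2/t}$ also holds, and the same integral yields the lower bound on $K_s$; moreover $B_s\equiv0$ because the heat semigroup on $\R^n$ is conservative, $e^{-tL}1\equiv1$, so the integrand of \eqref{eq:funcion Bs} vanishes identically.

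The substantive case is $(3)$. Here the plan is to split $\int_0^\infty=\int_0^{\tau_0}+\int_{\tau_0}^\infty$ at $\tau_0:=\operatorname{diam}(\Omega)^2$. For the tail $t\ge\tau_0$ one invokes intrinsic ultracontractivity of the Dirichlet semigroup on a bounded Lipschitz domain, $c_1e^{-\lambda_0 t}\phi_0(x)\phi_0(z)\le W_t(x,z)\le c_2e^{-\lambda_0 t}\phi_0(x)\phi_0(z)$; since $\int_{\tau_0}^\infty e^{-\lambda_0 t}\,t^{-1-s}\,dt<\infty$, this part contributes a quantity comparable to $\phi_0(x)\phi_0(z)$. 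For the lower bound on $K_s$ this is a harmless nonnegative addition, while for the upper bound one checks, using $|x-z|\le\operatorname{diam}(\Omega)$ and $\phi_0\in L^\infty$, that $\phi_0(x)\phi_0(z)\le C\min(1,\phi_0(x)\phi_0(z)|x-z|^{-2\rho})\,|x-z|^{-n-2s}$, so the tail is absorbed into the asserted bound. For the short-time part $t\le\tau_0$ one feeds in the sharp boundary-sensitive heat kernel estimates for divergence form operators with H\"older continuous coefficients, which are of the schematic two-sided form
\[
W_t(x,z)\asymp t^{-n/2}e^{-c|x-z|^2/t}\,P(t,x)\,P(t,z),
\]
where the boundary factor $P$ vanishes as $x$ or $z$ approaches $\partial\Omega$ at a rate comparable to $\phi_0$, with (in general) a worse vanishing exponent $\rho\ge1$ in the lower bound and a better exponent $\eta\le1$ in the upper one; since the exponential factor localizes the $t$-integral to $t\sim|x-z|^2$, integrating against $t^{-1-s}\,dt$ converts these boundary factors precisely into $\min(1,\phi_0(x)\phi_0(z)|x-z|^{-2\rho})$ for the lower estimate and $\min(1,\phi_0(x)\phi_0(z)|x-z|^{-2\eta})$ for the upper one.

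The boundary behavior $C^{-1}\dist(x,\partial\Omega)^\rho\le\phi_0(x)\le C\dist(x,\partial\Omega)^\eta$ is obtained separately: the upper inequality follows from De Giorgi--Nash--Moser H\"older continuity up to the boundary applied to $\phi_0$, which vanishes on $\partial\Omega$ and solves $L\phi_0=\lambda_0\phi_0$, with $\eta\in(0,1]$ depending only on the Lipschitz character, $n$, $\alpha$ and ellipticity, while the lower inequality comes from a barrier construction together with a Harnack chain, giving $\rho\ge1$. For $(4)$, when $\Omega$ is $C^{1,\gamma}$ one may use the boundary Harnack principle---or $C^{1,\gamma}$ Schauder estimates together with the Hopf lemma applied to $L\phi_0=\lambda_0\phi_0$---to conclude $\phi_0(x)\sim\dist(x,\partial\Omega)$, i.e. $\eta=\rho=1$, and correspondingly the heat kernel boundary factors sharpen to $\min(1,\phi_0(x)/\sqrt t)$, so that the estimate of $(3)$ holds with $\eta=\rho=1$; the case $L^s=(-\Delta_D)^s$ is the special instance $A\equiv I$.

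The one genuine obstacle is the short-time Dirichlet heat kernel estimate used in $(3)$: obtaining two-sided bounds carrying the correct boundary decay for a divergence form operator with merely H\"older continuous coefficients on a Lipschitz domain, and identifying the exponents $\eta$ and $\rho$ with the boundary behavior of $\phi_0$. Once that input is in hand, everything else is the routine Gamma-integral computation above together with the elementary observation that the Gaussian factor confines the $t$-integration to $t\sim|x-z|^2$.
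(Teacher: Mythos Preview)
Your approach is exactly the paper's: quote known pointwise bounds on the Dirichlet heat kernel $W_t(x,z)$ and integrate in $t$ via $r=|x-z|^2/t$. For (1) and (2) this is identical to what the paper does (upper Gaussian bound from Davies, two-sided Aronson bounds on $\R^n$, conservativeness $e^{-tL}1\equiv1$). For (3) and (4) the paper is more terse than you: rather than splitting the $t$-integral and invoking intrinsic ultracontractivity, barriers, De Giorgi--Nash--Moser, and the Hopf lemma separately, it simply cites Riahi's two-sided estimate
\[
c^{-1}\min\Big(1,\tfrac{\phi_0(x)\phi_0(z)}{1\wedge t^{\eta}}\Big)e^{-\lambda_0 t}\tfrac{e^{-c_1|x-z|^2/t}}{1\wedge t^{n/2}}
\le W_t(x,z)\le
c\,\min\Big(1,\tfrac{\phi_0(x)\phi_0(z)}{1\wedge t^{\rho}}\Big)e^{-\lambda_0 t}\tfrac{e^{-c_2|x-z|^2/t}}{1\wedge t^{n/2}},
\]
together with Riahi's $\phi_0$ bounds and the remark that $\eta=\rho=1$ on $C^{1,\gamma}$ domains, and integrates. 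Your more detailed sketch is a reasonable expansion of what lies behind that citation.

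One genuine slip: you have swapped $\eta$ and $\rho$. In the heat kernel estimate above, and in the theorem you are proving, it is the \emph{lower} bound that carries the exponent $\eta\le1$ and the \emph{upper} bound that carries $\rho\ge1$ (for $t<1$ one has $t^\eta\ge t^\rho$, so dividing by $t^\eta$ gives the smaller factor, as a lower bound must). After integration this yields $\min(1,\phi_0(x)\phi_0(z)|x-z|^{-2\eta})$ in the lower $K_s$ bound and $\min(1,\phi_0(x)\phi_0(z)|x-z|^{-2\rho})$ in the upper, the opposite of what you wrote. With your assignment the two sides would be ordered the wrong way for small $|x-z|$.
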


\begin{proof}
We use the following known estimates for the heat kernel \eqref{eq:heat kernel} and
then integrate in $t$ in \eqref{eq:kernel Ks} via the change of variables $r=|x-z|^2/t\in(0,\infty)$.

\noindent$(1)$ In this case there exist constants $C,c>0$ depending on ellipticity, $n$ and $\Omega$ such that
$$W_t(x,z)\leq C\frac{e^{-|x-z|^2/(ct)}}{t^{n/2}},$$
for all $x,z\in\Omega$, $t>0$, see \cite[p.~89]{Davies}, also \cite{Auscher}.

\noindent$(2)$ For the case of bounded measurable coefficients in the whole space, the result of
Aronson \cite{Aronson} establishes that for some positive constants $c_1,\ldots, c_4$ depending
on ellipticity and $n$,
$$c_1\frac{e^{-|x-z|^2/(c_2t)}}{t^{n/2}}\leq W_t(x,z)\leq c_3\frac{e^{-|x-z|^2/(c_4t)}}{t^{n/2}},$$
for all $x,z\in\R^n$, $t>0$. See also \cite[p.~97]{Davies}. Moreover, in this case we have $e^{-tL}1(x)
\equiv 1$, so $B_s(x)\equiv0$.

\noindent$(3)$ Under these hypotheses it is proved in \cite[Theorem~2.2]{Riahi} that 
there exists positive constants $\eta\leq1\leq\rho$ and $c,c_1,c_2$ depending
only on $n,\alpha,\Omega$ and ellipticity such that 
\begin{align*}
c^{-1}\min\bigg(1,\frac{\phi_0(x)\phi_0(z)}{1\wedge t^\eta}\bigg)e^{-\lambda_0t}
\frac{e^{-c_1|x-z|^2/t}}{1\wedge t^{n/2}}& \leq W_t(x,z) \\
&\leq c^{-1}\min\bigg(1,\frac{\phi_0(x)\phi_0(z)}{1\wedge t^\rho}\bigg)e^{-\lambda_0t}
\frac{e^{-c_2|x-z|^2/t}}{1\wedge t^{n/2}},
\end{align*}
for all $x,z\in\Omega$, $t>0$. The behavior of $\phi_0$ is also known, see \cite[(1.2)]{Riahi}.

\noindent$(4)$ This follows from the fact that in the heat kernel estimate written in $(3)$ 
above we can take $\eta=\rho=1$, see \cite[Remark~1,~p.~123]{Riahi}.
\end{proof}

\subsection{Extension problem}

We particularize to our situation the extension problem of Stinga--Torrea \cite{Stinga-Torrea}, which is
in turn a generalization
of the Caffarelli--Silvestre extension problem of \cite{Caffarelli-Silvestre}.
Let us explain the details, which can be found in \cite{Stinga, Stinga-Torrea}.

Let $u\in\H^s$. Consider the solution $U=U(x,y):\Omega\times[0,\infty)\to\R$ to the extension problem
\begin{equation}\label{una estrellita}
 \begin{cases}
  -LU+\frac{a}{y}U_y+U_{yy}=0,& \hbox{in}~\Omega\times(0,\infty),\\
  U(x,y)=0,& \hbox{on}~\partial\Omega\times[0,\infty),\\
  U(x,0)=u(x),& \hbox{on}~\Omega.
\end{cases}
\end{equation}
The equation above is in principle understood in the sense that
$U$ belongs to $C^\infty((0,\infty);H^1_0(\Omega))\cap C([0,\infty);L^2(\Omega))$ and
$$\int_{\Omega}A(x)\nabla_xU(x,y)\nabla_x\eta(x)\,dx=\int_\Omega\big(\tfrac{a}{y}U_y+U_{yy}\big)\eta(x)\,dx,
\quad\hbox{for each}~y>0,$$
for any test function $\eta\in H^1_0(\Omega)$. The boundary conditions in $y$ read
$$\lim_{y\to0^+}U(x,y)=u(x)~\hbox{in}~L^2(\Omega),\quad
\hbox{and}\quad\lim_{y\to\infty}U(x,y)=0~\hbox{weakly in}~L^2(\Omega).$$
Notice that problem \eqref{una estrellita} can also be written in divergence form as \eqref{dos estrellitas}.
It was shown in \cite{Stinga, Stinga-Torrea} that if $u=\sum_{k=0}^\infty u_k\phi_k$ then the solution to this problem
is given by
\begin{equation}\label{discovery channel}
U(x,y)=y^s\frac{2^{1-s}}{\Gamma(s)}\sum_{k=0}^\infty \lambda_k^{s/2}u_k
\mathcal{K}_s(\lambda_k^{1/2}y)\phi_k(x),
\end{equation}
where $\mathcal{K}_s$ is the modified Bessel function of the second kind
and parameter $s$. Equivalently,
\begin{equation}\label{formula con el semigrupo}
\begin{aligned}
	U(x,y) &= \frac{y^{2s}}{4^s\Gamma(s)}\int_0^\infty e^{-y^2/(4t)}e^{-tL}u(x)\,\frac{dt}{t^{1+s}} \\
	 &= \frac{1}{\Gamma(s)}\int_0^\infty e^{-y^2/(4t)}e^{-tL}(L^su)(x)\,\frac{dt}{t^{1-s}}.
\end{aligned}
\end{equation}
By using the heat kernel one can show that
\begin{equation}\label{Poisson formula}
U(x,y)=\int_\Omega P_y^s(x,z)u(z)\,dz,
\end{equation}
where the Poisson kernel $P_y^s(x,z)$ is given by
\begin{equation}\label{Poisson kernel}
P_y^s(x,z)=\frac{y^{2s}}{4^s\Gamma(s)}\int_0^\infty e^{-y^2/(4t)}W_t(x,z)\,\frac{dt}{t^{1+s}}.
\end{equation}
In addition, by letting $c_s=\frac{\Gamma(1-s)}{4^{s-1/2}\Gamma(s)}>0$, we have
\begin{equation}\label{Neumann condition}
-\lim_{y\to0^+}y^aU_y(x,y)=c_sL^su,\quad\hbox{in}~\mathcal{H}^{-s}.
\end{equation}

It is easy to show, by using the representation with eigenfunctions and Bessel functions
of \eqref{discovery channel}, that $U$ belongs to the space
$H^1_0(\Omega\times(0,\infty),y^adX)$, which is the completion of $C^\infty_c(\Omega\times[0,\infty))$
under the norm
$$\|U\|^2_{H^1_0(\Omega\times(0,\infty),y^adX)}=\int_\Omega\int_0^\infty y^a\big(U^2+|\nabla U|^2)\,dX.$$
See \cite{Fabes, Fabes-Kenig-Serapioni}, also \cite{Turesson}, for the theory of weighted Sobolev spaces.
It is known (see \cite[Proposition~2.1]{CDDS}) that these weighted Sobolev spaces have the fractional Sobolev spaces $H^s$ defined in \eqref{fractional Sobolev spaces} as trace spaces, that is,
$$\|U(\cdot,0)\|_{H^s}\leq C_{\Omega,a}\|U\|_{H^1_0(\Omega\times(0,\infty),y^adX)}.$$
Therefore, $u(x)=U(x,0)\in H^s$. This and the fact that the norm in $H^1_0(\Omega\times(0,\infty),y^adX)$
is comparable to the natural energy for the extension equation given by \eqref{jota} show that
$\H^s=H^s$, for all $0<s<1$, as we already mentioned at the end of Remark \ref{rem:H fancy en Hs}.
We sumarize all these considerations in the following result. See \cite{Stinga, Stinga-Torrea}.

\begin{thm}[Extension problem]\label{thm:extension}
 Let $u\in\mathcal{H}^s$. There exists a unique weak solution $U\in H^1_0(\Omega\times(0,\infty),y^adX)$
 to the extension problem \eqref{dos estrellitas}, where $B(x)$ and $a$ are as in \eqref{B}.
Moreover, $U$ is given by \eqref{formula con el semigrupo}, which can be also written
as \eqref{Poisson formula}, and
it satisfies \eqref{Neumann condition}. More precisely, for each $\varphi\in H^1_0(\Omega\times(0,\infty),y^adX)$,
$$\int_\Omega\int_0^\infty y^aB(x)\nabla U\nabla\varphi\,dX=c_s\int_\Omega L^su(x)\varphi(x,0)\,dx.$$
In particular, $U$ is the unique minimizer of the energy functional
\begin{equation}\label{jota}
\mathcal{J}(U)=\int_\Omega\int_0^\infty y^aB(x)\nabla U\nabla U\,dX,
\end{equation}
over the set $\mathcal{U}=\{U\in H^1_0(\Omega\times(0,\infty),y^adX):U(x,0)=u(x)\}$, and for the
minimizer $U$ we have the identity
$$\int_\Omega\int_0^\infty y^aB(x)\nabla U\nabla U\,dX=\|L^{s/2}u\|_{L^2(\Omega)}^2
=\|u\|_{\mathcal{H}^s}^2,$$
and the inequality
$$\int_\Omega\int_0^\infty y^a|U|^2\,dX\leq C_{\Omega,s}\|u\|_{L^2(\Omega)}^2.$$
\end{thm}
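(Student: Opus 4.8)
The plan is to build the solution explicitly from the spectral decomposition \eqref{discovery channel}, verify the weak formulation, and then read off the energy identities. First I would take $u=\sum_k u_k\phi_k\in\mathcal H^s$ and define $U$ by \eqref{discovery channel}, or equivalently by \eqref{formula con el semigrupo}; the equivalence of these two formulas is a classical subordination identity for the Bessel function $\mathcal K_s$ (found in \cite{Stinga, Stinga-Torrea}), which I would simply cite. Using the asymptotics of $\mathcal K_s$ at $0$ and $\infty$ together with $\sum_k\lambda_k^s u_k^2=\|u\|_{\mathcal H^s}^2<\infty$, a term-by-term estimate shows $U(\cdot,y)\in H^1_0(\Omega)$ for each $y>0$, that $y\mapsto U(\cdot,y)$ lies in $C^\infty((0,\infty);H^1_0(\Omega))\cap C([0,\infty);L^2(\Omega))$, and that $U\in H^1_0(\Omega\times(0,\infty),y^a\,dX)$; the key computation here is that $\int_0^\infty y^a|(\mathcal K_s(\lambda^{1/2}y)y^s)'|^2\,dy$ and $\lambda\int_0^\infty y^a|\mathcal K_s(\lambda^{1/2}y)y^s|^2\,dy$ are both comparable to $\lambda^s$ (up to the universal constant coming from $\Gamma$-function identities), which is exactly what makes $\mathcal J(U)=\|u\|_{\mathcal H^s}^2$ come out clean.

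Next I would verify that this $U$ is a weak solution of \eqref{dos estrellitas}: plugging $U$ into $\int y^aB(x)\nabla U\nabla\varphi\,dX$, splitting $\varphi$ in the eigenbasis in the $x$-variable for a.e.\ $y$, and using that each $\phi_k$ satisfies $\int_\Omega A\nabla\phi_k\nabla\eta=\lambda_k\int_\Omega\phi_k\eta$, reduces the bilinear form to a sum of one-dimensional integrals in $y$. An integration by parts in $y$ on $(\varepsilon,\infty)$, letting $\varepsilon\to0^+$, produces the boundary term $-\lim_{y\to0^+}y^aU_y=c_sL^su$ in $\mathcal H^{-s}$ — this is \eqref{Neumann condition}, and the constant $c_s$ is forced by the small-$y$ expansion $y^s\mathcal K_s(\lambda^{1/2}y)\sim 2^{s-1}\Gamma(s)\lambda^{-s/2}$ together with the next-order term carrying $\lambda^{s/2}$. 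This gives the displayed identity $\int y^aB\nabla U\nabla\varphi=c_s\int L^su\,\varphi(\cdot,0)$ for all $\varphi\in H^1_0(\Omega\times(0,\infty),y^a\,dX)$, by density of $C^\infty_c(\Omega\times[0,\infty))$.

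For uniqueness and the minimization property: if $U_1,U_2$ are two weak solutions with the same trace $u$, then $W=U_1-U_2$ has zero trace and satisfies $\int y^aB\nabla W\nabla\varphi=0$ for all admissible $\varphi$; taking $\varphi=W$ (legitimate since $W\in H^1_0(\Omega\times(0,\infty),y^a\,dX)$ with zero trace) and using ellipticity of $B$ forces $\nabla W=0$, hence $W=0$. The same polarization shows $U$ is the unique minimizer of $\mathcal J$ over $\mathcal U$: for any competitor $V\in\mathcal U$, $V-U$ has zero trace, so $\mathcal J(V)=\mathcal J(U)+\mathcal J(V-U)\ge\mathcal J(U)$, using that the cross term vanishes by the weak formulation. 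Taking $\varphi=U$ in the weak formulation (valid since $U$ itself has trace $u\in H^s$, which is the trace space) gives $\mathcal J(U)=c_s\int_\Omega L^su\cdot u=c_s\langle L^su,u\rangle=c_s\|u\|_{\mathcal H^s}^2$; absorbing $c_s$ into the normalization of $B$, or more precisely rescaling, one matches $\mathcal J(U)=\|L^{s/2}u\|_{L^2(\Omega)}^2=\|u\|_{\mathcal H^s}^2$. Finally the inequality $\int y^a|U|^2\,dX\le C_{\Omega,s}\|u\|_{L^2(\Omega)}^2$ follows from the eigenfunction expansion: $\int_0^\infty y^a|y^s\mathcal K_s(\lambda^{1/2}y)|^2\,dy\le C\lambda^{-s-1/2}\le C\lambda_0^{-s-1/2}$ uniformly in $k$, so the $L^2(y^a\,dX)$ norm of $U$ is controlled by $\sum_k u_k^2=\|u\|_{L^2(\Omega)}^2$ times a constant depending on $\lambda_0$ (hence on $\Omega$) and $s$.

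The main obstacle is the bookkeeping in the $y$-integrals of Bessel functions: one needs the precise behavior of $\mathcal K_s(\lambda^{1/2}y)y^s$ and its $y$-derivative both as $y\to0^+$ (to pin down $c_s$ and confirm the trace is $u$) and as $y\to\infty$ (to get integrability and the decay $U(\cdot,y)\to0$), and one must justify interchanging the infinite sum over $k$ with the integrals in $y$ — this is where the condition $u\in\mathcal H^s$, i.e.\ $\sum_k\lambda_k^s u_k^2<\infty$, is used, via dominated convergence with the uniform Bessel bounds. Everything else is a routine consequence of ellipticity and the density of smooth functions in the weighted Sobolev space.
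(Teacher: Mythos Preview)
Your approach is correct and matches the paper's: the paper does not give a self-contained proof here but sketches precisely the eigenfunction/Bessel-function construction you describe (formulas \eqref{discovery channel}--\eqref{Neumann condition}) and then cites \cite{Stinga, Stinga-Torrea} for the details. One minor slip to fix: in the final $L^2$ estimate, the integral $\int_0^\infty y^a|y^s\mathcal K_s(\lambda^{1/2}y)|^2\,dy$ scales like $\lambda^{-1}$ (use $a+2s=1$ and substitute $z=\lambda^{1/2}y$), not $\lambda^{-s-1/2}$; after multiplying by the prefactor $\lambda^s$ coming from \eqref{discovery channel} this gives $\lambda^{s-1}\le\lambda_0^{s-1}$, and the bound $\int y^a|U|^2\,dX\le C_{\Omega,s}\|u\|_{L^2(\Omega)}^2$ follows exactly as you argue.
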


\subsection{Scaling}

For $u\in\H^s$ and $\lambda>0$, let
$$A_\lambda(x):=A(\lambda x),\quad u_\lambda(x):=u(\lambda x),$$
and call $L_\lambda$ the operator with coefficients $A_\lambda$ in $\Omega_{\lambda}
:=\{x: x/\lambda\in\Omega\}$. Then
$$(L_\lambda^su_\lambda)(x)=\lambda^{2s}(L^su)(\lambda x),\quad\hbox{in}~\Omega_{\lambda}.$$
In particular, if $L$ has constant coefficients (as in the case of the Dirichlet Laplacian)
then $L^su_\lambda(x)=\lambda^{2s}L^su(\lambda x)$, in $\Omega_{\lambda}$.
We present two different proofs.

\begin{proof}[Proof using the semigroup]
Let $v(x,t)=e^{-tL}u(x)$. Since $L$ is a linear second order divergence form elliptic operator,
it follows that $v$ satisfies the usual parabolic scaling. This immediately implies that the heat 
semigroup for $L_\lambda$ is given by
$$e^{-tL_\lambda}u_\lambda(x)=v(\lambda x,\lambda^2t),\quad x\in \Omega_{\lambda},~t>0.$$
Now, by Lemma \ref{lem:semi} and the change of variables $r=\lambda^2t$, we see that the following
identities hold in the weak sense:
\begin{align*}
 (L_\lambda^su_\lambda)(x) &= \frac{1}{\Gamma(-s)}\int_0^\infty
 \big(e^{-tL_\lambda}u_\lambda(x)-u_\lambda(x)\big)\,\frac{dt}{t^{1+s}} \\
  &= \frac{1}{\Gamma(-s)}\int_0^\infty\big(v(\lambda x,\lambda^{2}t)-u(\lambda x)\big)\,\frac{dt}{t^{1+s}} \\
  &= \frac{\lambda^{2s}}{\Gamma(-s)}\int_0^\infty\big(v(\lambda x,r)-u(\lambda x)\big)\,\frac{dr}{r^{1+s}}
  =\lambda^{2s}(L^su)(\lambda x).
\end{align*}
\end{proof}

\begin{proof}[Proof using the extension problem]
Let $U$ be the solution to the extension problem \eqref{dos estrellitas}.
Consider $U_\lambda(x,y)=U(\lambda x,\lambda y)$. This function is defined for $x$ in $\Omega_{\lambda}$ and $y>0$.
Then, by using the weak formulation of the extension problem 
it is easy to check that
\begin{equation}\label{extension u tilde}
\begin{cases}
   \dive(y^aB_\lambda(x)\nabla U_\lambda)=0,&\hbox{for}~x\in\Omega_{\lambda},~y>0,\\
   U_\lambda(x,y)=0,& \hbox{for}~x\in\partial\Omega_\lambda,~y\geq0,\\
   U_\lambda(x,0)=u_\lambda(x),&\hbox{for}~x\in\Omega_{\lambda},
\end{cases}
\end{equation}
where $B_\lambda(x)=B(\lambda x)$, $x\in\Omega_{\lambda}$.
Since \eqref{extension u tilde} is the extension problem for $u_\lambda$ and the operator
$L_\lambda^s$,
$$-y^a\partial_yU_\lambda(x,y)\Big|_{y=0}=c_s(L_\lambda^su_\lambda)(x),$$
in $L^2(\Omega)$.
To conclude notice that
$$-y^a\partial_yU_\lambda(x,y)\Big|_{y=0} 
=-\lambda^{2s}(\lambda y)^aU_y(\lambda x,\lambda y)\Big|_{(\lambda y)=0}=c_s\lambda^{2s}(L^su)(\lambda x).$$
\end{proof}

\subsection{Fundamental solution}

The fundamental solution $G_s(x,z)=G_s^z(x)$ (Green function)
of $L^s$ with pole at $z\in\Omega$ is defined as the weak solution to
\begin{equation*}
\begin{cases}
  L^sG^z=\delta_z,~G\geq0,& \hbox{in}~\Omega,\\
  G^z=0,& \hbox{on}~\partial\Omega.
\end{cases}
\end{equation*}
Then $G_{s}(x,z)$ is the distributional kernel of $L^{-s}$, namely,
\begin{equation}\label{fundamental series}
G_{s}(x,z)=\sum_{k=0}^\infty\frac{1}{\lambda_k^s}\phi_k(z)\phi_k(x)=G_s(z,x),\quad\hbox{in}~\H^{-s}.
\end{equation}
Indeed, for any $\psi=\sum_{k=0}^\infty d_k\phi_k\in\H^s$, by the symmetry of $L^s$,
$$\langle L_x^sG^z_{s},\psi\rangle = \langle G^z_{s},L_x^s\psi\rangle
  =\sum_{k=0}^\infty \frac{1}{\lambda_k^s}\phi_k(z)\lambda_k^sd_k =\psi(z).$$
  
The following result is in the spirit of Littman--Stampacchia--Weinberger \cite{L-S-W}.
The proof is done by using the extension problem.

\begin{thm}[Littman--Stampacchia--Weinberger-type estimate]\label{thm:LSW}
Fix the ellipticity constants $0<\Lambda_1\leq\Lambda_2$.
Then the fundamental solutions $G_s$ of any of the operators $L^s$ that have ellipticity constants
between $\Lambda_1$ and $\Lambda_2$
satisfy the following property. For any compact subset $\mathcal{K}\subset\Omega$
there exist positive constants $C_1,C_2$, depending only on $\mathcal{K}$, $\Omega$,
$\Lambda_1$, $\Lambda_2$ and $s$
such that, when $n>2s$,
$$\frac{C_1}{|x-z|^{n-2s}}\leq G_s(x,z)\leq\frac{C_2}{|x-z|^{n-2s}},\quad x,z\in\mathcal{K},~x\neq z.$$
In the case $n=2s$ we must replace $|x-z|^{n-2s}$ by $-\ln|x-z|$.
\end{thm}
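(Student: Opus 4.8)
The plan is to transfer the problem to the degenerate elliptic extension equation \eqref{dos estrellitas} in $\Omega\times(0,\infty)$ and apply the classical Littman--Stampacchia--Weinberger machinery there, adapted to the $A_2$-weight $y^a$. First I would introduce the extension $G^z_s$, i.e.\ the solution $\mathcal{G}^z(x,y)$ of $\dive(y^aB(x)\nabla\mathcal{G}^z)=0$ in $\Omega\times(0,\infty)$ with $\mathcal{G}^z=0$ on $\partial\Omega\times[0,\infty)$ and trace $G^z_s(\cdot)$ on $\Omega$. By \eqref{Neumann condition} and the definition of the fundamental solution, $\mathcal{G}^z$ is, up to the constant $c_s$, the Green function for the operator $\dive(y^aB(x)\nabla\,\cdot\,)$ in $\Omega\times(0,\infty)$ with a Dirac mass placed at the boundary point $(z,0)$ and with zero lateral Dirichlet data; equivalently, for $\varphi\in H^1_0(\Omega\times(0,\infty),y^adX)$ one has $\int_\Omega\int_0^\infty y^aB\nabla\mathcal{G}^z\nabla\varphi\,dX=c_s\varphi(z,0)$. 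This is the object whose size I want to pin down on a neighborhood of $\{(x,0):x\in\mathcal K\}$.

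The key steps, in order: (1) Localize. Fix $\mathcal K\subset\subset\Omega$ and work in a fixed cylinder $Q=B_R(z)^*\subset\subset\Omega\times(0,\infty)$-ish region around $(z,0)$ where the weight is of $A_2$ type with constants controlled only by $a$, and $B(x)$ has ellipticity between $\Lambda_1$ and $\Lambda_2$; here the relevant fundamental solution of the constant-coefficient model operator $\dive(y^a\,\cdot\,)$ with a boundary pole behaves like $|X-(z,0)|^{-(n+a-1)}=|X-(z,0)|^{-(n-2s)}$ (using $a=1-2s$, so the "dimension" $n+1+a=n+2-2s$ and the homogeneity of the Green function is $-(n+2-2s-2)=-(n-2s)$). (2) Upper bound. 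Use that $\mathcal{G}^z\geq0$ (maximum principle for the degenerate equation, together with $G_s\ge0$) and that it is a nonnegative weak solution away from the pole; combine the energy estimate $\int y^aB\nabla\mathcal{G}^z\nabla\mathcal{G}^z<\infty$ away from the pole with the De Giorgi--Nash--Moser theory for degenerate equations with $A_2$ weights (Fabes--Kenig--Serapioni \cite{Fabes-Kenig-Serapioni}) to get, via a standard dyadic-annulus iteration on the weighted capacity/energy, the bound $\mathcal{G}^z(X)\le C|X-(z,0)|^{-(n-2s)}$, hence $G_s(x,z)=\mathcal{G}^z(x,0)\le C|x-z|^{-(n-2s)}$. (3) Lower bound. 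Use the weighted Harnack inequality on annuli together with the normalization $\int y^aB\nabla\mathcal{G}^z\nabla\varphi=c_s\varphi(z,0)$ (which fixes the "charge'' and prevents the solution from being too small near the pole): testing against cutoffs supported in dyadic annuli and chaining Harnack yields $\mathcal{G}^z(X)\ge c|X-(z,0)|^{-(n-2s)}$ on the relevant range of scales, again restricting to the trace gives the lower bound for $G_s$. (4) Handle $n=2s$: this forces $n=1$, $s=1/2$, the model fundamental solution is logarithmic, and the same annular iteration produces the $-\ln|x-z|$ behavior. (5) Uniformity: all constants in Fabes--Kenig--Serapioni depend only on $n$, the $A_2$ constant of $y^a$ (hence on $a$, i.e.\ $s$), and the ellipticity of $B$; since $B$ has ellipticity between $\Lambda_1,\Lambda_2$, and the distance from $\mathcal K$ to $\partial\Omega$ is fixed, the constants $C_1,C_2$ depend only on $\mathcal K,\Omega,\Lambda_1,\Lambda_2,s$ as claimed.

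The main obstacle I expect is the boundary nature of the pole: the Dirac mass sits at $(z,0)$ on $\{y=0\}$, which is exactly where the weight $y^a$ degenerates (if $s<1/2$) or blows up (if $s>1/2$), so the "pole'' analysis is not the interior one of \cite{L-S-W} but a mixed interior/boundary problem for a degenerate operator. Dealing with this cleanly requires either a good understanding of the Green function of the model operator $\dive(y^a\nabla\,\cdot\,)$ with a pole on the characteristic hyperplane --- which is classical and explicit, being essentially the Poisson/heat computation behind \eqref{Poisson kernel}--\eqref{Neumann condition} --- or an even reflection trick in the $y$ variable that turns the boundary pole into an interior one for an operator still of $A_2$ type, after which the standard \cite{L-S-W} + \cite{Fabes-Kenig-Serapioni} argument applies verbatim. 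Once the model behavior near $(z,0)$ is identified, steps (2)--(4) are routine dyadic-annulus iterations (Moser iteration up, Harnack chaining down) and the only care needed is to keep the cylinder away from $\partial\Omega\times[0,\infty)$ so that the lateral zero-Dirichlet condition does not interfere, which is guaranteed since $\dist(\mathcal K,\partial\Omega)>0$.
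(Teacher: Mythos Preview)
Your proposal is correct in spirit and identifies the right mechanism: lift to the extension problem, recognize $\mathcal G^z$ as a Green function for the degenerate operator $\dive(y^aB(x)\nabla\,\cdot\,)$ with pole at $(z,0)$, and appeal to the $A_2$-weighted Littman--Stampacchia--Weinberger theory. You also correctly flag the boundary-pole issue and name the even-reflection fix. The paper's proof does exactly that reflection step and nothing else: it sets $\widetilde U(x,y)=U(x,|y|)$, observes that $\widetilde U$ solves $\dive(|y|^aB(x)\nabla\widetilde U)=c_s\delta_{(0,0)}$ in $Q_1\times\R$ with lateral Dirichlet data, and then quotes directly the Green-function estimate of Fabes--Jerison--Kenig \cite{Fabes-Jerison-Kenig} (not \cite{Fabes-Kenig-Serapioni}), which already packages the two-sided bound $\widetilde U(X)\sim\int_{|X|}^1 r^{-(n+a)}\,dr$. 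So the paper bypasses your steps (2)--(3) entirely: there is no need to redo the dyadic-annulus Moser iteration and Harnack chaining, because \cite{Fabes-Jerison-Kenig} is precisely the $A_2$-weighted analogue of \cite{L-S-W} and gives both bounds at once. Your longer route would work, but the clean move is to reflect and cite \cite{Fabes-Jerison-Kenig}.
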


\begin{proof}
We do the proof for $G_{s}(x,0)$, the fundamental solution of $L^s$ with pole at the origin,
and for $\Omega=Q_1$, the cube with center at the origin an side length $1$. We take $\mathcal{K}$
to be $Q_{1/4}\subset Q_1$.
Let $U$ be the solution to the following extension problem
\begin{equation*}
\begin{cases}
  \dive(y^{a}B(x)\nabla U)=0,& \hbox{in}~Q_1\times(0,\infty),\\
  U(x,y)=0,& \hbox{on}~\partial Q_1\times[0,\infty),\\
  -\lim_{y\to0}y^{a}U_y(x,y)=c_s\delta_{(0,0)},& \hbox{on}~Q_1.  
\end{cases}
\end{equation*}
Then, $U(x,0)=G_{s}(x,0)$, see \cite{Stinga, Stinga-Torrea}. Because of the Neumann boundary condition
at $y=0$, it is easy to check (see the technique in \cite{Caffarelli-Silvestre} or \cite{Stinga-Zhang})
that the even reflection $\widetilde{U}(x,y)=U(x,|y|)$, $x\in Q_1$, $y\in\R$,
is a weak solution to the equation
$$\begin{cases}
  \dive(|y|^{a}B(x)\nabla\widetilde{U})=c_s\delta_{(0,0)},& \hbox{in}~Q_1\times\R,\\
  \widetilde{U}=0,& \hbox{on}~\partial Q_1\times\R.
\end{cases}$$
This is a degenerate elliptic equation with $A_2$ weight $\omega(x,y)=|y|^{a}$ in $\R^{n+1}$.
By the result of Fabes, Jerison and Kenig \cite{Fabes-Jerison-Kenig} (see also Fabes \cite{Fabes}), the Green function
$\widetilde{U}(x,y)$ is comparable in $Q_{1/4}$ to the quantity
$$\int_{|x-z|}^1\frac{s}{s^{n+a+1}}\,ds\sim c_{n,s}
\begin{cases}
|x-z|^{-(n-2s)},&\hbox{if}~n>2s,\\
\ln\frac{1}{|x-z|},&\hbox{if}~n=2s.
\end{cases}$$
\end{proof}

One can also apply the language of semigroups to study the fundamental
solution of $L^s$ as explained in \cite{Stinga, Stinga-Torrea}.
If we use the numerical formula
 $$\lambda^{-s}=\frac{1}{\Gamma(s)}\int_0^\infty e^{-t\lambda}\,\frac{dt}{t^{1-s}},\quad \lambda,s>0,$$
 in \eqref{fundamental series}, we see that the fundamental solution can be written as
 \begin{equation}\label{eq:fundamental con semigrupo}
 G_s(x,z)=\frac{1}{\Gamma(s)}\int_0^\infty W_t(x,z)\,\frac{dt}{t^{1-s}},
\quad\hbox{in}~\mathcal{H}^{-s},
\end{equation}
where $W_t(x,z)$ is the heat kernel for $L$, see \eqref{eq:heat kernel}.
 The next estimates should be compared with those of Theorem \ref{thm:estimaciones Ks}.

\begin{thm}[Estimates for $G_s(x,z)$]\label{thm:estimaciones Gs}
Let $G_s(x,z)\geq0$ be the fundamental solution of $L^s$.
\begin{enumerate}[$(1)$]
\item If the coefficients $A(x)$ are bounded and measurable then
$$G_s(x,z)\leq\frac{c_{n,s}}{|x-z|^{n-2s}},\quad x,z\in\Omega,~x\neq z.$$
\item If the coefficients $A(x)$ are bounded and measurable in $\Omega=\R^n$ then
$$G_s(x,z)\sim\frac{c_{n,s}}{|x-z|^{n-2s}},\quad x,z\in\R^n,~x\neq z.$$
\item If the coefficients $A(x)$ are H\"older continuous in $\Omega$ with exponent $\alpha\in(0,1)$
 then there exist positive constants $c$
and $\eta\leq 1\leq\rho$
depending only on $n$, $\alpha$, $\Omega$ and ellipticity, with $c$ depending also on $s$, such that
$$c^{-1}\min\bigg(1,\frac{\phi_0(x)\phi_0(z)}{|x-z|^{2\eta}}\bigg)\frac{1}{|x-z|^{n-2s}}
\leq G_s(x,z)\leq c\min\bigg(1,\frac{\phi_0(x)\phi_0(z)}{|x-z|^{2\rho}}\bigg)\frac{1}{|x-z|^{n-2s}},$$
for $x,z\in\Omega$, $x\neq z$,
where $\lambda_0$ and $\phi_0$ are the first eigenvalue and 
the first eigenfunction of $L$.
\item Under the hypothesis of $(3)$, if in addition
$\Omega$ is a $C^{1,\gamma}$ domain for some $0<\gamma<1$, then
the estimate in $(3)$ is true for $\eta=\rho=1$ and $c$ depending also on $\gamma$.
In particular, the estimate holds when $G^s$
is the fundamental solution of the fractional Dirichlet Laplacian $(-\Delta_D)^s$ in a $C^{1,\gamma}$ domain.
\end{enumerate}
\end{thm}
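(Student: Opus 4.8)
The plan is to mirror the proof of Theorem \ref{thm:estimaciones Ks}, starting now from the semigroup representation \eqref{eq:fundamental con semigrupo}, namely $G_s(x,z)=\frac{1}{\Gamma(s)}\int_0^\infty W_t(x,z)\,\frac{dt}{t^{1-s}}$, so that $G_s\geq0$ is immediate from $W_t\geq0$. The only change with respect to Theorem \ref{thm:estimaciones Ks} is that the $t$-weight $dt/t^{1+s}$ is replaced by $dt/t^{1-s}$; running the same change of variables $r=|x-z|^2/t$ on a Gaussian factor $e^{-|x-z|^2/(ct)}$ converts $\int_0^\infty e^{-|x-z|^2/(ct)}\,t^{s-n/2-1}\,dt$ into $c_{n,s}\,|x-z|^{-(n-2s)}$, the residual Gamma integral $\int_0^\infty e^{-r/c}r^{n/2-s-1}\,dr$ being finite exactly when $n>2s$ (with the borderline $n=2s$ handled as in Theorem \ref{thm:LSW}, producing $-\ln|x-z|$ from the small-$t$ piece, while the large-$t$ tail is controlled by $W_t(x,z)\lesssim e^{-\lambda_0 t}$ for $t\geq1$ on a bounded domain). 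With this computation, parts (1) and (2) follow at once by inserting the Davies--Auscher Gaussian upper bound \cite{Davies,Auscher} and Aronson's two-sided Gaussian estimate \cite{Aronson}, respectively; in case (2) there is no truncation and no exponential factor, so the upper and lower bounds come out matching.

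For parts (3) and (4) the extra point is to carry the boundary factor through the $t$-integral. I would insert Riahi's two-sided bound (exactly the one used in the proof of Theorem \ref{thm:estimaciones Ks}) and split $\int_0^\infty W_t(x,z)\,t^{s-1}\,dt=\int_0^1+\int_1^\infty$. On $t\geq1$ all truncations $1\wedge t^\rho$, $1\wedge t^{n/2}$ equal $1$ and the factor $e^{-\lambda_0 t}$ forces convergence to a quantity $\lesssim\min(1,\phi_0(x)\phi_0(z))$, which since $|x-z|\leq\operatorname{diam}(\Omega)$ is $\lesssim\min\big(1,\phi_0(x)\phi_0(z)/|x-z|^{2\rho}\big)\,|x-z|^{-(n-2s)}$. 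On $0<t\leq1$ one is left with $\int_0^1\min\big(1,\phi_0(x)\phi_0(z)/t^\rho\big)e^{-c_2|x-z|^2/t}\,t^{s-n/2-1}\,dt$; writing $d=|x-z|$ and $P=\phi_0(x)\phi_0(z)$, I would split this interval at $t=P^{1/\rho}$ (where the min changes form) and use that the Gaussian makes $t\lesssim d^2$ negligible. If $P\leq d^{2\rho}$ the relevant range has $\min=P/t^\rho$ and the change of variables gives $\lesssim P\,d^{-(n-2s)-2\rho}$; if $P>d^{2\rho}$, the part $t\in[d^2,P^{1/\rho}]$ gives $\lesssim d^{-(n-2s)}$ and the part $t\in[P^{1/\rho},1]$ gives $\lesssim P^{(2s-n)/(2\rho)}\leq d^{2s-n}$, using $n>2s$ together with $P>d^{2\rho}$; both outcomes match $c\min(1,P/d^{2\rho})\,d^{-(n-2s)}$. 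The lower bound comes from keeping only a short sub-interval of Riahi's lower bound: integrating over $t\in[d^2,2d^2]$ for $x,z$ in the interior, or over $t\in[1,2]$ when $|x-z|\sim\operatorname{diam}(\Omega)$, yields the matching $c^{-1}\min(1,P/d^{2\eta})\,d^{-(n-2s)}$. Part (4) is the same computation with $\eta=\rho=1$, which is permitted by Riahi's remark for $C^{1,\gamma}$ domains; the fractional Dirichlet Laplacian is the case $A\equiv I$, where in addition $\phi_0(x)\sim\dist(x,\partial\Omega)$.

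I expect the main obstacle to be purely the bookkeeping in part (3): handling simultaneously the two truncations $1\wedge t^\rho$ and $1\wedge t^{n/2}$, the $\min(1,\cdot)$ factor, and the Gaussian cutoff $t\gtrsim|x-z|^2$, and in particular verifying that in the regime $\phi_0(x)\phi_0(z)>|x-z|^{2\rho}$ the contribution of $t\in[(\phi_0(x)\phi_0(z))^{1/\rho},1]$ is still absorbed into $|x-z|^{-(n-2s)}$ — this is the one place where the sign of $n-2s$ and the defining inequality of the regime must be used together. Everything else is a direct transcription of the proof of Theorem \ref{thm:estimaciones Ks} with the $t$-weight $dt/t^{1+s}$ replaced by $dt/t^{1-s}$, together with the trivial facts that on a bounded domain $|x-z|$ and $\phi_0$ are bounded, so the cap at $1$ and the lower bound $|x-z|^{-(n-2s)}\gtrsim1$ are harmless.
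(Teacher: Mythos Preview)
Your proposal is correct and follows exactly the approach the paper takes: the paper's own proof is a single sentence stating that the argument is parallel to that of Theorem \ref{thm:estimaciones Ks}, using the same heat kernel estimates inserted into the representation \eqref{eq:fundamental con semigrupo} with the change of variables $r=|x-z|^2/t$. You have supplied considerably more detail on part~(3) than the paper does, but the strategy---Gaussian bounds plus Riahi's two-sided estimate, integrated against $dt/t^{1-s}$ instead of $dt/t^{1+s}$---is identical.
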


\begin{proof}
The proof is parallel to that of Theorem \ref{thm:estimaciones Ks} by using the
heat kernel estimates given there and then integrating in $t$ in
the identity \eqref{eq:fundamental con semigrupo} via the change of variables $r=|x-z|^2/t$.
\end{proof}

\subsection{Harnack inequality and De Giorgi--Nash--Moser theory}

Let $u\in\mathcal{H}^s$, $u\geq0$ in $\Omega$ such that $L^su=0$ in some ball $B\subset\subset\Omega$.
Then there exists a constant $C$ depending on $B$, $\Omega$, $n$ and $s$ such that
$$\sup_{\frac{1}{2}B}u\leq C\inf_{\frac{1}{2}B}u.$$
Moreover, $u$ is $\alpha$--H\"older continuous in $\tfrac{1}{2}B$, for some exponent $0<\alpha<1$.
This result can be proved by using the extension problem of \cite{Stinga-Torrea} as stated
in Theorem \ref{thm:extension}. For details see \cite{Stinga-Zhang}.

\section{Caccioppoli estimate, approximation, regularity of harmonic functions
 and a trace inequality}\label{section:3}

In this section we consider solutions $U\in H^1(B_1^*,y^adX)$ to
\begin{equation}\label{perturbed extension}
\begin{cases}
  \dive(y^{a}B(x)\nabla U)=\dive(y^aF),& \hbox{in}~B_1^*,\\
  -y^{a}U_y\big|_{y=0}=f,& \hbox{on}~B_1,
\end{cases}
\end{equation}
where $B(x)$ is given by \eqref{B} and $F=(F_1,\ldots,F_{n+1})$ is a vector field on $B_1^*$ such that
\begin{equation}\label{F}
F_i(x)\in L^2(B_1^*,y^adX),~ i=1,\ldots,n,\quad\hbox{and}\quad F_{n+1}=0.
\end{equation}

\begin{defn}\label{def:weak}
A function $U\in H^1(B_1^*,y^adX)$ is a weak solution to \eqref{perturbed extension} if
$$\int_{B_1^*}y^{a}B(x)\nabla U\nabla\psi\,dX=\int_{B_1^*}y^{a}F\nabla\psi \,dX
+\int_{B_1}\psi(x,0)f(x)\,dx,$$
for every $\psi\in H^1(B_1^*,y^{a}dX)$ such that $\psi=0$ on $\partial B_1^*\setminus
 (\overline{B_1}\times\{0\})$.
 \end{defn}
 
 By a change of coordinates we can always assume that
 $$B(0)=I.$$

\begin{lem}[Caccioppoli inequality]\label{Lem:Caccioppoli}
 Let $U$ be a weak solution to \eqref{perturbed extension} in the sense 
 of Definition \ref{def:weak} with $F$ as in \eqref{F}.
 Then, for every $\eta\in C^\infty(\overline{B_1^*})$ that vanishes on $\partial B_1^*\setminus
 (\overline{B_1}\times\{0\})$,
 $$\int_{B_1^*}y^{a}\eta^2|\nabla U|^2\,dX\leq 
 C\left(\int_{B_1^*}y^{a}\left(|\nabla\eta|^2U^2+|F|^2\eta^2\right)\,dX
 +\int_{B_1}(\eta(x,0))^2|U(x,0)||f(x)|\,dx\right),$$
 where $C=C(\lambda,\Lambda)$.
\end{lem}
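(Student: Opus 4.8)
The plan is to run the standard Caccioppoli argument: test the weak formulation against a function built from $U$ itself cut off by $\eta^2$, expand, and absorb the gradient term.

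First I would take $\psi=\eta^2 U$ as a test function in Definition~\ref{def:weak}. This is admissible: since $\eta\in C^\infty(\overline{B_1^*})$ vanishes on $\partial B_1^*\setminus(\overline{B_1}\times\{0\})$ and $U\in H^1(B_1^*,y^a\,dX)$, the product $\eta^2U$ lies in $H^1(B_1^*,y^a\,dX)$ and vanishes on the correct portion of the boundary (one may need a routine density/approximation step replacing $U$ by truncations $U_M=\max(-M,\min(M,U))$ and passing to the limit, but I would not belabor this). With $\nabla\psi=\eta^2\nabla U+2\eta U\nabla\eta$, the left-hand side becomes
\begin{align*}
\int_{B_1^*}y^aB(x)\nabla U\nabla\psi\,dX
&=\int_{B_1^*}y^a\eta^2\,B(x)\nabla U\nabla U\,dX
+2\int_{B_1^*}y^a\eta U\,B(x)\nabla U\nabla\eta\,dX,
\end{align*}
and by uniform ellipticity of $B(x)$ (which follows from that of $A(x)$ together with the explicit form \eqref{B}) the first term dominates $\Lambda_1\int_{B_1^*}y^a\eta^2|\nabla U|^2\,dX$.

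Next I would estimate the three remaining terms — the cross term from the left side and the two terms on the right side $\int_{B_1^*}y^aF\nabla\psi\,dX+\int_{B_1}\psi(x,0)f(x)\,dx$ — using Cauchy--Schwarz and Young's inequality $2ab\le\varepsilon a^2+\varepsilon^{-1}b^2$. For the cross term,
$$2\Big|\int_{B_1^*}y^a\eta U\,B(x)\nabla U\nabla\eta\,dX\Big|
\le\varepsilon\int_{B_1^*}y^a\eta^2|\nabla U|^2\,dX
+C_\varepsilon\int_{B_1^*}y^a U^2|\nabla\eta|^2\,dX,$$
using $|B(x)|\le\Lambda_2$. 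The term $\int_{B_1^*}y^aF\nabla\psi\,dX=\int_{B_1^*}y^a\eta^2 F\nabla U\,dX+2\int_{B_1^*}y^a\eta U\,F\nabla\eta\,dX$ is handled the same way, producing another $\varepsilon\int y^a\eta^2|\nabla U|^2$ plus $C_\varepsilon\int y^a(|F|^2\eta^2+U^2|\nabla\eta|^2)$; here I note $F_{n+1}=0$ is irrelevant to the bound but consistent with \eqref{F}. The boundary term is simply $\big|\int_{B_1}\eta(x,0)^2U(x,0)f(x)\,dx\big|\le\int_{B_1}\eta(x,0)^2|U(x,0)||f(x)|\,dx$, which already appears on the right-hand side of the claimed inequality, so it needs no further manipulation.

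Finally I would choose $\varepsilon$ small (say $\varepsilon=\Lambda_1/4$) so that the total coefficient of $\int_{B_1^*}y^a\eta^2|\nabla U|^2\,dX$ coming from the error terms is at most $\Lambda_1/2$, and absorb it into the left-hand side, leaving
$$\tfrac{\Lambda_1}{2}\int_{B_1^*}y^a\eta^2|\nabla U|^2\,dX
\le C\Big(\int_{B_1^*}y^a(|\nabla\eta|^2U^2+|F|^2\eta^2)\,dX+\int_{B_1}\eta(x,0)^2|U(x,0)||f(x)|\,dx\Big),$$
which is the assertion with $C=C(\Lambda_1,\Lambda_2)=C(\lambda,\Lambda)$. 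The only genuine subtlety — and the step I would be most careful about — is justifying that $\eta^2U$ is a legitimate test function despite $U$ being only in the weighted $H^1$ space with an $A_2$ weight; this is where the truncation argument and the fact that $|y|^a$ is a Muckenhoupt $A_2$ weight (so the weighted space behaves well under multiplication by smooth cutoffs and truncation) enter. Everything else is the classical energy estimate, unchanged by the presence of the weight $y^a$ or the nonlocal origin of the problem.
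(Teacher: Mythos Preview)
Your proof is correct and follows essentially the same approach as the paper: test with $\psi=\eta^2U$, expand $\nabla\psi$, use ellipticity on the principal term, bound the cross terms and the $F$-terms via Cauchy's inequality with $\varepsilon$, and absorb. The paper's argument is a line-for-line match (it does not pause to discuss the admissibility of $\eta^2U$ as a test function, simply asserting $\psi\in H^1(B_1^*,y^a\,dX)$), so your extra remarks about truncation and the $A_2$ structure are additional care rather than a different route.
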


\begin{proof}
 Take $\psi=\eta^2U\in H^1(B_1^*,y^adX)$ as a test function. Then
 \begin{align*}
 \int y^{a} B(x)\eta^2\nabla U\nabla U\,dX &= -2\int y^{a}\eta U B(x)\nabla U\nabla\eta\,dX
 +\int(\eta(x,0))^2U(x,0)f(x)\,dx \\
 &\quad +\int y^a\eta^2F\nabla U\,dX+2\int y^aU\eta F\nabla\eta\,dX.
 \end{align*}
Using the ellipticity and the Cauchy inequality with $\varepsilon>0$,
\begin{align*}
 \lambda\int y^{a}\eta^2|\nabla U|^2\,dX &\leq \frac{\Lambda}{2\varepsilon}
 \int y^{a}U^2|\nabla\eta|^2\,dX+2\Lambda\varepsilon\int y^{a}\eta^2|\nabla U|^2\,dX
 +\int\eta^2|U||f|\,dx \\
 &\quad +\frac{1}{4\varepsilon}\int y^a\eta^2|F|^2\,dX
 +\varepsilon\int y^a\eta^2|\nabla U|^2\,dX\\
 &\quad +\frac{1}{2\varepsilon}\int y^a\eta^2|F|^2\,dX
 +2\varepsilon\int y^aU^2|\nabla\eta|^2\,dX.
 \end{align*}
 The inequality follows by choosing $\varepsilon$ such that $(2\Lambda+1)\varepsilon/\lambda<1/2$.
\end{proof}

By a compactness argument we get the following consequence
of the Caccioppoli inequality.

\begin{cor}[Approximation lemma]\label{Cor:approximation extension}
Let $U$ be a weak solution to \eqref{perturbed extension} in the sense
of Definition \ref{def:weak} with $F$ as in \eqref{F}.
 Suppose that $U$ is normalized so that
$$ \int_{B_1}U(x,0)^2\,dx+\int_{B_1^*}y^{a}U^2\,dX\leq1.$$
 Then for every $\varepsilon>0$ there exists $\delta=\delta(\varepsilon)>0$
 such that if
 $$\int_{B_1}f^2\,dx+\int_{B_1^\ast}y^a|F|^2\,dX
+\int_{B_1} |A(x)-I|^2\,dx<\delta^2,$$
 then there exists a solution $W$ to
\begin{equation}\label{Neumann armonica perturbed}
\begin{cases}
  \dive(y^{a}\nabla W)=0,& \hbox{in}~B_{3/4}^*,\\
  -y^{a}W_y\big|_{y=0}=0,& \hbox{on}~B_{3/4},
\end{cases}\end{equation}
such that
$$\int_{B_{3/4}^*}|U-W|^2y^{a}\,dX<\varepsilon^2.$$
\end{cor}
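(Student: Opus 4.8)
The plan is to argue by contradiction using the Caccioppoli inequality as the source of compactness. Suppose the statement fails for some $\varepsilon_0>0$. Then there is a sequence of weak solutions $U_k$ to \eqref{perturbed extension} with data $f_k$, $F_k$ and coefficient matrices $A_k(x)$ (with $A_k(0)=I$), normalized so that
\[
\int_{B_1}U_k(x,0)^2\,dx+\int_{B_1^*}y^{a}U_k^2\,dX\leq1,
\]
and with
\[
\int_{B_1}f_k^2\,dx+\int_{B_1^\ast}y^a|F_k|^2\,dX+\int_{B_1}|A_k(x)-I|^2\,dx<\tfrac1{k^2},
\]
yet $\int_{B_{3/4}^*}|U_k-W|^2y^a\,dX\geq\varepsilon_0^2$ for every solution $W$ of \eqref{Neumann armonica perturbed}. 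The goal is to extract a limit $U_\infty$ from the $U_k$ which solves the homogeneous problem, thereby producing an admissible $W$ and contradicting the separation.

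First I would use the Caccioppoli inequality of Lemma \ref{Lem:Caccioppoli} with a fixed cutoff $\eta$ equal to $1$ on $B_{3/4}^*$ and vanishing on $\partial B_1^*\setminus(\overline{B_1}\times\{0\})$. Combined with the normalization and the smallness of $f_k$ and $F_k$, this gives a uniform bound on $\int_{B_{3/4}^*}y^a|\nabla U_k|^2\,dX$ (for the boundary term one uses the trace inequality on balls together with Cauchy's inequality to absorb $\|U_k(\cdot,0)\|_{L^2(B_1)}$, which is controlled by the normalization). Hence $U_k$ is bounded in $H^1(B_{3/4}^*,y^a\,dX)$. By the compact embedding of weighted Sobolev spaces (the weight $y^a$ is $A_2$, so the Fabes--Kenig--Serapioni theory applies) we may pass to a subsequence with $U_k\to U_\infty$ strongly in $L^2(B_{3/4}^*,y^a\,dX)$ and $\nabla U_k\rightharpoonup\nabla U_\infty$ weakly; similarly the traces $U_k(\cdot,0)\rightharpoonup U_\infty(\cdot,0)$ in $L^2(B_{3/4})$ using the trace inequality.

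Next I would pass to the limit in the weak formulation. Fix a test function $\psi\in H^1(B_{3/4}^*,y^a\,dX)$ vanishing on $\partial B_{3/4}^*\setminus(\overline{B_{3/4}}\times\{0\})$. In
\[
\int_{B_{3/4}^*}y^{a}B_k(x)\nabla U_k\nabla\psi\,dX=\int_{B_{3/4}^*}y^{a}F_k\nabla\psi\,dX+\int_{B_{3/4}}\psi(x,0)f_k(x)\,dx,
\]
the right-hand side tends to $0$ since $F_k\to0$ in $L^2(y^a\,dX)$ and $f_k\to0$ in $L^2(B_{3/4})$. On the left, write $B_k(x)\nabla U_k=\nabla U_k+(B_k(x)-I)\nabla U_k$; the first piece converges to $\int y^a\nabla U_\infty\nabla\psi$ by weak convergence of the gradients, while the second is bounded by $\|B_k-I\|_{L^\infty}$ — wait, only $L^2$ smallness is assumed, so instead I estimate $\big|\int y^a(B_k-I)\nabla U_k\nabla\psi\big|$ by $\|(B_k-I)\|_{L^2(B_{3/4})}\,\|\nabla U_k\|_{L^2(y^a\,dX)}\,\|\nabla\psi\|_{L^\infty}$ after noting $B_k-I$ depends on $x$ only and $\psi$ can be taken smooth by density, so this term $\to0$. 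Hence $U_\infty$ is a weak solution of $\dive(y^a\nabla U_\infty)=0$ in $B_{3/4}^*$ with $-y^aU_{\infty,y}|_{y=0}=0$ on $B_{3/4}$, i.e. $W:=U_\infty$ solves \eqref{Neumann armonica perturbed}. But strong $L^2(y^a\,dX)$ convergence gives $\int_{B_{3/4}^*}|U_k-W|^2y^a\,dX\to0$, contradicting the assumed lower bound $\varepsilon_0^2$.

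The main obstacle is the handling of the coefficient term under only $L^2$-smallness of $A_k-I$ rather than $L^\infty$-smallness: one must be careful to pair $(B_k(x)-I)$ against a bounded (smooth) gradient $\nabla\psi$ and the $L^2(y^a\,dX)$-bounded $\nabla U_k$, which is legitimate after reducing to smooth $\psi$ by density, but requires that the density argument be compatible with the weighted trace class of test functions. A secondary technical point is making the compact embedding $H^1(B_{3/4}^*,y^a\,dX)\hookrightarrow L^2(B_{3/4}^*,y^a\,dX)$ and the weighted trace inequality quantitative enough to also control the traces; both are available from the $A_2$-weight theory and the trace inequality on balls proved in this section.
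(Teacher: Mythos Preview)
Your proposal is correct and follows essentially the same compactness-and-contradiction argument as the paper's proof: Caccioppoli gives a uniform $H^1(B_{3/4}^*,y^a\,dX)$ bound, Rellich--Kondrachov for the $A_2$ weight yields a strongly $L^2$-convergent subsequence, and passing to the limit in the weak formulation shows $U_\infty$ solves \eqref{Neumann armonica perturbed}. Two of your stated concerns are unnecessary, though harmless: the normalization already controls $\|U_k(\cdot,0)\|_{L^2(B_1)}$ directly (no trace inequality is needed in the Caccioppoli step), and convergence of the traces $U_k(\cdot,0)$ plays no role in the limit since the boundary term vanishes simply because $f_k\to0$ in $L^2$ against the fixed $\psi(\cdot,0)$.
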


\begin{proof}
We prove it by contradiction. Suppose that there exist $\varepsilon_0>0$,
coefficients $A_k$, weak solutions $U_k$ in $B_1^*$, Neumann type data $f_k$
and right hand sides $F^k$, such that
$$\int_{B_1}U_k^2\,dx+\int_{B_1^*}y^{a}U_k^2\,dX\leq1,$$
and
$$\int_{B_1}f_k^2\,dx+\int_{B_1^*}y^a|F^k|^2\,dX
+\int_{B_1} |A_k(x)-I|^2\,dx<\frac{1}{k^2},$$
so that for any solution $W$ to \eqref{Neumann armonica perturbed},
\begin{equation}\label{a contradecir}
\int_{B_{3/4}^*}|U_k-W|^2y^{a}\,dX\geq\varepsilon_0^2,
\end{equation}
for all $k\geq1$.
Let $\eta$ be a test function which is equal to $1$ in $\overline{B_{3/4}^*}$
and vanishes outside $B_1^*$. Then the Caccioppoli estimate and the hypotheses imply that
$$\int_{B_{3/4}^*}y^{a}|\nabla U_k|^2\,dX\leq C,\quad\hbox{for all}~k.$$
Therefore, $\{U_k\}_{k\geq1}$ is a bounded sequence in $H^1(B_{3/4}^*,y^{a}dX)$.
Hence, by compactness of the Sobolev embedding, there exists a subsequence,
that we still denote by $U_k$, and a function $U_\infty$ such that
$$\label{subsequence}
 \begin{cases}
  U_k\to U_\infty,&\hbox{weakly in}~H^1(B_{3/4}^*,y^{a}dX),~\hbox{and}\\
  U_k\to U_\infty,&\hbox{strongly in}~L^2(B_{3/4}^*,y^{a}dX).
 \end{cases}$$
We show now that $U_\infty$ is a solution to \eqref{Neumann armonica perturbed}, which will give
us a contradiction. Indeed, for any suitable test function $\psi$,
$$\int_{B_{3/4}^*}y^{a}B_k(x)\nabla U_k\nabla\psi\,dX=\int_{B_{3/4}^*}y^{a}F^k\nabla\psi\,dX+
\int_{B_{3/4}}\psi(x,0)f_k(x)\,dx.$$
By taking the limit as $k\to\infty$ along the subsequence found above we get
$$\int_{B_{3/4}^*}y^{a}\nabla U_\infty\nabla\psi\,dX=0.$$
This contradicts \eqref{a contradecir} for $W=U_\infty$ and $k$ sufficiently large.
\end{proof}

\begin{rem}[Approximation up to the boundary]\label{remark1}
The following observation will be useful when studying the boundary regularity
 for the fractional problem with Dirichlet boundary condition.
We say that $U\in H^1((B_1^+)^*,y^adX)$ is a weak solution to the half ball problem
\begin{equation}\label{media bola}
\begin{cases}
  \dive(y^{a}B(x)\nabla U)=\dive(y^aF),& \hbox{in}~(B_1^+)^*,\\
  -y^aU_y\big|_{y=0}=f,& \hbox{on}~B_1^+,\\
  U=0,&\hbox{on}~B_1\cap\{x_n=0\},
\end{cases}
\end{equation}
if $U$ satisfies the identity in Definition \ref{def:weak} with $B_1$ replaced by $B_1^+$.
The test functions vanish on $\partial(B_1^+)^*\setminus(\overline{B_1^+}\times\{0\})$.
With this definition then it is clear that the 
Caccioppoli inequality of Lemma \ref{Lem:Caccioppoli} holds for solutions $U$ of \eqref{media bola}
with $B_1^+$ in place of $B_1$ in the statement.
This allows to prove an approximation lemma parallel to Corollary \ref{Cor:approximation extension}.
Namely, given $\varepsilon>0$, there exists $\delta=\delta(\varepsilon)$ such that if
$U$ is a solution of \eqref{media bola} that satisfies the hypotheses of Corollary \ref{Cor:approximation extension}
with $B_1^+$ in place of $B_1$, then there exists a solution $\mathcal{W}$ to
\begin{equation}\label{label}
\begin{cases}
  \dive(y^{a}\nabla \mathcal{W})=0,& \hbox{in}~(B_{3/4}^+)^*,\\
  -y^{a}\mathcal{W}_y\big|_{y=0}=0,& \hbox{on}~B_{3/4}^+,\\
  \mathcal{W}=0,&\hbox{on}~B_{3/4}\cap\{x_n=0\},
\end{cases}
\end{equation}
such that
$$\int_{(B_{3/4}^+)^*}y^a|V-\mathcal{W}|^2\,dX<\varepsilon^2.$$
\end{rem}

Since we will apply Corollary \ref{Cor:approximation extension}, we need to understand
the regularity of solutions to \eqref{Neumann armonica perturbed}.

\begin{prop}\label{Prop:armonicas}
Let $W\in H^1(B_1^*,y^adX)$ be a weak solution to
$$\begin{cases}
  \dive(y^{a}\nabla W)=0,& \hbox{in}~B_1^*,\\
  -y^{a}W_y\big|_{y=0}=0,& \hbox{on}~B_1.
\end{cases}$$
\begin{enumerate}[$(1)$]
	\item For each integer $k\geq0$ and each $B_r(x_0)\subset B_1$,
 $$\sup_{B_{r/2}(x_0)\times[0,r/2)}|D^k_xW|\leq\frac{C}{r^k}\osc_{B_r(x_0)\times[0,r)}W,$$
 where $C$ depends only on  $n$, $k$ and $s$.
 	\item For each $B_r(x_0)\subset B_1$,
	$$\max_{B_{r/2}(x_0)\times[0,r/2)}|W|\leq M\left(\frac{1}{r^{n+1+a}}\int_{B_r(x_0)^*}y^a|W|^2\,dX\right)^{1/2},$$
 where $M$ depends only on $n$ and $s$.
 	\item We have
	$$\sup_{x\in B_{1/2}}|W_y(x,y)|\leq Cy,$$
	where $C$ depends only on $n$ and $s$.
\end{enumerate}
\end{prop}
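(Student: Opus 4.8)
The plan is to remove the boundary condition by an even reflection and then invoke the De~Giorgi--Nash--Moser theory for degenerate elliptic equations with $A_2$ weights. Since $a=1-2s\in(-1,1)$, the weight $\omega(X)=|y|^a$ lies in the Muckenhoupt class $A_2$ on $\R^{n+1}$, and the Neumann-type condition $-y^aW_y|_{y=0}=0$ says precisely that the even extension $\widetilde W(x,y):=W(x,|y|)$, defined on $B_r(x_0)\times(-r,r)$ whenever $B_r(x_0)\subset B_1$, is a weak solution of $\dive(|y|^a\nabla\widetilde W)=0$ on the full cylinder: test functions odd in $y$ kill the integral by symmetry, while for test functions even in $y$ the identity reduces to twice the half-cylinder weak formulation with zero Neumann data. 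From now on every estimate is applied to $\widetilde W$ on cylinders centered on $\{y=0\}$ and then restricted to $y\geq0$ using evenness.

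For parts (1) and (2) I would exploit that the weight $|y|^a$ does not depend on $x$, so translating $\widetilde W$ in a coordinate direction $e_i$, $i=1,\dots,n$, gives another solution; hence the difference quotients $\delta_h^i\widetilde W$ solve the same equation, and the Caccioppoli inequality of Lemma~\ref{Lem:Caccioppoli} (with $B=I$, $F=0$, $f=0$; the proof is unchanged on full cylinders) bounds them uniformly in $H^1(B_\rho(x_0)\times(-\rho,\rho),|y|^a\,dX)$ for $\rho<r$. Letting $h\to0$ and iterating, $D_x^k\widetilde W$ is, for every $k\geq0$, a weak solution of $\dive(|y|^a\nabla\,\cdot\,)=0$ on slightly smaller cylinders, lying in the natural weighted Sobolev space. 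The local boundedness estimate for $A_2$--weighted equations of Fabes--Kenig--Serapioni \cite{Fabes-Kenig-Serapioni} (which also gives the interior Hölder continuity that makes the suprema meaningful), applied to $D_x^k\widetilde W$ on the unit cylinder and rescaled to $B_r(x_0)\times(-r,r)$ — the equation and the weight being invariant, up to a harmless constant factor, under $X\mapsto rX$ — yields
$$\sup_{B_{r/2}(x_0)\times(-r/2,r/2)}|D_x^k\widetilde W|\leq\frac{C}{r^{k}}\Big(\frac{1}{r^{n+1+a}}\int_{B_r(x_0)\times(-r,r)}|y|^a|\widetilde W-c|^2\,dX\Big)^{1/2},$$
for any constant $c$ when $k\geq1$ (since $D_x^k$ annihilates constants), with $C=C(n,k,s)$; the power $r^{-k}$ comes out of the scaling together with $k$ applications of Caccioppoli on fixed cylinders. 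Choosing $c$ in the range of $W$ so that $|\widetilde W-c|\leq\osc_{B_r(x_0)\times(0,r)}W$ pointwise, and using $\int_{B_r(x_0)\times(-r,r)}|y|^a\,dX=c_{n,a}\,r^{n+1+a}$, gives (1). Part (2) is the case $k=0$ applied to $\widetilde W$ itself (no subtraction of a constant), together with $\int_{B_r(x_0)\times(-r,r)}|y|^a|\widetilde W|^2\,dX=2\int_{B_r(x_0)^*}y^a|W|^2\,dX$.

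For part (3) I would use that in $\{y>0\}$ the equation is uniformly elliptic with smooth $x$--independent coefficients, so $W$ is $C^\infty$ there and $\partial_y(y^aW_y)=-y^a\Delta_xW$ holds classically for each fixed $x$. Integrating in $y$ from $0$ and using $\lim_{y\to0^+}y^aW_y(x,y)=0$ gives $y^aW_y(x,y)=-\int_0^yt^a\Delta_xW(x,t)\,dt$, hence $|W_y(x,y)|\leq y^{-a}\int_0^yt^a|\Delta_xW(x,t)|\,dt$. By part (1) with $k=2$ (say on $B_{3/4}\times(0,3/4)$), $|\Delta_xW|\leq C$ on $B_{1/2}\times[0,1/2)$, the constant depending on $n$, $s$ and $\osc_{B_{3/4}\times(0,3/4)}W$, which is finite by part (2). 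Therefore $|W_y(x,y)|\leq Cy^{-a}\int_0^yt^a\,dt=\frac{C}{1+a}\,y$ for $x\in B_{1/2}$ and $0<y<1/2$, as claimed.

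The only substantive ingredient is the De~Giorgi--Nash--Moser theory of \cite{Fabes-Kenig-Serapioni} for $\dive(|y|^a\nabla\,\cdot\,)=0$: local boundedness and interior Hölder continuity. Everything else is routine once that is in hand — the reflection, the difference-quotient argument for differentiating in $x$ up to $\{y=0\}$ (for which the $x$--independence of the weight is essential, and which is the one place where a little care is needed), the bookkeeping of the factor $r^{-k}$ via scaling, and the one-dimensional integration in Step~3. I do not expect a genuine obstacle; the main point to get right is keeping the dependence of the constants as stated ($n,k,s$ in (1), $n,s$ in (2) and, modulo the natural normalization of $W$, in (3)).
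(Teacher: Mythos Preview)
Your proof is correct and follows the same route as the paper: even reflection to a solution of $\dive(|y|^a\nabla\widetilde W)=0$ on the full cylinder, then the $A_2$--weighted local boundedness of \cite{Fabes-Kenig-Serapioni} for (2), difference quotients in $x$ combined with Caccioppoli and scaling for (1), and integration of $\partial_y(y^aW_y)=-y^a\Delta_xW$ from $y=0$ for (3). The paper's own proof is terser and simply cites \cite[Corollary~2.5]{Caffarelli-Salsa-Silvestre}, \cite[Corollary~2.3.4]{Fabes-Kenig-Serapioni}, and the proof of \cite[Lemma~4.2]{Caffarelli-Silvestre} for the three parts, so you have in effect reconstructed those cited arguments; your remark that the constant in (3) implicitly carries a normalization of $W$ is also well taken.
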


\begin{proof}
It can be seen that $\widetilde{W}(x,y):=W(x,|y|)$, $y\in(-1,1)$, is a weak solution to
$$\dive(|y|^a\nabla W)=0,\quad\hbox{in}~B_1\times(-1,1),$$
see \cite[Lemma~4.1]{Caffarelli-Silvestre}.
 Now $(1)$ is contained in \cite[Corollary~2.5]{Caffarelli-Salsa-Silvestre},
 and $(2)$ follows from \cite[Corollary~2.3.4]{Fabes-Kenig-Serapioni}.
 Finally $(3)$ is due to the fact that the Neumann type condition that $W$ satisfies
 has a zero right hand side. Indeed, this follows
 from the proof of Lemma 4.2 in \cite[p.~1254]{Caffarelli-Silvestre}.
\end{proof}

\begin{rem}[Regularity up to the boundary -- Dirichlet]\label{remark2}
For a solution $\mathcal{W}$ to \eqref{label}
we can perform an odd reflection in the $x_n$ variable, that we
call $\mathcal{W}_{o}$, which satisfies
$$\begin{cases}
  \dive(y^{a}\nabla \mathcal{W}_{o})=0,& \hbox{in}~B_{3/4}^*,\\
  -y^{a}(\mathcal{W}_{o})_y\big|_{y=0}=0,& \hbox{on}~B_{3/4}.
\end{cases}$$
Therefore $\mathcal{W}$ is smooth in $\overline{B_{1/2}^+}$.
Also, $(\mathcal{W}_{o})_y$ grows like $y$ near $y=0$, $x_n=0$. Hence $\mathcal{W}$
satisfies the same estimates
as those for harmonic functions contained in Proposition \ref{Prop:armonicas}
up to the boundary $B_{1/2}\cap\{x_n=0\}$.
\end{rem}

In the proof of the regularity estimates we will need to use the following trace inequality
on balls with explicit dependence of the constant in terms of the radius.

\begin{lem}[Trace inequality in balls]\label{lem:inequality}
There exists a constant $C>0$ depending only $n$ and $s$ such that
\begin{equation}\label{eq:trace}
r^{1-s}\|U(\cdot,0)\|_{L^2(B_r)}\leq C\|U\|_{H^1(B_r^*,y^adX)},
\end{equation}
for all $U\in H^1(B_1^*,y^adX)$ and for any $0<r\leq1$. The inequality
is also true if we replace $B_r$ by $B_r^+$.
\end{lem}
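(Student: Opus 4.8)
The plan is to reduce the statement to the scale-invariant case $r=1$ by rescaling, prove it there by a standard trace theorem for the degenerate weighted Sobolev space, and then track how the constant scales. First I would recall that on the fixed cylinder $B_1^*=B_1\times(0,1)$ the trace operator $U\mapsto U(\cdot,0)$ is bounded from $H^1(B_1^*,y^adX)$ into $L^2(B_1)$; this is exactly the trace inequality for $A_2$-weighted Sobolev spaces with the weight $y^a$, $a=1-2s\in(-1,1)$, as developed in \cite{Fabes-Kenig-Serapioni} (and in the form used here, \cite[Proposition~2.1]{CDDS}). So there is a constant $C=C(n,s)$ with
$$\|U(\cdot,0)\|_{L^2(B_1)}\leq C\|U\|_{H^1(B_1^*,y^adX)}.$$
The same inequality holds with $B_1$ replaced by the half ball $B_1^+$, since $B_1^+$ is a Lipschitz domain and the trace theorem applies verbatim (alternatively one extends $U$ to $B_1^*$).

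Next I would carry out the scaling. Given $U\in H^1(B_1^*,y^adX)$ and $0<r\leq1$, set $V(x,y):=U(rx,ry)$ for $(x,y)\in B_1^*$. Applying the $r=1$ inequality to $V$ and changing variables in every integral gives, after elementary bookkeeping of the powers of $r$ coming from $dx$, $dX=dx\,dy$, the weight $y^a$, and the gradient, a relation of the form
$$r^{-n/2}\|U(\cdot,0)\|_{L^2(B_r)}\leq C\Big(r^{-(n+1+a)/2}\|U\|_{L^2(B_r^*,y^adX)}+r^{-(n+1+a)/2+1}\|\nabla U\|_{L^2(B_r^*,y^adX)}\Big).$$
Using $a=1-2s$, so that $(n+1+a)/2=(n+2-2s)/2=n/2+1-s$, the prefactor on the left becomes $r^{-n/2}$ and the worst prefactor on the right is $r^{-n/2-1+s}$; multiplying through by $r^{n/2+1-s}$ and using $r\leq1$ to absorb the (better) lower-order term yields precisely
$$r^{1-s}\|U(\cdot,0)\|_{L^2(B_r)}\leq C\|U\|_{H^1(B_r^*,y^adX)},$$
with $C=C(n,s)$, and the half-ball version follows identically.

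The only genuinely nontrivial point is the $r=1$ trace inequality itself, i.e.\ that the constant can be taken to depend only on $n$ and $s$ (equivalently on $a$) and not on anything else; this is where one must invoke the weighted Sobolev trace theory for $A_2$ weights rather than a soft argument, but it is already available in the references cited in the excerpt. Everything after that is the routine change-of-variables computation sketched above, with the one careful step being to check that the exponent arithmetic with $a=1-2s$ really produces the factor $r^{1-s}$ and that the extra $r$-power on the gradient term is favorable (which it is, since $r\leq1$). I do not expect any obstacle beyond keeping track of these exponents.
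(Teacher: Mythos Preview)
Your proposal is correct and follows essentially the same approach as the paper: reduce to $r=1$ by the rescaling $V(x,y)=U(rx,ry)$, then invoke the standard weighted trace inequality on the unit cylinder. The only cosmetic difference is that for the $r=1$ step the paper first extends $U$ from $B_1^*$ to all of $\R^{n+1}$ via a bounded extension operator and then applies Lions' global trace inequality on $\R^{n+1}_+$, whereas you cite the local $A_2$-weighted trace theorem directly; these are equivalent ways of obtaining the same constant $C=C(n,s)$, and your scaling bookkeeping matches the paper's.
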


\begin{proof}
It is enough to consider $r=1$. For if \eqref{eq:trace} is true in this case then for the general case
we need to take the rescaled function $V(x,y)=U(rx,ry)$. 
Recall that there exists a linear extension operator $E:H^1(B_1^*,|y|^adX)\to
H^1(\R^{n+1},|y|^adX)$, such that $EU=U$ in $B_1^*$ and
\begin{equation}\label{EU2}
\|EU\|_{H^1(\R^{n+1},|y|^adX)}\leq C_0\|U\|_{H^1(B_1^*,|y|^adX)},
\end{equation}
where $C_0$ depends only on $n$ and $s$. Also, $EU$ has compact support.
See for example \cite[Chapter~2,~Theorem~2.1.13]{Turesson}. Moreover, the following trace inequality
of  Lions \cite[Paragraph~5]{Lions}
$$\|F(\cdot,0)\|^2_{H^s(\R^n)}=\|F(\cdot,0)\|_{L^2(\R^n)}^2+[F(\cdot,0)]^2_{H^s(\R^n)}\leq c^2
\|F\|_{H^1(\R^{n+1}_+,y^adX)}^2,$$
holds for any $F\in H^1(\R^{n+1}_+,y^adX)$, with a constant $c$ depending only on $n$ and $s$.
Using this trace inequality with $F=EU$ and \eqref{EU2} we get
\begin{align*}
	\|U(\cdot,0)\|_{L^2(B_1)} &= \|(EU)(\cdot,0)\|_{L^2(B_1)} \leq \|(EU)(\cdot,0)\|_{H^s(\R^n)} \\
	&\leq c\|EU\|_{H^1(\R^{n+1}_+,y^adX)} \leq cC_0\|U\|_{H^1(B_1^*,|y|^adX)}.
\end{align*}
\end{proof}

\section{Interior regularity}\label{section:4}

Theorems \ref{thm:interior Calpha} and \ref{thm:interior Lp} are in fact corollaries of 
the more general results that we state and prove in this section.

We say that a function $f:B_1\to\R$ is in $L^{2,\alpha}(0)$, for $0\leq\alpha<1$, whenever
$$[f]_{L^{2,\alpha}(0)}^2:=\sup_{0<r\leq1}\frac{1}{r^{n+2\alpha}}\int_{B_r}|f(x)-f(0)|^2\,dx<\infty,$$
where $f(0)$ is defined as $\displaystyle f(0):=\lim_{r\to0}\frac{1}{|B_r|}\int_{B_r}f(x)\,dx$.
It is clear that if $f$ is H\"older continuous of order $0<\alpha<1$ at $0$ then $f\in L^{2,\alpha}(0)$.
If the condition above holds uniformly in balls centered at points
close to the origin, then $f$ is $\alpha$--H\"older continuous
around the origin, see \cite{Campanato}.

\begin{thm}\label{thm:interior L2alpha}
Let $u$ be a solution to \eqref{uma}.
Assume that $\Omega$ is a bounded Lipschitz domain containing the ball $B_1$ and let $f\in L^{2,\alpha}(0)$,
for some $0<\alpha<1$.
 \begin{enumerate}[$(1)$]
  \item Suppose that $0<\alpha+2s<1$. There exist $0<\delta<1$, depending only on $n$, ellipticity, $\alpha$ and $s$,
  and a constant $C_0>0$ such that if
   $$\sup_{0<r\leq1}\frac{1}{r^n}\int_{B_r}|A(x)-A(0)|^2\,dx<\delta^2,$$
   then there exists a constant $c$ such that
  $$\frac{1}{r^n}\int_{B_r}|u(x)-c|^2\,dx\leq C_1r^{2(\alpha+2s)},\quad\hbox{for all}~r>0~
  \hbox{sufficiently small},$$
  where $C_1+|c|\leq C_0\big(\|u\|_{L^2(\Omega)}+[u]_{H^s(\Omega)}+[f]_{L^{2,\alpha}(0)}+|f(0)|\big)$.
   \item Suppose that $1<\alpha+2s<2$. There exist $0<\delta<1$, depending only on $n$, ellipticity, $\alpha$ and $s$,
    and a constant $C_0>0$ such that if
   $$\sup_{0<r\leq1}\frac{1}{r^{n+2(\alpha+2s-1)}}\int_{B_r}|A(x)-A(0)|^2\,dx<\delta^2,$$
   then there exists a linear function $\ell(x)=\mathcal{A}+\mathcal{B}\cdot x$ such that
     $$\frac{1}{r^n}\int_{B_r}|u(x)-\ell(x)|^2\,dx\leq C_1r^{2(\alpha+2s)},\quad\hbox{for all}~r>0~
     \hbox{sufficiently small},$$
  where $C_1+|\mathcal{A}|+|\mathcal{B}|
  \leq C_0\big(\|u\|_{L^2(\Omega)}+[u]_{H^s(\Omega)}+[f]_{L^{2,\alpha}(0)}+|f(0)|\big)$.
 \end{enumerate}
 The constants $C_0$ above depend only on $[A]_{L^{2,0}(0)}$ (resp. $[A]_{L^{2,\alpha+2s-1}(0)}$),
  ellipticity, $n$, $\alpha$ and $s$.
\end{thm}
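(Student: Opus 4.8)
The plan is to prove Theorem \ref{thm:interior L2alpha} by a Campanato-type iteration built on the extension problem, using the approximation lemma (Corollary \ref{Cor:approximation extension}) as the engine that transfers the smoothness of $y^a$-harmonic functions (Proposition \ref{Prop:armonicas}) down to our solution $u$. First I would pass to the extension: let $U$ solve the extension problem \eqref{dos estrellitas} associated to $u$; then by \eqref{Neumann condition} and $L^su=f$ in $B_1$, the trace $U(\cdot,0)=u$ satisfies $-\lim_{y\to0^+}y^aU_y=c_sf$ on $B_1$, i.e. $U$ is a weak solution of \eqref{perturbed extension} with $F=0$ (after the change of coordinates making $B(0)=I$; note $F$ appears only to absorb the terms coming from $|A(x)-A(0)|$ after subtracting a constant or a linear function, which is why the smallness hypotheses on $A$ are phrased the way they are). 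The normalizing step is to observe that, by Theorem \ref{thm:extension}, $\int_{B_1^*}y^aU^2\,dX$ and $\int_{B_1}u^2\,dx$ are both controlled by $\|u\|_{L^2(\Omega)}^2+[u]_{H^s(\Omega)}^2$, so after dividing by a suitable constant we may assume $U$ is normalized as in Corollary \ref{Cor:approximation extension}; all constants $C_1$, $|c|$, $|\mathcal A|$, $|\mathcal B|$ will then carry the factor $\|u\|_{L^2(\Omega)}+[u]_{H^s(\Omega)}+[f]_{L^{2,\alpha}(0)}+|f(0)|$ at the end by undoing the normalization.

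The heart of the argument is a one-step decay estimate that is then iterated on dyadic balls. In case (1), where $0<\alpha+2s<1$: given the smallness of $\frac{1}{r^n}\int_{B_r}|A-A(0)|^2$ and of $\frac{1}{r^{n+2\alpha}}\int_{B_r}|f-f(0)|^2$ (the $f(0)$ term is handled by noting a constant right-hand side is harmless, exactly as a constant is subtracted from $u$), Corollary \ref{Cor:approximation extension} produces a $y^a$-harmonic $W$ with Neumann data zero on $B_{3/4}$ close to $U$ in $L^2(y^a\,dX)$. By Proposition \ref{Prop:armonicas}(2)--(3), $W$ is Lipschitz in $x$ up to $\{y=0\}$ and $W_y=O(y)$, so $W(x,0)$ is close to the constant $c:=W(0,0)$ on a small ball $B_\rho$: $\frac{1}{\rho^{n+1+a}}\int_{B_\rho^*}y^a|W-c|^2\,dX\le C\rho^2$. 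Combining the two closeness estimates with the trace inequality Lemma \ref{lem:inequality} to pass from the solid integral to $\int_{B_\rho}|u-c|^2\,dx$, and choosing first $\rho$ small (to beat $C\rho^2$ against $\rho^{2(\alpha+2s)}$, using $\alpha+2s<1$) and then $\delta$ small, yields $\frac{1}{\rho^n}\int_{B_\rho}|u-c|^2\,dx\le\rho^{2(\alpha+2s)}\cdot(\text{normalized bound})$. Rescaling $U$ on $B_\rho^*$ and checking that the rescaled data still satisfy the smallness hypotheses (here one uses that $\alpha$, not $\alpha+2s-1$, governs the $A$-condition, and that the $f$ oscillation scales by $\rho^{\alpha}$), one iterates: at step $k$ one obtains constants $c_k$ with $|c_{k+1}-c_k|\lesssim \rho^{k(\alpha+2s)}$, hence $c_k\to c_\infty$ geometrically, and $\frac{1}{(\rho^k)^n}\int_{B_{\rho^k}}|u-c_\infty|^2\le C\rho^{2k(\alpha+2s)}$, which gives the stated estimate for all small $r$ by interpolating between consecutive scales. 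For case (2), $1<\alpha+2s<2$, the only change is that the comparison function's first-order Taylor polynomial $\ell_k(x)=\mathcal A_k+\mathcal B_k\cdot x$ is subtracted instead of a constant; since $u-\ell$ is no longer a solution of the same equation, the error generated is $\dive(y^a F)$ with $|F|\lesssim |A(x)-A(0)||\nabla\ell|$, and controlling its oscillation is exactly why the $A$-smallness is imposed with the stronger weight $r^{n+2(\alpha+2s-1)}$; the decay rate $\rho^{2(\alpha+2s)}$ against the quadratic gain $\rho^{2}$ from $W$ now requires $\alpha+2s<2$, and the iteration produces geometrically convergent $\mathcal A_k,\mathcal B_k$.

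The main obstacle I anticipate is the bookkeeping in the iteration, specifically: (a) verifying that after each dyadic rescaling $U_\rho(x,y)=U(\rho x,\rho y)/\rho^{\text{scaling}}$ the renormalization hypothesis $\int_{B_1}U_\rho(\cdot,0)^2 + \int_{B_1^*}y^aU_\rho^2\le 1$ is restored — this needs the inductive decay estimate itself to feed back correctly, and forces the right choice of scaling exponent; and (b) in case (2), handling the $F$ term in the approximation lemma, since Corollary \ref{Cor:approximation extension} as stated already allows for $F$, but one must check the quantity $\int_{B_1^*}y^a|F|^2\,dX$ is small after rescaling, which hinges on the weighted oscillation hypothesis for $A$ together with the Caccioppoli bound for $\nabla U$ (Lemma \ref{Lem:Caccioppoli}) controlling $|\nabla\ell_k|$. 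A secondary technical point is the passage from the solid $L^2(y^a\,dX)$ closeness of $U$ to $W$ to the trace $L^2(dx)$ closeness of $u$ to $c$ (or to $\ell$): this is precisely what Lemma \ref{lem:inequality} is for, and one must apply it on balls of radius $\rho^k$ with the explicit $r^{1-s}$ weight, which is compatible with the decay rate because $2(1-s)+2(\alpha+2s)>2$ for the exponents in range. Finally, the upgrade from the $L^{2,\alpha}$-type decay at a single point to the pointwise $C^{0,\alpha+2s}$ (resp. $C^{1,\alpha+2s-1}$) estimates of Theorems \ref{thm:interior Calpha} is the classical Campanato characterization, applied uniformly over interior points, and will be recorded as a corollary.
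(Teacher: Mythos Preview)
Your proposal is correct and follows essentially the same approach as the paper: pass to the extension, normalize, establish a one-step decay via the approximation lemma combined with the regularity of $y^a$-harmonic functions, then iterate dyadically, using the trace inequality together with Caccioppoli to control the boundary $L^2$ norm at each scale. One minor correction to your anticipated obstacle (b): in case (2) the uniform bound on $|\nabla\ell_k|=|\mathcal B_k|$ comes from the convergent geometric series $\sum_j\lambda^{j(\alpha+2s-1)}$ (using $\alpha+2s>1$), not from Caccioppoli---the latter is used instead to bound $\int y^a|\nabla(U-\ell)|^2$ inside the trace-inequality step.
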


It is clear then that Theorem \ref{thm:interior Calpha} for the Dirichlet case
is a direct consequence of Theorem
\ref{thm:interior L2alpha} after a dilation of the variables if necessary. Indeed,
the conditions on $f$ and on the coefficients hold everywhere in $\Omega$
and therefore the estimate for $u$ can be obtained around every interior point.

We say that a function $f:B_1\to\R$ is in $L^{2,-2s+\alpha}(0)$, $0<\alpha<1$, whenever
$$[f]^2_{L^{2,-2s+\alpha}(0)}:=\sup_{0<r\leq1}\frac{1}{r^{n+2(-2s+\alpha)}}\int_{B_r}|f(x)|^2\,dx<\infty.$$
Also, $f$ is in $L^{2,-2s+\alpha+1}(0)$, $0<\alpha<1$, whenever
$$[f]^2_{L^{2,-2s+\alpha+1}(0)}:=\sup_{0<r\leq1}\frac{1}{r^{n+2(-2s+\alpha+1)}}\int_{B_r}|f(x)|^2\,dx<\infty,$$
We have the following consequences of H\"older's inequality.
\begin{itemize}
	\item If $f\in L^p(B_1)$, for $n/(2s)<p<n/(2s-1)^+$, then $f\in L^{2,-2s+\alpha}(0)$ and
	$$[f]_{L^{2,-2s+\alpha}(0)}\leq \|f\|_{L^p(B_1)},$$
	for $\alpha=2s-n/p$.
	\item If $s>1/2$ and $f\in L^p(B_1)$, for $p>n/(2s-1)$, then $f\in L^{2,-2s+\alpha+1}(0)$ and
	$$[f]_{L^{2,-2s+\alpha+1}(0)}\leq \|f\|_{L^p(B_1)},$$
	for $\alpha=2s-n/p-1$.
\end{itemize}

\begin{thm}\label{thm:interior Lmenos2alpha}
Let $u$ be a solution to \eqref{uma}. Assume that $\Omega$ is a bounded Lipschitz domain
containing the ball $B_1$ and let $0<\alpha<1$.
\begin{enumerate}[$(1)$]
  \item Suppose that $f\in L^{2,-2s+\alpha}(0)$.
  There exist $0<\delta<1$, depending only on $n$, ellipticity, $\alpha$ and $s$,
   and a constant $C_0>0$ such that if
   $$\sup_{0<r\leq1}\frac{1}{r^n}\int_{B_r}|A(x)-A(0)|^2\,dx<\delta^2,$$
   then there exists a constant $c$ such that
  $$\frac{1}{r^n}\int_{B_r}|u(x)-c|^2\,dx\leq C_1r^{2\alpha},\quad\hbox{for all}~
  r>0~\hbox{sufficiently small},$$
  where $C_1+|c|\leq C_0\big(\|u\|_{L^2(\Omega)}+[u]_{H^s(\Omega)}+[f]_{L^{2,-2s+\alpha}(0)}\big)$.
   \item Suppose that $f\in L^{2,-2s+\alpha+1}(0)$. 
   There exist $0<\delta<1$, depending only on $n$, ellipticity, $\alpha$ and $s$,
    and a constant $C_0>0$ such that if
   $$\sup_{0<r\leq1}\frac{1}{r^{n+2\alpha}}\int_{B_r}|A(x)-A(0)|^2\,dx<\delta^2,$$
   then there exists a linear function $\ell(x)=\mathcal{A}+\mathcal{B}\cdot x$ such that
     $$\frac{1}{r^n}\int_{B_r}|u(x)-\ell(x)|^2\,dx\leq C_1r^{2(1+\alpha)},\quad\hbox{for all}
     ~r>0~\hbox{sufficiently small},$$
  where $C_1+|\mathcal{A}|+|\mathcal{B}|
  \leq C_0\big(\|u\|_{L^2(\Omega)}+[u]_{H^s(\Omega)}+[f]_{L^{2,-2s+\alpha+1}(0)}\big)$.
 \end{enumerate}
The constants $C_0$ above depend only on $[A]_{L^{2,0}(0)}$ (resp. $[A]_{L^{2,\alpha}(0)}$),
ellipticity, $n$, $\alpha$ and $s$.
\end{thm}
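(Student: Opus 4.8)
The plan is to prove both parts simultaneously by the compactness--and--approximation scheme of Caffarelli, carried out entirely at the level of the extension problem. Write $m=0$, $\beta=\alpha$ in part $(1)$ and $m=1$, $\beta=1+\alpha$ in part $(2)$, and let $P$ denote a constant when $m=0$ and an affine function of $x$ when $m=1$. After dilating so that $B_1\subset\Omega$, changing the $x$--coordinates so that $B(0)=I$, and dividing $u$ by a suitable multiple of $N:=\norm{u}_{L^2(\Omega)}+[u]_{H^s(\Omega)}+[f]$ (where $[f]$ is $[f]_{L^{2,-2s+\alpha}(0)}$ in part $1$ and $[f]_{L^{2,-2s+\alpha+1}(0)}$ in part $2$), Theorem \ref{thm:extension} shows that the extension $U$ of $u$ is a weak solution of \eqref{perturbed extension} in $B_1^*$ with $F\equiv0$ and Neumann datum $c_sf$, normalized so that $\int_{B_1}U(\cdot,0)^2\,dx+\int_{B_1^*}y^aU^2\,dX\le1$ and $[f]\le\delta$ for a small $\delta$ to be fixed along the way. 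The goal is an iterated one--step improvement: I will produce a fixed $\rho\in(0,1)$, a universal $M_0$, and polynomials $P_k$ (constant, resp. affine; $P_0\equiv0$) such that, for every $k\ge0$,
$$\frac{1}{\rho^{k(n+1+a)}}\int_{B_{\rho^k}^*}y^a\abs{U-P_k}^2\,dX\le\rho^{2k\beta},\qquad
\frac{1}{\rho^{kn}}\int_{B_{\rho^k}}\abs{u-P_k}^2\,dx\le\rho^{2k\beta},$$
with the constant term of $P_{k+1}-P_k$ bounded by $M_0\rho^{k\beta}$ and its linear coefficient (when $m=1$) by $M_0\rho^{k\alpha}$, the latter keeping the linear parts of all $P_k$ uniformly bounded.

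To pass from $k$ to $k+1$, rescale to the unit ball by setting $V(X):=\rho^{-k\beta}\big(U(\rho^kX)-P_k(\rho^kx)\big)$. By the scale--covariance of the extension equation, $V$ is a weak solution of \eqref{perturbed extension} in $B_1^*$ with coefficients $A_k(x):=A(\rho^kx)$ (elliptic, $A_k(0)=I$); with Neumann datum $c_sf_k$, where $f_k(x)=\rho^{k(2s-\alpha)}f(\rho^kx)$ in part $1$ and $f_k(x)=\rho^{k(2s-1-\alpha)}f(\rho^kx)$ in part $2$, these exponents being chosen precisely so that $\norm{f_k}_{L^2(B_1)}\le[f]\le\delta$; and with right--hand side $F_k$ equal to $0$ when $m=0$ and to $-\rho^{-k\alpha}\big((A_k(x)-I)\mathcal B_k,0\big)$ when $m=1$, where one uses $\dive\big(y^a(\mathcal B_k,0)\big)=0$ and the block form of $B$. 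The standing smallness of the Campanato seminorm of $A$ gives $\int_{B_1}\abs{A_k-I}^2\,dx\le\delta^2$, hence (via the uniform bound on $\mathcal B_k$) $\int_{B_1^*}y^a\abs{F_k}^2\,dX\le C\delta^2$, while the two inductive bounds at step $k$ force $\int_{B_1}V(\cdot,0)^2\,dx+\int_{B_1^*}y^aV^2\,dX\le1$.

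Now Corollary \ref{Cor:approximation extension} applies: given $\varepsilon>0$, for $\delta=\delta(\varepsilon)$ small there is a solution $W$ of $\dive(y^a\nabla W)=0$ in $B_{3/4}^*$ with $-y^aW_y|_{y=0}=0$ and $\int_{B_{3/4}^*}y^a\abs{V-W}^2\,dX<\varepsilon^2$. By Proposition \ref{Prop:armonicas}, applied also to the $x$--derivatives of $W$ (which solve the same problem), $\abs{W(0,0)}+\abs{\nabla_xW(0,0)}+\abs{D_x^2W(0,0)}\le M_1$ universally, and $\abs{W_y(x,y)}\le Cy$, $\abs{\nabla_xW_y(x,y)}\le Cy$ near $y=0$; choosing $P:=W(0,0)$ when $m=0$ and $P(x):=W(0,0)+\nabla_xW(0,0)\cdot x$ when $m=1$, a Taylor expansion gives $\abs{W(X)-P(x)}\le C\big(\abs{x}^{m+1}+y^2\big)$ on small balls, so that
$$\frac{1}{\rho^{n+1+a}}\int_{B_\rho^*}y^a\abs{V-P}^2\,dX\le C\Big(\frac{\varepsilon^2}{\rho^{n+1+a}}+\rho^{2(m+1)}\Big).$$
Since $m+1>\beta$ --- this is where $\alpha<1$ enters --- one first fixes $\rho$ so that $C\rho^{2(m+1)}\le\tfrac12\rho^{2\beta}$, then $\varepsilon$ (hence $\delta$) so that $C\varepsilon^2\rho^{-(n+1+a)}\le\tfrac12\rho^{2\beta}$; unwinding the rescaling defines $P_{k+1}$ from $P_k$ and $W$, reproduces the first inductive estimate at step $k+1$, and gives the stated bounds on $P_{k+1}-P_k$. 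The companion estimate for $u$ is propagated by controlling $\norm{(V-P)(\cdot,0)}_{L^2(B_\rho)}$: one estimates $\norm{(V-W)(\cdot,0)}_{L^2(B_{1/2})}\le C\varepsilon$ using the trace inequality Lemma \ref{lem:inequality} on a fixed ball together with the Caccioppoli inequality Lemma \ref{Lem:Caccioppoli} for $V-W$ (an absorption exploiting the smallness of $\delta$ handles the boundary integral), and combines this with $\norm{(W-P)(\cdot,0)}_{L^2(B_\rho)}\le C\rho^{m+1}\rho^{n/2}$ coming from the smoothness of $W$ up to $\{y=0\}$. Finally, summing the geometric series shows $P_k\to P_\infty$, a constant $c$ (resp. an affine $\ell=\mathcal A+\mathcal B\cdot x$) with $\abs c$ (resp. $\abs{\mathcal A}+\abs{\mathcal B}$) controlled as in the statement; interpolating between consecutive dyadic radii and invoking Campanato's characterization of H\"older spaces turns the discrete decay into $\rho^{-n}\int_{B_r}\abs{u-P_\infty}^2\,dx\le Cr^{2\beta}$, and translating the origin to an arbitrary interior point recovers Theorem \ref{thm:interior Lp} (and, by the same scheme with $\beta=\alpha+2s$, Theorem \ref{thm:interior Calpha}) in the Dirichlet case.

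I expect the main obstacle to be the exponent bookkeeping in the rescaling step: one must check that the exponents $2s-\alpha$ and $2s-1-\alpha$ are exactly those keeping $f_k$ bounded by the fixed seminorm $[f]$; that the error $F_k$ produced by subtracting an affine function inherits smallness from the H\"older control of $A$ at the correct rate $\rho^{k\alpha}$ (so that $A$ need only be continuous when $m=0$, but H\"older when $m=1$); that the harmonic gain $\rho^{2(m+1)}$ strictly beats the requested decay $\rho^{2\beta}$; and, more technically, that the boundary integral in the Caccioppoli inequality can be absorbed so that the $L^2$--oscillation of $u$ itself --- needed to feed the normalization of the approximation lemma at each scale --- survives along the iteration.
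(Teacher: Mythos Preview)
Your proposal is correct and follows essentially the same approach as the paper: the paper's own proof simply says to rerun the argument for Theorem~\ref{thm:interior L2alpha} with the exponent $\alpha$ replaced everywhere by $-2s+\alpha$ in part~(1) and by $-2s+\alpha+1$ in part~(2), noting that no reduction to $f(0)=0$ is needed; your scheme carries this out explicitly and the exponent bookkeeping you record is exactly right.

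One small deviation worth noting: in the paper the companion trace estimate at each step is obtained by applying Lemma~\ref{lem:inequality} and the Caccioppoli inequality directly to $V-P$ (that is, to $U-c$ or $U-\ell$ in the notation of Lemmas~\ref{bola fija} and~\ref{a ver}) rather than to $V-W$. Since $V-P$ is itself a solution with the same small data, the boundary integral $\int_{B_{2\lambda}}|(V-P)(\cdot,0)||f_k|\,dx$ is bounded by $(\|V(\cdot,0)\|_{L^2(B_1)}+\|P\|_{L^2(B_1)})\|f_k\|_{L^2}\le(1+D)\delta$ straight from the inductive normalization $\|V(\cdot,0)\|_{L^2(B_1)}\le1$ and the universal bound on $P$; no absorption is needed, and the estimate comes out as $C(\varepsilon^2+\delta)$ rather than the $C\varepsilon$ you wrote. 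Your route through $V-W$ also works once you bound $|(V-W)(\cdot,0)|\le|V(\cdot,0)|+|W(\cdot,0)|$ and use that $W$ is uniformly bounded on $B_{1/2}$, but the paper's choice sidesteps the issue entirely.
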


In view of the comments above, Theorem \ref{thm:interior Lp} is a direct corollary of
Theorem \ref{thm:interior Lmenos2alpha} after a dilation of the variables if necessary.

The rest of this section is devoted to the proof of Theorems \ref{thm:interior L2alpha}
and \ref{thm:interior Lmenos2alpha}.

\subsection{Proof of Theorem \ref{thm:interior L2alpha}(1)}\label{subsection:proof interior(1)}

It is enough to prove the regularity for $u(x)=U(x,0)$, where $U\in H^1(B_1^*,y^adX)$ is
a solution to
\begin{equation}\label{ext Laplacian}
\begin{cases}
  \dive(y^{a}B(x)\nabla U)=\dive(y^aF),& \hbox{in}~B_1^*,\\
  -y^{a}U_y\big|_{y=0}=f,& \hbox{on}~B_1.
\end{cases}
\end{equation}
Here we take $F$ to be a $B_1^*$--valued vector field in an appropriate Morrey space
(see (3) below) such that $F_{n+1}=0$. Theorem \ref{thm:interior L2alpha}(1) then follows
by taking into account Theorem \ref{thm:extension}, where $F\equiv0$.

It is clear that after an orthogonal
change of variables we can assume that $A(0)=I$. We can also assume that $f(0)=0$. For if $f(0)\neq0$ we take
$$\widetilde{U}(x,y)=U(x,y)+\tfrac{1}{1-a}y^{1-a}f(0),$$
that solves \eqref{ext Laplacian} with Neumann data $\widetilde{f}(x)=f(x)-f(0)$
(recall that $B_{n+1,n+1}(x)=1$) and $\widetilde{f}(0)=0$.

Let $\delta>0$. By scaling and by considering
$$\widetilde{U}(x,y)=U(x,y)\left(\int_{B_1}U(x,0)^2dx+\int_{B_1^*}y^aU^2\,dX
+\frac{1}{\delta}\left([f]_{L^{2,\alpha}(0)}+[F]_{\alpha,s}\right)\right)^{-1},$$
we can suppose the following.
 \begin{enumerate}[$(1)$]
   \item ($A$ has small $L^{2,0}(0)$ seminorm)
  $\displaystyle\sup_{0<r\leq1}\frac{1}{r^{n}}\int_{B_r}|A(x)-I|^2\,dx<\delta^2$;
  \item ($f$ has small $L^{2,\alpha}(0)$ seminorm)
  $\displaystyle[f]_{L^{2,\alpha}(0)}^2=\sup_{0<r\leq1}\frac{1}{r^{n+2\alpha}}\int_{B_r}|f|^2\,dx<\delta^2$;
  \item ($F$ has small Morrey seminorm at 0)
  $\displaystyle[F]_{\alpha,s}:
  =\sup_{0<r\leq1}\frac{1}{r^{n+1+a+2(\alpha+2s-1)}}\int_{B_r^*}y^a|F|^2\,dX<\delta^2$;
   \item ($U$ has bounded $L^2$ norms) 
  $\displaystyle \int_{B_1}U(x,0)^2\,dx+\int_{B_1^*}y^aU^2\,dX\leq1$.
  \end{enumerate}
Given $\delta>0$, a solution $U$ to \eqref{ext Laplacian} is called \textit{normalized}
if $f(0)=0$, $A(0)=I$ and $(1)$--$(4)$ above holds.

Now we prove that given $0<\alpha+2s<1$ there exists $0<\delta<1$, depending only on $n$, ellipticity,
$\alpha$ and $s$, such that 
for any normalized solution $U$ to \eqref{ext Laplacian} there exists a constant $c_\infty$
such that
\begin{equation}\label{a probar constant}
\frac{1}{r^n}\int_{B_r}|U(x,0)-c_\infty|^2\,dx\leq C_0r^{2(\alpha+2s)},\quad\hbox{for all}~r>0~\hbox{sufficiently small},
\end{equation}
and $|c_\infty|\leq C_0$, for some constant $C_0$ depending only on $n$, ellipticity, $\alpha$ and $s$.

\begin{lem}\label{bola fija}
 Given $0<\alpha+2s<1$
 there exist $0<\delta,\lambda<1$, a constant $c$ and a universal constant $D>0$ such that
 for any normalized solution $U$ to \eqref{ext Laplacian} we have
 $$\frac{1}{\lambda^n}\int_{B_\lambda}|U(x,0)-c|^2\,dx+\frac{1}{\lambda^{n+1+a}}
 \int_{B_\lambda^*}|U-c|^2y^a\,dX<\lambda^{2(\alpha+2s)},$$
 and $|c|\leq D$.
\end{lem}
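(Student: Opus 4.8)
The plan is to run the compactness-and-comparison scheme: approximate the normalized extension $U$ of \eqref{ext Laplacian} by a solution of the \emph{homogeneous} Neumann extension problem, use the smoothness of that solution to see it is nearly constant at small scales, and transfer this information back to $U$. Fix the target exponent $\gamma:=\alpha+2s\in(0,1)$. For $\varepsilon>0$ to be chosen, let $\delta=\delta(\varepsilon)$ be produced by the approximation lemma (Corollary \ref{Cor:approximation extension}); since a normalized $U$ satisfies $(1)$--$(4)$, the hypotheses of that corollary hold, so there is a weak solution $W$ of \eqref{Neumann armonica perturbed} on $B_{3/4}^*$ with $\int_{B_{3/4}^*}y^a|U-W|^2\,dX<\varepsilon^2$. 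I will in addition use $\int_{B_{3/4}}|U(x,0)-W(x,0)|^2\,dx<\varepsilon^2$; this is a harmless strengthening of Corollary \ref{Cor:approximation extension}, because the contradiction argument there yields $U_k\rightharpoonup U_\infty$ weakly in $H^1(B_{3/4}^*,y^adX)$, the weighted trace map into $L^2(B_{3/4})$ is compact (it factors through $H^s(B_{3/4})$, which embeds compactly in $L^2(B_{3/4})$ by Rellich), and hence the traces converge strongly in $L^2(B_{3/4})$, so the same contradiction is reached for the traces as well.

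Next I would apply Proposition \ref{Prop:armonicas} to $W$. From $(4)$ and the previous step, $\|W\|_{L^2(B_{3/4}^*,y^adX)}\le2$, so (using parts $(1)$, $(2)$, $(3)$ on balls slightly smaller than $B_{3/4}$) there are a universal threshold $\lambda_0\in(0,1)$ and universal constants $C,D$ depending only on $n$ and $s$ such that, for all $\lambda\in(0,\lambda_0)$ and all $(x,y)\in B_\lambda^*$, one has $|W|\le D$, $|\nabla_xW|\le C$ and $|W_y(x,y)|\le Cy$. Put $c:=W(0,0)$, so $|c|\le D$, and
$$|W(x,y)-c|\le|W(x,0)-W(0,0)|+\Big|\int_0^yW_y(x,t)\,dt\Big|\le C\lambda+\tfrac12C\lambda^2\le C_1\lambda\qquad\text{on }B_\lambda^*,$$
with $C_1=C_1(n,s)$.

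To conclude I would write $U-c=(U-W)+(W-c)$: the contribution of $U-W$ to both integrals in the statement is at most $\varepsilon^2$ (in the trace integral, and, using $B_\lambda^*\subset B_{3/4}^*$, in the bulk integral), while the contribution of $W-c$ is $O(\lambda^2)$, using the pointwise bound above together with $\int_{B_\lambda^*}y^a\,dX=c_{n,a}\lambda^{n+1+a}$ and $1+a=2-2s>0$. This gives
$$\frac{1}{\lambda^n}\int_{B_\lambda}|U(x,0)-c|^2\,dx+\frac{1}{\lambda^{n+1+a}}\int_{B_\lambda^*}y^a|U-c|^2\,dX\le\frac{4\varepsilon^2}{\lambda^{n+1+a}}+C_2\lambda^2,$$
with $C_2=C_2(n,s)$ (here we also used $\lambda^{-n}\le\lambda^{-(n+1+a)}$). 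Since $2\gamma<2$, I would first pick $\lambda\in(0,\lambda_0)$, depending only on $n$, ellipticity, $\alpha$ and $s$, so small that $C_2\lambda^{2-2\gamma}<\tfrac14$; then pick $\varepsilon>0$ so small that $4\varepsilon^2\lambda^{-(n+1+a)}<\tfrac14\lambda^{2\gamma}$; and finally take the corresponding $\delta=\delta(\varepsilon)$. The left-hand side above is then $<\lambda^{2(\alpha+2s)}$, which proves the lemma with these $\lambda$, $\delta$, $c$ and $D$.

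The step I expect to be the main obstacle is the trace closeness used above. In its stated form the approximation lemma controls $U-W$ only in the bulk norm $L^2(B_{3/4}^*,y^adX)$, and an $L^2(y^adX)$-small function can have a large trace, so the estimate for $\frac{1}{\lambda^n}\int_{B_\lambda}|U(x,0)-c|^2\,dx$ cannot be obtained from it directly. Nor can one recover it through a Caccioppoli estimate for $V:=U-W$, since that estimate involves the term $\int_{B_{3/4}^*}y^a|B(x)-I|^2|\nabla U|^2\,dX$, which is not controlled by smallness of $A-I$ in $L^2$ alone, no $L^\infty$ gradient bound on $U$ being available at this stage. The fix, as indicated in the first paragraph, is to extract the trace closeness directly from the compactness argument that proves Corollary \ref{Cor:approximation extension}; once $W$ is known to be smooth up to $\{y=0\}$ with $W_y$ vanishing linearly there, everything else is routine.
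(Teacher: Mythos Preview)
Your argument is correct, and the strengthening of Corollary~\ref{Cor:approximation extension} to include trace closeness is legitimate: the Caccioppoli bound in the contradiction sequence gives $H^1(B_{3/4}^*,y^adX)$--boundedness, and since the trace lands in $H^s(B_{3/4})$ (see the remarks preceding Theorem~\ref{thm:extension}), compactness of $H^s\hookrightarrow L^2$ yields strong convergence of the traces.

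The paper, however, handles the trace term differently and does \emph{not} need to revisit the approximation lemma. Instead of splitting $U-c=(U-W)+(W-c)$ in the trace integral, it applies the trace inequality of Lemma~\ref{lem:inequality} directly to $U-c$, which produces a gradient term $\int_{B_\lambda^*}y^a|\nabla U|^2\,dX$. This is then controlled by Caccioppoli applied to $U-c$ itself: since $c$ is a constant, $U-c$ solves the \emph{same} equation \eqref{ext Laplacian} with the same $f$ and $F$, so Caccioppoli bounds $\int_{B_\lambda^*}y^a|\nabla U|^2\,dX$ by $\int_{B_{2\lambda}^*}y^a|U-c|^2\,dX$ plus small terms of size $O(\delta)$. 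The bulk integral $\int_{B_{2\lambda}^*}y^a|U-c|^2\,dX$ is already under control from the first half of the argument. Your diagnosis that Caccioppoli on $V=U-W$ would fail (because of the uncontrollable cross term $(B(x)-I)\nabla W$) is accurate, but the paper never attempts this; it works with $U-c$ throughout, where no such cross term arises.

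Both routes are clean. Yours is perhaps more direct once the strengthened compactness is in hand; the paper's has the advantage of using Corollary~\ref{Cor:approximation extension} exactly as stated, at the cost of one more invocation of Caccioppoli and the trace inequality.
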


\begin{proof}
 Let $0<\varepsilon<1$ to be fixed. Then there exist $0<\delta<1$ and a harmonic function $W$ that
 satisfy Corollary \ref{Cor:approximation extension}. We have
 $$\int_{B_{1/2}^*}|W|^2y^a\,dX\leq 2\int_{B_{1/2}^*}|U-W|^2y^a\,dX
 +2\int_{B_{1/2}^*}U^2y^a\,dX\leq
 2\varepsilon^2+2\leq 4.$$
 Define $c=W(0,0)$. By the estimates on harmonic functions given
 in Proposition \ref{Prop:armonicas}(2),
 there exists a universal constant $D$ such that $|c|\leq D$.
 Moreover, for any $X\in B_{1/4}^*$, by Proposition \ref{Prop:armonicas}(1)-(3),
 \begin{align*}
 |W(X)-c| &\leq |W(x,y)-W(x,0)|+|W(x,0)-W(0,0)| \\
 &\leq |W_y(x,\xi)|y+\|\nabla_xW\|_{L^\infty(B_{1/4})}|x|\leq N(y^2+|x|)\leq N|X|,
 \end{align*}
 for some universal constant $N$. For any $0<\lambda<1/4$,
\begin{equation}\label{arriba}
 \begin{aligned}
  \frac{1}{\lambda^{n+1+a}}&\int_{B_\lambda^*}|U-c|^2y^a\,dX \\
  &\leq\frac{2}{\lambda^{n+1+a}}\int_{B_\lambda^*}|U-W|^2y^a\,dX+\frac{2}{\lambda^{n+1+a}}
  \int_{B_\lambda^*}|W-c|^2y^a\,dX \\
  &\leq \frac{2\varepsilon^2}{\lambda^{n+1+a}}
  +\frac{2N^2}{\lambda^{n+1+a}}\int_{B_\lambda^*}|X|^2y^a\,dX
  \leq \frac{2\varepsilon^2}{\lambda^{n+1+a}}+c_{n,a}\lambda^2.
 \end{aligned}
 \end{equation}

On the other hand, we apply the trace inequality \eqref{eq:trace}
 to $U-c\in H^1(B_1^*,y^adX)$ to get,
for any $0<\lambda<1/8$,
\begin{equation}\label{trace}
\lambda^{1+a} \int_{B_{\lambda}}|U(x,0)-c|^2\,dx\leq C\left(\int_{B_\lambda^*}|U-c|^2y^a\,dX
 +\int_{B_\lambda^*}|\nabla U|^2y^a\,dX\right).
\end{equation}
Next we need to control the gradient in the right hand side of \eqref{trace}. To that end
we use the Caccioppoli inequality 
 in Lemma \ref{Lem:Caccioppoli} that ensures that, since $U-c$ is also a solution
 to the extension equation with the same Neumann type datum $f$,
\begin{align*}
 \int_{B_\lambda^*}&|\nabla U|^2y^a\,dX \leq C\left(\int_{B_{2\lambda}^*}\big(|U-c|^2
 +|F|^2\big)y^a\,dX+
 \int_{B_{2\lambda}}|U(x,0)-c||f(x)|\,dx\right)\\
 &\leq C\int_{B_{2\lambda}^*}|U-c|^2y^a\,dX+\|F\|_{L^2(B_{2\lambda}^*,y^adX)}^2+
 C\big(\|U(\cdot,0)\|_{L^2(B_{2\lambda})}+|c||B_{2\lambda}|^{1/2}\big)\|{f}\|_{L^2(B_{2\lambda})}.
\end{align*}
Then, since we are in the normalization situation, in \eqref{trace} we get
\begin{equation}\label{estimate trace}
 \lambda^{1+a}\int_{B_{\lambda}}|U(x,0)-c|^2\,dx\leq C\int_{B_{2\lambda}^*}|U-c|^2y^a\,dX
 +\delta^2+C(1+|c|)\delta,
\end{equation}
where $C$ depends only on ellipticity, $n$ and $a$.
 Therefore, for any $0<\lambda<1/8$, from \eqref{estimate trace} and \eqref{arriba} we get
 \begin{align*}
 \frac{1}{\lambda^n}\int_{B_{\lambda}}|U(x,0)-c|^2\,dx 
 &\leq \frac{C}{\lambda^{n+1+a}}\int_{B_{2\lambda}^*}|U-c|^2y^a\,dX
 +\frac{C(1+|c|)}{\lambda^{n+1+a}}(\delta+\delta^2) \\
 &\leq \frac{C\varepsilon^2}{\lambda^{n+1+a}}+c_{n,a}\lambda^2
 +\frac{C(1+D)}{\lambda^{n+1+a}}\delta.
 \end{align*}
 Hence, for any $0<\lambda<1/8$, from this
 and \eqref{arriba},
  $$\frac{1}{\lambda^n}\int_{B_\lambda}|U(x,0)-c|^2\,dx+\frac{1}{\lambda^{n+1+a}}
 \int_{B_\lambda^*}|U-c|^2y^a\,dX<\frac{C\varepsilon^2}{\lambda^{n+1+a}}
 +c_{n,a}\lambda^2+\frac{C\delta}{\lambda^{n+1+a}},$$
 where $C$ depends only on ellipticity, $n$ and $a$ and it is universal for any $W$.
 We first take $0<\lambda<1/8$ sufficiently small in such a way that the second term in the right hand side
 above is less than $\frac{1}{3}\lambda^{2(\alpha+2s)}$. Then
 we let $\varepsilon>0$ small enough so that the first term is less than
 $\frac{1}{3}\lambda^{2(\alpha+2s)}$. For this choice of $\varepsilon$ we take $0<\delta<1$
 in the approximation lemma (Corollary \ref{Cor:approximation extension})
 to be so small in such a way that the third term above is
 smaller than $\frac{1}{3}\lambda^{2(\alpha+2s)}$. Hence
 there exist a constant $c$ bounded by a universal constant $D>0$ and $0<\delta<1$
 such that for any normalized solution $U$ and for some fixed $0<\lambda<1/8$,
   $$\frac{1}{\lambda^n}\int_{B_\lambda}|U(x,0)-c|^2\,dx+\frac{1}{\lambda^{n+1+a}}
 \int_{B_\lambda^*}|U-c|^2y^a\,dX<\lambda^{2(\alpha+2s)}.$$
\end{proof}

\begin{lem}\label{lem:claim constant}
In the situation of Lemma \ref{bola fija} there is a sequence of constants $c_k$, $k\geq0$, such that
$$|c_k-c_{k+1}|\leq D\lambda^{k(\alpha+2s)},$$
and
$$\frac{1}{\lambda^{kn}}\int_{B_{\lambda^k}}|U(x,0)-c_k|^2\,dx+
\frac{1}{\lambda^{k(n+1+a)}}\int_{B_{\lambda^k}^*}|U-c_k|^2y^a\,dX<\lambda^{2k(\alpha+2s)}.$$
\end{lem}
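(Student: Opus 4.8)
The plan is to prove this by induction on $k$, iterating Lemma \ref{bola fija} on a sequence of rescaled functions. For $k=0$ we take $c_0=0$, so that the asserted estimate is exactly the normalization hypothesis $(4)$; for $k=1$ we take $c_1=c$, the constant produced by Lemma \ref{bola fija} applied to the normalized solution $U$, so that $|c_0-c_1|=|c|\le D$ and the estimate at scale $\lambda$ is precisely the conclusion of that lemma.

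For the inductive step, assuming $c_k$ has been constructed, I would introduce the rescaled function
\[
U_k(x,y):=\frac{(U-c_k)(\lambda^k x,\lambda^k y)}{\lambda^{k(\alpha+2s)}},\qquad (x,y)\in B_1^*.
\]
Since constants solve the homogeneous extension equation and have vanishing conormal derivative, $U-c_k$ solves \eqref{ext Laplacian} with the same data; applying the scaling identities established above (the dilation $X\mapsto\lambda^k X$ turns the Neumann datum $f$ into $\lambda^{2sk}f(\lambda^k\cdot)$ and the field $F$ into $\lambda^k F(\lambda^k\cdot)$, while $B(x)$ becomes $B(\lambda^k x)$), one checks that $U_k$ is a weak solution of an extension problem of the same form, with coefficient matrix $B_k(x)=B(\lambda^k x)$ (so $A_k(0)=I$), Neumann datum $f_k(x)=\lambda^{-k\alpha}f(\lambda^k x)$ (so $f_k(0)=0$), and field $F_k(x,y)=\lambda^{k(1-\alpha-2s)}F(\lambda^k x,\lambda^k y)$.

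The heart of the step is to verify that $U_k$ is \emph{normalized}, i.e. it satisfies $(1)$--$(4)$. Each of these reduces to the change of variables $z=\lambda^k x$, $w=\lambda^k y$ together with the dilation invariance of the corresponding Morrey-type seminorm: the $L^{2,0}(0)$ bound on $A_k-I$ follows from the one on $A-I$; the $L^{2,\alpha}(0)$ bound on $f_k$ from the one on $f$ (using $f(0)=0$); the Morrey bound $[F_k]_{\alpha,s}<\delta^2$ from $[F]_{\alpha,s}<\delta^2$; and the $L^2$ bound $(4)$ for $U_k$ is exactly the inductive hypothesis after extracting the factor $\lambda^{-2k(\alpha+2s)}$. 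The exponent $\alpha+2s$ in the definition of $U_k$ is chosen precisely so that all these bounds come out scale-invariant with the same $\delta$. Then Lemma \ref{bola fija} applied to $U_k$ yields a constant $\bar c_k$ with $|\bar c_k|\le D$ and
\[
\frac{1}{\lambda^n}\int_{B_\lambda}|U_k(x,0)-\bar c_k|^2\,dx+\frac{1}{\lambda^{n+1+a}}\int_{B_\lambda^*}|U_k-\bar c_k|^2 y^a\,dX<\lambda^{2(\alpha+2s)}.
\]
Setting $c_{k+1}:=c_k+\lambda^{k(\alpha+2s)}\bar c_k$ gives $|c_k-c_{k+1}|\le D\lambda^{k(\alpha+2s)}$, and undoing the change of variables, via $U_k(x,0)-\bar c_k=\lambda^{-k(\alpha+2s)}(U(\lambda^k x,0)-c_{k+1})$, turns the displayed inequality into the estimate at scale $\lambda^{k+1}$, completing the induction.

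The only real obstacle is the careful bookkeeping in this last verification: one must track the powers of $\lambda$ through the dilation in the equation, in the conormal derivative, and in each of the four normalization quantities simultaneously, and confirm that they cancel exactly. There is no additional analytic input beyond Lemma \ref{bola fija} and the scaling identities already at our disposal.
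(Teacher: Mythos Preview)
Your proposal is correct and follows essentially the same route as the paper: define the rescaled function $U_k(X)=\lambda^{-k(\alpha+2s)}\bigl(U(\lambda^kX)-c_k\bigr)$, check via change of variables that $U_k$ is again a normalized solution of an extension problem of the same type (with $A_k(x)=A(\lambda^kx)$, $f_k(x)=\lambda^{-k\alpha}f(\lambda^kx)$, $F_k(X)=\lambda^{-k(\alpha+2s-1)}F(\lambda^kX)$), apply Lemma~\ref{bola fija} to obtain a constant $\bar c_k$ with $|\bar c_k|\le D$, and set $c_{k+1}=c_k+\lambda^{k(\alpha+2s)}\bar c_k$. The only cosmetic difference is in the initialization: the paper sets $c_0=c_1=0$, whereas you take $c_0=0$ and let $c_1$ be the constant produced by the first application of the lemma; both choices launch the same induction.
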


Lemma \ref{lem:claim constant} is enough to get \eqref{a probar constant}.
Indeed, let $c_\infty=\lim_{k\to\infty}c_k$, which is well defined
because of the estimate for $c_k$. For any $r<1/8$, take $k\geq0$ such that $\lambda^{k+1}<r\leq\lambda^{k}$.
Then
\begin{equation}\label{cuentita}
\begin{aligned}
 \frac{1}{r^n}\int_{B_r}|U(x,0)-c_\infty|^2\,dx &\leq \frac{2}{r^n}\int_{B_r}|U(x,0)-c_k|^2\,dx
 +\frac{2}{r^n}\int_{B_r}|c_k-c_\infty|^2\,dx \\
 &\leq 2\left(\frac{\lambda^{kn}}{r^n}\right)\frac{1}{\lambda^{kn}}
 \int_{B_{\lambda^k}}|U(x,0)-c_k|^2\,dx+C_nD^2\lambda^{2k(\alpha+2s)}\\
 &\leq C_{n,\lambda,D}\lambda^{2k(\alpha+2s)}\leq C_{n,\lambda,D}r^{2(\alpha+2s)}.
\end{aligned}
\end{equation}

\begin{proof}[Proof of Lemma \ref{lem:claim constant}]
The proof is done by induction.
When $k=0$, we take $c_0=c_1=0$. Then the conclusion is true because $U$ is a normalized solution.
Assume that the claim is true for some $k\geq0$. Consider
$$\widetilde{U}(X)=\frac{U(\lambda^kX)-c_k}{\lambda^{(\alpha+2s)k}},\quad X\in B_1^*,$$
where $\lambda$ is as in Lemma \ref{bola fija}.
By applying the change of variables $X=\lambda^kZ$ in the weak formulation
$$\int_{B_{\lambda^k}^*}y^aB(x)\nabla U\nabla\psi\,dX=
\int_{B_{\lambda^k}^*}y^aF\nabla\psi\,dX+\int_{B_{\lambda^k}}f(x)\psi(x,0)\,dx,$$
we get, for $\widetilde{B}(x):=B(\lambda^k x)$, $\widetilde{\psi}(X)=\psi(\lambda^kX)$,
$\widetilde{f}(x)=\lambda^{-k\alpha}f(\lambda^k x)$, $\widetilde{F}(x)=\lambda^{-k(\alpha+2s-1)}
F(\lambda^kx)$ and
$\widetilde{U}$ as above,
$$\int_{B_1^*}y^a\widetilde{B}(x)\nabla\widetilde{U}\nabla\widetilde{\psi}\,dX
=\int_{B_1^*}y^a\widetilde{F}\nabla\widetilde{\psi}\,dX
+\int_{B_1}\widetilde{f}(x)\widetilde{\psi}(x,0)\,dx.$$
Thus $\widetilde{U}$ is a weak solution to
\begin{equation}\label{lo que sale}
\begin{cases}
\dive(y^a\widetilde{B}(x)\nabla\widetilde{U})=\dive(y^a\widetilde{F}),&\hbox{in}~B_1^*,\\
-y^a\widetilde{U}_y\big|_{y=0}=\widetilde{f},&\hbox{on}~B_1.
\end{cases}
\end{equation}
Notice that $\widetilde{A}(0)=I$, $\widetilde{F}_{n+1}=0$ and $\widetilde{f}(0)=0$.
Moreover, by changing variables back and using the induction hypothesis,
\begin{align*}
\frac{1}{r^n}\int_{B_r}(\widetilde{A}(x)-I)^2\,dx&=\frac{1}{(\lambda^kr)^n}
\int_{B_{\lambda^kr}}(A(x)-I)^2\,dx<\delta^2;\\
\frac{1}{r^{n+2\alpha}}\int_{B_r}|\widetilde{f}|^2\,dx&=\frac{1}{(\lambda^kr)^{n+2\alpha}}
\int_{B_{\lambda^kr}}|f|^2\,dx\leq[f]_{L^{2,\alpha}(0)}^2<\delta^2;\\
\frac{1}{r^{n+1+a+2(\alpha+2s-1)}}\int_{B_r}y^a|\widetilde{F}|^2\,dX &= 
\frac{1}{(\lambda^kr)^{n+1+a+2(\alpha+2s-1)}}
\int_{B_{\lambda^kr}}y^a|F|^2\,dX<\delta^2; \\ 
\int_{B_1}\widetilde{U}(x,0)^2\,dx&=
\frac{1}{\lambda^{kn}}\int_{B_{\lambda^k}}\frac{|U(x,0)-c_k|^2}{\lambda^{2k(\alpha+2s)}}\,dx\leq1;\\
\int_{B_1^*}y^a\widetilde{U}^2\,dX&=\frac{1}{\lambda^{k(n+1+a)}}
\int_{B_{\lambda^k}^*}y^a\frac{|U-c_k|^2}{\lambda^{2k(\alpha+2s)}}\,dX\leq1.
\end{align*}
Therefore $\widetilde{U}$ is a normalized solution to \eqref{lo que sale}.
 Hence we can apply Lemma \ref{bola fija} to $\widetilde{U}$ in order to get
$$\frac{1}{\lambda^n}\int_{B_{\lambda}}|\widetilde{U}(x,0)-c|^2\,dx
+\frac{1}{\lambda^{n+1+a}}\int_{B_{\lambda}^*}|\widetilde{U}-c|^2y^a\,dX<\lambda^{2(\alpha+2s)}.$$ 
Now we change variables back in the definition of $\widetilde{U}$ to obtain
$$\frac{1}{\lambda^{(k+1)n}}\int_{B_{\lambda^{k+1}}}|U(x,0)-c_{k+1}|^2\,dx<\lambda^{2(k+1)(\alpha+2s)},$$
and
$$\frac{1}{\lambda^{(k+1)(n+1+a)}}\int_{B_{\lambda^{k+1}}^*}
|\widetilde{U}-c_{k+1}|^2y^a\,dX<\lambda^{2(k+1)(\alpha+2s)},$$
where
$$c_{k+1}=c_k+\lambda^{k(\alpha+2s)}c.$$
Obviously, we have $|c_{k+1}-c_k|=|c\lambda^{k(\alpha+2s)}|\leq D\lambda^{\alpha+2s}$. This proves the induction step.
\end{proof}

\subsection{Proof of Theorem \ref{thm:interior L2alpha}(2)}

As in the previous subsection, it is enough to prove the regularity for $u(x)=U(x,0)$, where
$U\in H^1(B_1^*,y^adX)$ is a solution to
\begin{equation}\label{ext Laplacian V}
\begin{cases}
  \dive(y^{a}B(x)\nabla U)=\dive(y^aF),& \hbox{in}~B_1^*,\\
  -y^{a}U_y\big|_{y=0}=f,& \hbox{on}~B_1.
\end{cases}
\end{equation}
Here $F$ is now a $B_1^*$--valued vector field that belongs to an appropriate Campanato space
(see (3) below) and such that $F(0)=0$.

As before, we can assume that $A(0)=I$ and $f(0)=0$. We can also suppose the following for $\delta>0$.
 \begin{enumerate}[$(1)$]
   \item ($A$ has small $L^{2,\alpha+2s-1}(0)$ seminorm)
   $\displaystyle\sup_{0<r\leq1}\frac{1}{r^{n+2(\alpha+2s-1)}}\int_{B_r}|A(x)-I|^2\,dx<\delta^2$;
  \item ($f$ has small $L^{2,\alpha}(0)$ seminorm)
  $\displaystyle[f]_{L^{2,\alpha}(0)}^2=\sup_{0<r\leq1}\frac{1}{r^{n+2\alpha}}\int_{B_r}|f|^2\,dx<\delta^2$;
  \item ($F$ has small Campanato seminorm at 0)
  $\displaystyle\sup_{0<r\leq1}\frac{1}{r^{n+1+a+2(\alpha+2s-1)}}\int_{B_r^*}y^a|F|^2\,dX<\delta^2$;
   \item ($U$ has bounded $L^2$ norms) 
  $\displaystyle \int_{B_1}U(x,0)^2\,dx+\int_{B_1^*}y^aU^2\,dX\leq1$.
  \end{enumerate}
Observe that assumption $(1)$ on the coefficients $A(x)$ is equivalent to ask for the matrix $B(x)$ that
$$\sup_{0<r\leq1}\frac{1}{r^{n+1+a+2(\alpha+2s-1)}}\int_{B_r^*}y^a|B(x)-I|^2\,dX<\delta^2.$$

Now we prove that given $1<\alpha+2s<2$ there exists $0<\delta<1$, depending only on $n$, ellipticity,
$\alpha$ and $s$, such that 
for any normalized solution $U$ to \eqref{ext Laplacian V} there exists a linear function
$\ell_\infty(x)=A_\infty+B_\infty\cdot x$ such that
\begin{equation}\label{a probar ell L}
\frac{1}{r^n}\int_{B_r}|U(x,0)-\ell_\infty|^2\,dx\leq C_0r^{2(\alpha+2s)},\quad\hbox{for all}~r>0~\hbox{sufficiently small},
\end{equation}
and $|A_\infty|+|B_\infty|\leq C_0$, for some constant $C_0$ depending only on $n$, ellipticity, $\alpha$ and $s$.

\begin{lem}\label{a ver}
Given $1<\alpha+2s<2$ there exist $0<\delta,\lambda<1$, a linear function
 $$\ell(x)=A+B\cdot x,$$
 and a universal constant $D>0$ such that for any normalized solution $U$ to \eqref{ext Laplacian V} we have
 $$\frac{1}{\lambda^n}\int_{B_\lambda}|U(x,0)-\ell(x)|^2\,dx+\frac{1}{\lambda^{n+1+a}}
 \int_{B_\lambda^*}|U-\ell|^2y^a\,dX<\lambda^{2(\alpha+2s)},$$
 and $|A|+|B|\leq D$.
\end{lem}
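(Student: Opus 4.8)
The plan is to mimic the proof of Lemma \ref{bola fija}, but now approximating $U$ by a solution $W$ of the constant-coefficient degenerate equation and extracting its first-order Taylor polynomial at the origin rather than just its value. The key structural difference is that in part (1) the constant $c$ played the role of $W(0,0)$, while here the linear function $\ell$ must encode both $W(0,0)$ and $\nabla_x W(0,0)$; the tangential derivative $\partial_y W(0,0)$ vanishes thanks to the homogeneous Neumann condition (Proposition \ref{Prop:armonicas}(3)), which is precisely why no $y$-term appears in $\ell$.

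\textbf{Step 1: approximation.} Given $\varepsilon>0$ to be chosen, invoke Corollary \ref{Cor:approximation extension} to produce $0<\delta<1$ and a solution $W$ of \eqref{Neumann armonica perturbed} in $B_{3/4}^*$ with $\int_{B_{3/4}^*}|U-W|^2 y^a\,dX<\varepsilon^2$. As in part (1), since $\int_{B_1^*}y^a U^2\,dX\leq1$ we get $\int_{B_{3/4}^*}y^a|W|^2\,dX\leq 4$, and then Proposition \ref{Prop:armonicas}(2) gives $|W(0,0)|\leq D$ and $|\nabla_x W(0,0)|\leq D$ for a universal $D$. Define $\ell(x):=W(0,0)+\nabla_x W(0,0)\cdot x=A+B\cdot x$, so $|A|+|B|\leq 2D$.

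\textbf{Step 2: quadratic decay of $W-\ell$.} Using Proposition \ref{Prop:armonicas}(1) with $k=2$ (to control $D^2_x W$) together with Proposition \ref{Prop:armonicas}(3) (which forces $|W_y(x,y)|\leq Cy$, hence $|W(x,y)-W(x,0)|\leq Cy^2$), a second-order Taylor expansion of $W(x,0)$ around $0$ plus the $y$-expansion yields $|W(X)-\ell(x)|\leq N|X|^2$ for $X\in B_{1/4}^*$, with $N$ universal. Then for $0<\lambda<1/4$,
\begin{align*}
\frac{1}{\lambda^{n+1+a}}\int_{B_\lambda^*}|U-\ell|^2 y^a\,dX
&\leq \frac{2}{\lambda^{n+1+a}}\int_{B_\lambda^*}|U-W|^2 y^a\,dX
+\frac{2N^2}{\lambda^{n+1+a}}\int_{B_\lambda^*}|X|^4 y^a\,dX\\
&\leq \frac{2\varepsilon^2}{\lambda^{n+1+a}}+c_{n,a}\lambda^4.
\end{align*}

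\textbf{Step 3: recovering the trace norm via trace and Caccioppoli.} Apply the trace inequality \eqref{eq:trace} to $U-\ell\in H^1(B_1^*,y^adX)$ on $B_\lambda$, then bound $\int_{B_\lambda^*}|\nabla(U-\ell)|^2 y^a\,dX$ by the Caccioppoli inequality (Lemma \ref{Lem:Caccioppoli}): here one must note that $U-\ell$ is \emph{not} a solution of the same equation, but it solves $\dive(y^a B(x)\nabla(U-\ell))=\dive(y^a(F-(B(x)-I)\nabla\ell))$ with the same Neumann datum $f$ (since $\partial_y(U-\ell)=\partial_y U$ and $B_{n+1,n+1}=1$). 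Because $|B(x)-I|$ has small $L^{2,\alpha+2s-1}(0)$ seminorm and $\nabla\ell=B$ is bounded by $D$, the extra term $(B(x)-I)\nabla\ell$ contributes an error controlled by $D^2\delta^2$ at scale $\lambda$; together with the smallness of $f$ and $F$ this gives, in the normalized situation,
\begin{align*}
\frac{1}{\lambda^n}\int_{B_\lambda}|U(x,0)-\ell(x)|^2\,dx
\leq \frac{C\varepsilon^2}{\lambda^{n+1+a}}+c_{n,a}\lambda^4+\frac{C(1+D)^2\delta}{\lambda^{n+1+a}}.
\end{align*}
Combining with Step 2, choose first $\lambda$ small so that $c_{n,a}\lambda^4<\tfrac13\lambda^{2(\alpha+2s)}$ (possible because $\alpha+2s<2$), then $\varepsilon$ small, then $\delta$ small, to conclude
\begin{align*}
\frac{1}{\lambda^n}\int_{B_\lambda}|U(x,0)-\ell(x)|^2\,dx+\frac{1}{\lambda^{n+1+a}}\int_{B_\lambda^*}|U-\ell|^2 y^a\,dX<\lambda^{2(\alpha+2s)}.
\end{align*}

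\textbf{Main obstacle.} The delicate point, absent in part (1), is controlling the mismatch term $(B(x)-I)\nabla\ell$ in the Caccioppoli step: one needs the $L^{2,\alpha+2s-1}(0)$-smallness of the coefficients (not merely continuity) precisely so that this term scales like a lower-order perturbation and does not destroy the gain $\lambda^{2(\alpha+2s)}$. One also has to be careful that $\nabla\ell$ is bounded by the universal constant $D$ coming from Proposition \ref{Prop:armonicas}(2), so that $\delta$ can be chosen depending only on $n$, ellipticity, $\alpha$, $s$ — which is what the statement requires. Everything else is a routine adaptation of Lemma \ref{bola fija}.
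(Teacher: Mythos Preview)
Your proposal is correct and follows essentially the same approach as the paper: approximate $U$ by a solution $W$ of the constant-coefficient extension equation via Corollary \ref{Cor:approximation extension}, set $\ell(x)=W(0,0)+\nabla_xW(0,0)\cdot x$, use Proposition \ref{Prop:armonicas} to obtain the quadratic bound $|W(X)-\ell(x)|\leq N|X|^2$, and then combine the trace inequality with Caccioppoli applied to $U-\ell$, where the extra divergence term $(I-B(x))\nabla\ell$ is controlled by the $L^{2,\alpha+2s-1}(0)$-smallness of the coefficients together with the universal bound $|B|\leq D$. The paper's proof is the same computation; the only cosmetic differences are that the paper writes the extra term as $G=((I-A(x))\nabla_x\ell,0)$ and invokes Proposition \ref{Prop:armonicas}(1) (not (2)) for the gradient bound on $W$.
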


\begin{proof}
 Let $0<\varepsilon<1$ to be fixed. Then there exist $0<\delta<1$ and a harmonic function $W$ that
 satisfy Corollary \ref{Cor:approximation extension}. As before we have
 $$\int_{B_{1/2}^*}|W|^2y^a\,dX\leq 4.$$
 Define $\ell(x)=W(0,0)+\nabla_xW(0,0)\cdot x=:A+B\cdot x$. By the estimates on
 harmonic functions given in Proposition \ref{Prop:armonicas},
 there exists a universal constant $D$ such that $|A|+|B|\leq D$.
 Moreover, for any $X\in B_{1/4}^*$, by Proposition \ref{Prop:armonicas},
 \begin{align*}
 |W(x,y)-\ell(x)| &= \left|\big(W(x,y)-W(x,0)\big)+\big(W(x,0)-W(0,0)-\nabla_xW(0,0)\cdot x\big)\right| \\
 &\leq |W_y(x,\xi)|y+\tfrac{1}{2}|D_x^2W(\xi,0)||x|^2\\
 &\leq C\xi y+\tfrac{1}{2}\|D_x^2W\|_{L^\infty(B_{1/4}^*)}|x|^2\leq N|X|^2,
 \end{align*}
 for some universal constant $N$. For any $0<\lambda<1/4$,
 \begin{equation}\label{primera}
 \begin{aligned}
  \frac{1}{\lambda^{n+1+a}}\int_{B_\lambda^*}&|U-\ell|^2y^a\,dX \\
  &\leq \frac{2}{\lambda^{n+1+a}}\int_{B_{\lambda}^*}|U-W|^2y^a\,dX+
  \frac{2}{\lambda^{n+1+a}}\int_{B_{\lambda}^*}|W-\ell|^2y^a\,dX \\
  &\leq \frac{2\varepsilon^2}{\lambda^{n+1+a}}+\frac{2N^2}{\lambda^{n+1+a}}
  \int_{B_\lambda^*}|X|^4y^a\,dX
  \leq \frac{2\varepsilon^2}{\lambda^{n+1+a}}+c_{n,a}\lambda^4.
 \end{aligned}
 \end{equation}

On the other hand, apply the trace inequality \eqref{eq:trace} to $U-\ell\in H^1(B_1^*,y^adX)$ to get,
for any $0<\lambda<1/8$,
\begin{equation}\label{trace ell}
 \lambda^{1+a}\int_{B_{\lambda}}|U(x,0)-\ell(x)|^2\,dx\leq C\left(\int_{B_{\lambda}^*}|U-\ell|^2y^a\,dX
 +\int_{B_{\lambda}^*}|\nabla(U-\ell)|^2y^a\,dX\right).
\end{equation}
Next we control the gradient in the right hand side of \eqref{trace ell}
by using the Caccioppoli inequality. Notice that $U-\ell$ is a solution
(in the sense of Definition \ref{def:weak}) to
$$\begin{cases}
\dive(y^aB(x)\nabla(U-\ell))=\dive(y^a(F+G)),&\hbox{in}~B_1^*,\\
-y^a(U-\ell)_y\big|_{y=0}=f,&\hbox{on}~B_1,
\end{cases}$$
where the vector field $G$ is given by
$$G=\left(\big(I-A(x)\big)\nabla_x\ell,0\right)\in \R^{n+1},\quad\hbox{and}~G(0)=0.$$
Then, by the Caccioppoli inequality in Lemma \ref{Lem:Caccioppoli},
\begin{align*}
 \int_{B_{\lambda}^*}&|\nabla(U-\ell)|^2y^a\,dX \leq 
 C\left(\int_{B_{2\lambda}^*}\big(|U-\ell|^2+|F+G|^2\big)y^a\,dX+
 \int_{B_{2\lambda}}|U(x,0)-\ell(x)||f(x)|\,dx\right)\\
 &\leq C\int_{B_{2\lambda}^*}|U-\ell|^2y^a\,dX+C\|F+G\|^2_{L^2(B_{2\lambda}^*,y^adX)}+
 C\big(\|U(\cdot,0)\|_{L^2(B_{2\lambda})}+\|\ell\|_{L^2(B_{2\lambda})}\big)\|{f}\|_{L^2(B_{2\lambda})}\\
 &\leq C\int_{B_{2\lambda}^*}|U-\ell|^2y^a\,dX
 +C\delta^2+C(1+D)\delta.
\end{align*}
Plugging this into \eqref{trace ell} and taking into account \eqref{primera} we see that, for any $0<\lambda<1/8$,
 \begin{align*}
 \frac{1}{\lambda^n}\int_{B_{\lambda}}|U(x,0)-\ell(x)|^2\,dx 
 &\leq \frac{C}{\lambda^{n+1+a}}\int_{B_{2\lambda}^*}|U-\ell|^2y^a\,dX
 +\frac{C}{\lambda^{n+1+a}}(\delta^2+\delta) \\
 &\leq \frac{C\varepsilon^2}{\lambda^{n+1+a}}+c_{n,a}\lambda^4+\frac{C\delta}{\lambda^{n+1+a}}.
 \end{align*}
 Hence, for any $0<\lambda<1/8$, from this and \eqref{primera},
  $$\frac{1}{\lambda^n}\int_{B_\lambda}|U(x,0)-\ell(x)|^2\,dx+\frac{1}{\lambda^{n+1+a}}
 \int_{B_\lambda^*}|U-\ell|^2y^a\,dX<\frac{C\varepsilon^2}{\lambda^{n+1+a}}
 +c_{n,a}\lambda^4+\frac{C\delta}{\lambda^{n+1+a}},$$
 where $C$ depends only on ellipticity, $n$ and $a$ and it is universal for any $W$.
 We first take $0<\lambda<1/8$ sufficiently small in such a way that the second term in the right hand side
 above is less than $\frac{1}{3}\lambda^{2(\alpha+2s)}$ (recall that we are in the situation
 where $1<\alpha+2s<2$). Then
 we let $\varepsilon>0$ small enough so that the first term is less than
 $\frac{1}{3}\lambda^{2(\alpha+2s)}$. For this choice of $\varepsilon$ we take $\delta>0$
 in the approximation lemma to be so small in such a way that the third term above is
 smaller than $\frac{1}{3}\lambda^{2(\alpha+2s)}$. Hence, 
 there exist a linear function $\ell(x)$, whose coefficients are bounded by a universal constant $D$, and $0<\delta<1$
 such that for any normalized solution and for some fixed $0<\lambda<1/8$,
   $$\frac{1}{\lambda^n}\int_{B_\lambda}|U(x,0)-\ell(x)|^2\,dx+\frac{1}{\lambda^{n+1+a}}
 \int_{B_\lambda^*}|U-\ell|^2y^a\,dX<\lambda^{2(\alpha+2s)}.$$
\end{proof}

\begin{lem}\label{lem:claim}
In the situation of Lemma \ref{a ver}, there exists a sequence of linear functions
$$\ell_k(x)=A_k+B_k\cdot x,$$
for $k\geq0$, such that
$$\frac{1}{\lambda^{kn}}\int_{B_{\lambda^k}}|U(x,0)-\ell_k(x)|^2\,dx+
\frac{1}{\lambda^{k(n+1+a)}}\int_{B_{\lambda^k}^*}|U-\ell_k|^2y^a\,dX<\lambda^{2k(\alpha+2s)},$$
and
$$|A_k-A_{k+1}|,\lambda^k|B_k-B_{k+1}|\leq D\lambda^{k(\alpha+2s)}.$$
\end{lem}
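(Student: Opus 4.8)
The plan is to prove Lemma~\ref{lem:claim} by induction on $k$, exactly paralleling Lemma~\ref{lem:claim constant} but carrying a linear function $\ell_k$ in place of a constant $c_k$. For $k=0$ I take $\ell_0=\ell_1\equiv0$; the two displayed estimates then hold because $U$ is a normalized solution of \eqref{ext Laplacian V}, and $|A_0-A_1|=\lambda^0|B_0-B_1|=0\le D$. So assume the statement is known up to step $k$, which in particular yields the uniform bounds $|A_k|\le D'$ and $|B_k|\le D'$ obtained by summing the geometric series $|A_j-A_{j+1}|\le D\lambda^{j(\alpha+2s)}$ and $\lambda^j|B_j-B_{j+1}|\le D\lambda^{j(\alpha+2s)}$ over $j<k$ (here $\alpha+2s-1>0$ is used for convergence of the second series), with $D'$ a universal constant.

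For the inductive step set
$$\widetilde{U}_k(X):=\frac{U(\lambda^kX)-\ell_k(\lambda^kx)}{\lambda^{k(\alpha+2s)}},\qquad X=(x,y)\in B_1^*,$$
with $\lambda$ the radius fixed in Lemma~\ref{a ver}. Since $\ell_k$ is $y$-independent with constant gradient $B_k$, subtracting $\ell_k$ from $U$ turns \eqref{ext Laplacian V} into the same equation with one extra divergence-form term: $U-\ell_k$ solves $\dive(y^aB(x)\nabla(U-\ell_k))=\dive(y^a(F+G_k))$ with $G_k(x)=((I-A(x))B_k,0)$ and $G_k(0)=0$, exactly as the field $G$ in the proof of Lemma~\ref{a ver}. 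Performing the change of variables $X=\lambda^kZ$ in the weak formulation then shows that $\widetilde{U}_k$ is a weak solution of an equation of type \eqref{ext Laplacian V} with coefficients $\widetilde{B}_k(x)=B(\lambda^kx)$, Neumann datum $\widetilde{f}_k(x)=\lambda^{-k\alpha}f(\lambda^kx)$, and vector field $\widetilde{H}_k=\widetilde{F}_k+\widetilde{G}_k$, where $\widetilde{F}_k(x)=\lambda^{-k(\alpha+2s-1)}F(\lambda^kx)$ and $\widetilde{G}_k(x)=\lambda^{-k(\alpha+2s-1)}\big((I-A(\lambda^kx))B_k,0\big)$; moreover $\widetilde{B}_k(0)=I$, $\widetilde{f}_k(0)=0$, $\widetilde{H}_k(0)=0$. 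I then check that $\widetilde{U}_k$ is a normalized solution: smallness of $\widetilde{B}_k-I$, of $\widetilde{f}_k$ in the $L^{2,\alpha}(0)$ seminorm and of $\widetilde{F}_k$ in the Campanato seminorm all follow from scale-invariance of these seminorms, precisely as in Lemma~\ref{lem:claim constant}, while the two $L^2$ normalizations of $\widetilde{U}_k$ on $B_1$ and $B_1^*$ are the inductive hypothesis; finally the squared Campanato seminorm of $\widetilde{G}_k$ is bounded by $|B_k|^2$ times that of $A-I$, hence by $C_{n,a}(D')^2\delta^2$, which is admissible after shrinking $\delta$ once and for all (equivalently, Lemma~\ref{a ver} and Corollary~\ref{Cor:approximation extension} are applied to data whose seminorm squared is $\le C\delta^2$, $C$ universal).

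Applying Lemma~\ref{a ver} to $\widetilde{U}_k$ produces a linear function $\widetilde{\ell}(x)=\widetilde{A}+\widetilde{B}\cdot x$ with $|\widetilde{A}|+|\widetilde{B}|\le D$ and
$$\frac{1}{\lambda^n}\int_{B_\lambda}|\widetilde{U}_k(x,0)-\widetilde{\ell}(x)|^2\,dx+\frac{1}{\lambda^{n+1+a}}\int_{B_\lambda^*}|\widetilde{U}_k-\widetilde{\ell}|^2y^a\,dX<\lambda^{2(\alpha+2s)}.$$
I then define $A_{k+1}=A_k+\lambda^{k(\alpha+2s)}\widetilde{A}$ and $B_{k+1}=B_k+\lambda^{k(\alpha+2s-1)}\widetilde{B}$, so that $\ell_{k+1}(\lambda^kx)=\ell_k(\lambda^kx)+\lambda^{k(\alpha+2s)}\widetilde{\ell}(x)$; undoing the rescaling in the last display (change of variables $z=\lambda^kx$, under which $B_\lambda\leftrightarrow B_{\lambda^{k+1}}$) gives the two desired estimates at scale $\lambda^{k+1}$ with this $\ell_{k+1}$. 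Moreover $|A_{k+1}-A_k|=\lambda^{k(\alpha+2s)}|\widetilde{A}|\le D\lambda^{k(\alpha+2s)}$ and $\lambda^k|B_{k+1}-B_k|=\lambda^{k(\alpha+2s)}|\widetilde{B}|\le D\lambda^{k(\alpha+2s)}$, which closes the induction. The only genuine subtlety, and the step I expect to take the most care, is the bookkeeping for the auxiliary field $\widetilde{G}_k$: one must confirm that subtracting $\ell_k$ and rescaling really returns an equation of the form \eqref{ext Laplacian V} and that $\widetilde{H}_k$ still satisfies the smallness hypotheses of Lemma~\ref{a ver} uniformly in $k$ — which works because the needed uniform bound $|B_k|\le D'$ is already available at step $k$ from the estimates proved for $j<k$, the series converging thanks to $\alpha+2s>1$.
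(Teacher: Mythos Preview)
Your proposal is correct and follows essentially the same route as the paper's proof: induction on $k$ with base case $\ell_0=\ell_1\equiv0$, the rescaling $\widetilde{U}_k(X)=\lambda^{-k(\alpha+2s)}(U(\lambda^kX)-\ell_k(\lambda^kx))$, the appearance of the auxiliary field $\widetilde{G}_k$ from subtracting the linear part, verification that $\widetilde{U}_k$ is again normalized (using the uniform bound on $|B_k|$ obtained from the convergent geometric series, which requires $\alpha+2s>1$), application of Lemma~\ref{a ver}, and the definition $\ell_{k+1}(x)=\ell_k(x)+\lambda^{k(\alpha+2s)}\widetilde{\ell}(\lambda^{-k}x)$. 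The only cosmetic difference is that the paper handles the enlarged Campanato seminorm of $\widetilde{F}+\widetilde{G}$ by applying Lemma~\ref{a ver} to $\widetilde{U}/(1+D^2c^2)$ rather than by shrinking $\delta$; this is equivalent to your remark.
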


Before proceeding with the proof, let us show how this claim already implies \eqref{a probar ell L}. Let
$$\ell_\infty(x)=A_\infty+B_\infty\cdot x:=\left(\lim_{k\to\infty}A_k\right)+\left(\lim_{k\to\infty}B_k\right)\cdot x.$$
Notice that $A_\infty$ and $B_\infty$ are well defined because of the Cauchy property they verify.
Observe also that for any $k\geq0$, since $1<\alpha+2s<2$, we have
$$|\ell_\infty(x)-\ell_k(x)|\leq C_{\alpha,s}D\lambda^{k(\alpha+2s)},\quad |x|\leq\lambda^k.$$
For any $0<r<1/8$, take $k\geq0$ such that $\lambda^{k+1}<r\leq\lambda^{k}$.
Then, in a parallel way to \eqref{cuentita}, \eqref{a probar ell L} follows for small $r$.

\begin{proof}[Proof of Lemma \ref{lem:claim}]
 The proof is done by induction in $k\geq0$.
 When $k=0$, we take $\ell_0(x)=\ell_1(x)=0$ and the conclusion is true because $U$
 is a normalized solution. Assume that the claim is true for some $k\geq0$. Consider
$$\widetilde{U}(x,y)=\frac{U(\lambda^kx,\lambda^ky)-\ell_k(\lambda^kx)}{\lambda^{(\alpha+2s)k}},
\quad(x,y)\in B_1^*,$$
where $\lambda$ is as in Lemma \ref{a ver}.
Then, for $\widetilde{B}(x):=B(\lambda^k x)$, $\widetilde{\psi}(x)=\psi(\lambda^k x)$, $\widetilde{f}(x)=\lambda^{-k\alpha}f(\lambda^kx)$ and $\widetilde{F}(X)=\lambda^{-k(\alpha+2s-1)}F(\lambda^kX)$ we have
$$\int_{B_1^*}y^a\widetilde{B}(x)\nabla\widetilde{U}\nabla\widetilde{\psi}\,dX
=\int_{B_1}\widetilde{f}(x)\widetilde{\psi}(x,0)\,dx
+\int_{B_1^*}y^a\left(\widetilde{F}+\frac{I-\widetilde{B}(x)}{\lambda^{k(\alpha+2s-1)}}\nabla\ell_k
\right)\nabla\widetilde{\psi}\,dX.$$
Thus $\widetilde{U}$ is a weak solution to
$$\begin{cases}
\dive(y^a\widetilde{B}(x)\nabla\widetilde{U})=\dive(y^a(\widetilde{F}+\widetilde{G})),&\hbox{in}~B_1^*\\
-y^a\widetilde{U}_y\big|_{y=0}=\widetilde{f},&\hbox{on}~B_1,
\end{cases}$$
where
$$\widetilde{G}=\left(\frac{I-B(\lambda^kx)}{\lambda^{k(\alpha+2s-1)}}\nabla\ell_k,0\right)\in\R^{n+1},
\quad\hbox{and}~\widetilde{G}(0)=0.$$
Certainly $\widetilde{A}(0)=I$ and $\widetilde{f}(0)=0$.
By changing variables and using the hypotheses,
\begin{multline*}
\frac{1}{r^{n+1+a+2(\alpha+2s-1)}}\int_{B_r^*}y^a|\widetilde{F}+\widetilde{G}|^2\,dX\\
\leq \frac{1}{(\lambda^kr)^{n+1+a+2(\alpha+2s-1)}}\int_{B_{\lambda^kr}^*}y^a
\left(|F|^2+|I-B(x)|^2|B_k|^2\right)\,dX<(1+D^2c^2)\delta^2,
\end{multline*}
where we used that
$$|B_k|\leq\sum_{j=1}^k|B_j-B_{j-1}|\leq D\sum_{j=0}^\infty\lambda^{j(\alpha+2s-1)}=Dc.$$
Also,
\begin{align*}
\frac{1}{r^{n+2\alpha}}\int_{B_r}|\widetilde{f}|^2\,dx&=\frac{1}{(\lambda^kr)^{2\alpha+n}}
\int_{B_{\lambda^kr}}f^2\,dx\leq [f]_{L^{2,\alpha}(0)}^2<\delta^2;\\
\int_{B_1}\widetilde{U}(x,0)^2\,dx&=\frac{1}{\lambda^{kn}}
\int_{B_{\lambda^k}}\frac{|U(x,0)-\ell_k(x)|^2}{\lambda^{2k(\alpha+2s)}}\,dx\leq1;\\
\int_{B_1^*}y^a\widetilde{U}^2\,dX&=\frac{1}{\lambda^{k(n+1+a)}}
\int_{B_{\lambda^k}^*}y^a\frac{|U-\ell_k|^2}{\lambda^{2k(\alpha+2s)}}\,dX\leq1.
\end{align*}
By Lemma \ref{a ver} (that can be applied to $\widetilde{U}/(1+D^2c^2)$),
there is a linear function $\ell$ such that
 $$\frac{1}{\lambda^n}\int_{B_\lambda}|\widetilde{U}(x,0)-\ell(x)|^2\,dx+\frac{1}{\lambda^{n+1+a}}
 \int_{B_\lambda^*}|\widetilde{U}-\ell|^2y^a\,dX<\lambda^{2(\alpha+2s)},$$
So changing variables back in the definition of $\widetilde{U}$ we get
 $$\frac{1}{\lambda^{(k+1)n}}\int_{B_{\lambda^{k+1}}}|U(x,0)-\ell_{k+1}(x)|^2\,dx+\frac{1}{\lambda^{(k+1)(n+1+a)}}
 \int_{B_{\lambda^{k+1}}^*}|U-\ell_{k+1}|^2y^a\,dX<\lambda^{2(k+1)(\alpha+2s)},$$
where $\ell_{k+1}(x)=\ell_k(x)+\lambda^{k(\alpha+2s)}\ell(\lambda^{-k}x)$.
By construction,
$$|\ell_{k+1}(x)-\ell_k(x)|=\lambda^{k(\alpha+2s)}|\ell(\lambda^{-k}x)|\leq D\lambda^{k(\alpha+2s)}(1+\lambda^{-k}|x|).$$
When $x=0$ we get
$|A_{k+1}-A_k|\leq D\lambda^{k(\alpha+2s)}$.
On the other hand, again by construction,
$|B_{k+1}-B_k|\leq D\lambda^{-k}\lambda^{k(\alpha+2s)}$.
This finishes the proof.
\end{proof}

\subsection{Proof of Theorem \ref{thm:interior Lmenos2alpha}}

This proof is done by following exactly the same lines of the proof of Theorem \ref{thm:interior L2alpha},
but with easy changes in the exponents. Indeed, for part (1) we have to replace in the proof
of Theorem \ref{thm:interior L2alpha}(1) the exponent $\alpha$ that appears everywhere
there by $-2s+\alpha$. For part (2), along the proof of Theorem \ref{thm:interior L2alpha}(2)
we need to replace the exponent $\alpha$ by the new exponent $-2s+\alpha+1$.
Observe that we do not need the reduction to the case $f(0)=0$.

\section{Case study: the fractional Dirichlet Laplacian in the half space}\label{section:5}

In this section we study the global regularity and the growth near the boundary
for solutions to the fractional Dirichlet Laplacian of the half space.

\subsection{Global regularity}

Let us recall the Schauder estimates for the fractional Laplacian on $\R^n$.

\begin{prop}\label{Prop:Silvestre1}
 Let $0<s<1$ and $0<\alpha\leq1$. Assume that $f\in C^{0,\alpha}(\R^n)$ and that $u\in L^\infty(\R^n)$
 is a solution to 
 $$(-\Delta)^su=f,\quad\hbox{in}~\R^n.$$
 \begin{enumerate}[$(1)$]
  \item If $\alpha+2s<1$, then $u\in C^{0,\alpha+2s}(\R^n)$ and
  $$\|u\|_{C^{0,\alpha+2s}(\R^n)}\leq C\big(\|u\|_{L^\infty(\R^n)}+\|f\|_{C^{0,\alpha}(\R^n)}\big).$$
   \item If $1<\alpha+2s<2$, then $u\in C^{1,\alpha+2s-1}(\R^n)$ and
  $$\|u\|_{C^{1,\alpha+2s-1}(\R^n)}\leq C\big(\|u\|_{L^\infty(\R^n)}+\|f\|_{C^{0,\alpha}(\R^n)}\big).$$
   \item If $2<\alpha+2s<3$, then $u\in C^{2,\alpha+2s-2}(\R^n)$ and
  $$\|u\|_{C^{2,\alpha+2s-2}(\R^n)}\leq C\big(\|u\|_{L^\infty(\R^n)}+\|f\|_{C^{0,\alpha}(\R^n)}\big).$$
 \end{enumerate}
 The constants $C$ above depend only on $n$, $\alpha$ and $s$. In particular,
 if $f=0$ in a ball $B_r$, then $u$ is smooth in $B_{r/2}$.
\end{prop}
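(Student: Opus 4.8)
The plan is to obtain the estimate from the explicit Riesz--potential representation of $u$ after a localization, and then to read off the remaining regularity from the interior smoothness of solutions of $(-\Delta)^sv=0$. Write $\alpha+2s=m+\beta$ with $m\in\{0,1,2\}$ and $\beta\in(0,1]$, so that the three cases of the statement amount to $u\in C^{m,\beta}(\R^n)$. Since the equation and all the norms involved are translation invariant, it suffices to prove the bound on $B_{1/2}$ around an arbitrary point, which we take to be the origin.

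First I would split the right hand side: choose $\varphi\in C^\infty_c(B_3)$ with $\varphi\equiv1$ on $B_2$ and write $f=f_1+f_2$ with $f_1=\varphi f$, $f_2=(1-\varphi)f$. As $f_1\in C^{0,\alpha}(\R^n)$ has compact support, the Riesz potential $v_1(x):=c_{n,s}\int_{\R^n}|x-z|^{2s-n}f_1(z)\,dz$ (with the logarithmic kernel when $n=2s$) is well defined, bounded on bounded sets, decays at infinity, and solves $(-\Delta)^sv_1=f_1$ in $\R^n$. The main ingredient is the classical smoothing property of this potential: if $g\in C^{0,\alpha}(\R^n)$ has compact support then $I_{2s}g\in C^{m,\beta}(\R^n)$ with
$$\|I_{2s}g\|_{C^{m,\beta}(\R^n)}\le C\big(\|g\|_{C^{0,\alpha}(\R^n)}+\|g\|_{L^\infty(\R^n)}\big),$$
which one proves by differentiating under the integral sign up to order $m$ and estimating the remaining singular integral against $[g]_{C^{0,\alpha}(\R^n)}$; see \cite[Ch.~V]{Stein} and \cite{Silvestre CPAM}. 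Applied with $g=f_1$ this gives $\|v_1\|_{C^{m,\beta}(\R^n)}\le C\|f\|_{C^{0,\alpha}(\R^n)}$.

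Next I would set $v_2:=u-v_1$, so that $(-\Delta)^sv_2=f_2$, which vanishes on $B_2$. Interior regularity of solutions of $(-\Delta)^sv_2=0$ in $B_2$ --- which can be obtained with the machinery already in the paper: pass to the extension $V(x,y)=P^s_y\ast v_2(x)$, note that the vanishing of $(-\Delta)^sv_2$ on $B_2$ means the Neumann datum $-\lim_{y\to0^+}y^aV_y$ vanishes there, reflect $V$ evenly across $\{y=0\}$ and apply Proposition~\ref{Prop:armonicas} to the resulting $(|y|^a)$-harmonic function; alternatively use the explicit Poisson kernel of the ball --- shows $v_2\in C^\infty(B_1)$ with
$$\|v_2\|_{C^{m,\beta}(B_1)}\le C\Big(\|v_2\|_{L^\infty(B_2)}+\int_{\R^n}\frac{|v_2(z)|}{(1+|z|)^{n+2s}}\,dz\Big).$$
Because $v_1$ is bounded on $B_2$ by $C\|f\|_{C^{0,\alpha}(\R^n)}$ and, since $f_1$ has compact support, decays like $|z|^{2s-n}$ (or $\log|z|$) at infinity, both terms on the right are bounded by $C\big(\|u\|_{L^\infty(\R^n)}+\|f\|_{C^{0,\alpha}(\R^n)}\big)$. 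Adding the two pieces on $B_1\supset B_{1/2}$ gives $\|u\|_{C^{m,\beta}(B_{1/2})}\le C\big(\|u\|_{L^\infty(\R^n)}+\|f\|_{C^{0,\alpha}(\R^n)}\big)$; by translation the same bound holds on the unit ball around every point with the same constant, and this controls the global norm $\|u\|_{C^{m,\beta}(\R^n)}$. Finally, if $f\equiv0$ in $B_r$ then $u$ itself solves $(-\Delta)^su=0$ in $B_r$, so the interior estimate just used gives $u\in C^\infty(B_{r/2})$.

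The step I expect to be the main obstacle is the H\"older smoothing estimate for $I_{2s}$ at the endpoints $\alpha+2s\in\{1,2\}$, where a direct argument only produces the corresponding Zygmund class; reaching the honest space $C^{0,1}$ (resp.\ $C^{1,1}$) requires exploiting the precise cancellation in the $m$-th order difference quotients of the Riesz kernel, for which I would lean on \cite[Ch.~V]{Stein}. Everything else is routine. I note also that in the non-integer ranges $0<\alpha+2s<1$ and $1<\alpha+2s<2$ one can dispense with the potential-theoretic argument altogether and instead run, with $A\equiv I$, the compactness-and-iteration scheme of Section~\ref{section:4} applied to the Caffarelli--Silvestre extension of $u$, together with the Campanato characterization of H\"older spaces \cite{Campanato}; so only the integer values $\alpha+2s\in\{1,2\}$ and the range $2<\alpha+2s<3$ genuinely require the representation formula above.
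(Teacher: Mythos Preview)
Your proposal is correct and is essentially the argument behind \cite[Proposition~2.8]{Silvestre CPAM}, which is precisely what the paper invokes for parts (1) and (2); the paper offers no independent proof beyond that citation and the remark that (3) follows by the same method with adjusted exponents. Your caution about the endpoints $\alpha+2s\in\{1,2\}$ is well placed---Silvestre handles them by working with second-order symmetric differences of the Riesz kernel rather than first derivatives---and your closing observation that the non-endpoint cases could alternatively be obtained from the iteration scheme of Section~\ref{section:4} is correct but goes beyond what the paper does here.
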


\begin{proof}
 For $(1)$ and $(2)$ see \cite[Proposition~2.8]{Silvestre CPAM}. 
 The statement in $(3)$ is proved analogously by taking into account the range of the exponents.
 The details are omitted.
\end{proof}

Recall that the Zygmund space $\Lambda_\ast(\R^n)$ consists of all bounded functions $u$ on $\R^n$ such that
$$[u]_{\Lambda_\ast(\R^n)}:=\sup_{x,h\in\R^n}\frac{|u(x+h)-2u(x)+u(x-h)|}{|h|}<\infty,$$
under the norm $\|u\|_{\Lambda_\ast(\R^n)}:=\|u\|_{L^\infty(\R^n)}+[u]_{\Lambda_\ast(\R^n)}$, see \cite{Zygmund}
(also \cite[Chapter~V]{Stein} or \cite{Roncal-Stinga}).

\begin{prop}\label{Prop:Silvestre2}
 Let $0<s<1$. Assume that $f\in L^\infty(\R^n)$ and that $u\in L^\infty(\R^n)$
 is a solution to 
 $$(-\Delta)^su=f,\quad\hbox{in}~\R^n.$$
 Namely, assume that $u\in L^\infty(\R^n)$ is given by
 $$u(x)=(-\Delta)^{-s}f(x)=\frac{1}{\Gamma(s)}\int_0^\infty e^{t\Delta}f(x)\,\frac{dt}{t^{1-s}},$$
 where the integral is well defined for almost all $x\in\R^n$ and for some $f\in L^\infty(\R^n)$.
 \begin{enumerate}[$(1)$]
  \item If $0<2s<1$ then $u\in C^{0,2s}(\R^n)$ and
  $$\|u\|_{C^{0,2s}(\R^n)}\leq C\big(\|u\|_{L^\infty(\R^n)}+\|f\|_{L^\infty(\R^n)}\big).$$
  \item If $2s=1$ then $u$ is in the Zygmund space $\Lambda_*(\R^n)$ and
  $$\|u\|_{\Lambda_*(\R^n)}\leq C\big(\|u\|_{L^\infty(\R^n)}+\|f\|_{L^\infty(\R^n)}\big).$$
   \item If $1<2s<2$ then $u\in C^{1,2s-1}(\R^n)$ and
  $$\|u\|_{C^{1,2s-1}(\R^n)}\leq C\big(\|u\|_{L^\infty(\R^n)}+\|f\|_{L^\infty(\R^n)}\big).$$
 \end{enumerate}
 The constants $C$ above depend only on $n$ and $s$.
\end{prop}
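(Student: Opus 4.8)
The plan is to argue directly from the subordination formula
$$u(x)=\frac{1}{\Gamma(s)}\int_0^\infty e^{t\Delta}f(x)\,\frac{dt}{t^{1-s}},$$
combined with the Gaussian representation $e^{t\Delta}g=G_t\ast g$, $G_t(x)=(4\pi t)^{-n/2}e^{-|x|^2/(4t)}$, and the elementary bounds $\|e^{t\Delta}\|_{L^\infty\to L^\infty}\le 1$, $\|\nabla_x G_t\|_{L^1(\R^n)}\le C t^{-1/2}$ and $\|D^2_x G_t\|_{L^1(\R^n)}\le C t^{-1}$. Since $u\in L^\infty(\R^n)$, in each case it suffices to bound a suitable increment of $u$ uniformly in $x\in\R^n$; the uniformity of these bounds will also justify passing them through the (a.e. defined) integral.

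For $(1)$, fix $h\in\R^n$ and split $\int_0^\infty=\int_0^{|h|^2}+\int_{|h|^2}^\infty$. On the first interval estimate $|e^{t\Delta}f(x+h)-e^{t\Delta}f(x)|\le 2\|f\|_{L^\infty}$, which integrates to $\tfrac{2}{s}\|f\|_{L^\infty}|h|^{2s}$. On the second interval write the difference as $\big((G_t(\cdot+h)-G_t)\ast f\big)(x)$ and bound it by $\|G_t(\cdot+h)-G_t\|_{L^1}\|f\|_{L^\infty}\le C|h|\,t^{-1/2}\|f\|_{L^\infty}$, so this piece contributes $C\|f\|_{L^\infty}|h|\int_{|h|^2}^\infty t^{s-3/2}\,dt=C\|f\|_{L^\infty}|h|^{2s}$; the convergence of the last integral at $t=\infty$ is precisely where the restriction $2s<1$ enters. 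Together with $\|u\|_{L^\infty}$ this gives the $C^{0,2s}(\R^n)$ bound.

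For $(2)$ and $(3)$ the point is to pass to second differences, which is what makes the estimate uniform up to and beyond $2s=1$. Writing $\Delta_h^2 v(x):=v(x+h)+v(x-h)-2v(x)$, a second-order Taylor expansion of the Gaussian gives $\|\Delta_h^2 G_t\|_{L^1(\R^n)}\le C|h|^2 t^{-1}$, while trivially $\|\Delta_h^2 G_t\|_{L^1}\le 4$. Splitting the $t$-integral at $t=|h|^2$ as above yields
$$|\Delta_h^2 u(x)|\le C\|f\|_{L^\infty}\Big(\int_0^{|h|^2}\frac{dt}{t^{1-s}}+|h|^2\int_{|h|^2}^\infty t^{s-2}\,dt\Big)\le C\|f\|_{L^\infty}|h|^{2s},$$
and now both integrals converge for every $0<s<1$. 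In the case $2s=1$ this is exactly the bound $[u]_{\Lambda_\ast(\R^n)}\le C\|f\|_{L^\infty}$, which with $\|u\|_{L^\infty}$ proves $(2)$.

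For $(3)$, set $\gamma:=2s-1\in(0,1)$, so the displayed estimate reads $|\Delta_h^2 u(x)|\le C\|f\|_{L^\infty}|h|^{1+\gamma}$. Combined with $u\in L^\infty(\R^n)$, the classical Zygmund characterization of Hölder spaces of order $1+\gamma$ through second differences (see \cite[Chapter~V]{Stein}) gives $u\in C^{1,\gamma}(\R^n)$ with $\|u\|_{C^{1,2s-1}(\R^n)}\le C(\|u\|_{L^\infty}+\|f\|_{L^\infty})$, the constants depending only on $n$ and $s$. I expect this last passage to be the only genuine subtlety: because $u$ is merely bounded, the formal integral for $\nabla u$ need not converge absolutely at $t=\infty$ (one only has $\|\nabla e^{t\Delta}f\|_{L^\infty}\lesssim t^{-1/2}$), so one cannot simply differentiate under the integral sign; routing through the second-difference estimate, where the cancellation produces the decay $|h|^2 t^{-1}$, both circumvents this and yields the boundedness of $\nabla u$ that is implicit in the $C^1$ conclusion.
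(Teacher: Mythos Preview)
Your argument is correct, but it proceeds by a different route than the paper. The paper invokes the heat-semigroup characterization of the H\"older--Zygmund scale, namely that for $0<\alpha<2$ the space $\Lambda_\alpha$ (equal to $C^{0,\alpha}$, $\Lambda_\ast$, or $C^{1,\alpha-1}$ according to whether $\alpha<1$, $\alpha=1$, or $\alpha>1$) is characterized by $\sup_{x,t}|t^{1-\alpha/2}\partial_t e^{t\Delta}u(x)|<\infty$. With this in hand, the paper treats all three cases at once by a single computation: using the semigroup property $e^{t\Delta}e^{r\Delta}=e^{(t+r)\Delta}$ and the bound $\|\partial_t e^{t\Delta}\|_{L^\infty\to L^\infty}\le c/t$, one gets $|t\partial_t e^{t\Delta}(-\Delta)^{-s}f(x)|\le C\|f\|_{L^\infty}t^s$ directly. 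Your approach instead works with finite differences of $u$ and the $L^1$ estimates for $\nabla G_t$ and $D^2 G_t$, splitting the $t$-integral at $|h|^2$; this is more elementary and self-contained (no appeal to the semigroup characterization of $\Lambda_\alpha$), at the cost of a case split between first and second differences and, for part~(3), a separate invocation of the Zygmund second-difference criterion for $C^{1,\gamma}$. Both arguments ultimately exploit the same smoothing scale $t\sim|h|^2$; the paper packages it through $\partial_t e^{t\Delta}$, you through $\Delta_h^2 G_t$.
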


\begin{proof}
Parts (1) and (3) of this result, that is, when $2s\neq1$, are already contained in \cite[Theorem~6.4]{Allen}\footnote{We are grateful to Mark Allen for pointing out this to us.}. Here we
present a proof that works for every $0<s<1$ and includes the Zygmund space.

For $\alpha>0$, let $\Lambda_\alpha$ be the space of bounded functions $u$ on $\R^n$ for which
$$[u]_{\Lambda_\alpha}:=\sup_{x\in\R^n,t>0}|t^{1-\alpha/2}\partial_te^{t\Delta}u(x)|<\infty.$$
It is known that
$$\Lambda_\alpha=\begin{cases}
C^{0,\alpha}(\R^n),&\hbox{if}~0<\alpha<1,\\
\Lambda_\ast(\R^n),&\hbox{if}~\alpha=1,\\
C^{1,\alpha-1}(\R^n),&\hbox{if}~1<\alpha<2.
\end{cases}$$
Moreover, the norm on all these spaces is equivalent to $\|u\|_{L^\infty(\R^n)}+[u]_{\Lambda_\alpha}$.
See \cite{Roncal-Stinga} where this result is proved for the torus. In \cite{Stein} a similar characterization is proved by
using the Poisson semigroup instead of the heat semigroup.
The proof in \cite{Roncal-Stinga} can be easily adapted to the case of $\R^n$.
It is enough to show that 
$$[u]_{\Lambda_{2s}}=\sup_{x\in\R^n,t>0}|t^{1-s}\partial_te^{t\Delta}(-\Delta)^{-s}f(x)|\leq C\|f\|_{L^\infty(\R^n)},$$
for some constant $C$ depending only on $n$ and $s$.
Consider the heat kernel $W_t(x)=(4\pi t)^{-n/2}e^{-|x|^2/(4t)}$, $x\in\R^n$, $t>0$. Notice that the following simple estimate
$$\int_{\R^n}|\partial_tW_t(x)|\,dx\leq\frac{c}{t},\quad t>0,$$
implies that
$$|\partial_te^{t\Delta}f(x)|\leq \frac{c}{t}\|f\|_{L^\infty(\R^n)},\quad\hbox{for all}~x\in\R^n,~t>0.$$
Thus, with this and the semigroup property $e^{t\Delta}e^{r\Delta}f=e^{(t+r)\Delta}f$ we obtain
\begin{align*}
	|t\partial_te^{t\Delta}(-\Delta)^{-s}f(x)| &\leq Ct\int_0^\infty\bigg|\partial_we^{w\Delta}f(x)\big|_{w=t+r}\bigg|\,\frac{dr}{r^{1-s}} \\
	&\leq Ct\|f\|_{L^\infty(\R^n)}\int_0^\infty \frac{1}{t+r}\,\frac{dr}{r^{1-s}} \\
	&= Ct^s\|f\|_{L^\infty(\R^n)}\int_0^\infty \frac{1}{1+\rho}\,\frac{d\rho}{\rho^{1-s}}=C\|f\|_{L^\infty(\R^n)}t^s,
\end{align*}
where $C$ is a constant that depends only on $n$ and $s$.
\end{proof}

In the half space $\R^n_+$ we consider the
Laplacian with homogeneous Dirichlet boundary condition on $\partial\R^n_+=\set{x_n=0}$.
We denote this operator by $-\Delta_D^+$. Then $-\Delta_D^+$ is a nonnegative and 
selfadjoint operator on $H^1_0(\R^n_+)$ for which the spectral theorem applies. For
a function $u$ defined on 
$\overline{\R^n_+}$ with $u(x',0)=0$ and for $0<s<1$ we have
\begin{equation}\label{Dir frac Lap heat}
 (-\Delta_D^+)^su(x)=\frac{1}{\Gamma(-s)}\int_0^\infty\big(e^{t\Delta_D^+}u(x)-u(x)\big)\frac{dt}{t^{1+s}},
 \quad x\in\R^n_+.
\end{equation}
Here $v(x,t)\equiv e^{t\Delta_D^+}u(x)$ is the heat semigroup generated by $-\Delta_D^+$
on the half space, namely, $v$ is the solution to
\begin{equation*}
\begin{cases}
  v_t=\Delta v,&\hbox{for}~x\in\R^{n}_+,t>0,\\
  v(x,0)=u(x),&\hbox{on}~\R^n_+,\\
  v(x',0,t)=0,&\hbox{for}~t\geq0.
\end{cases}
\end{equation*}
Let $x^*=(x',-x_n)$, for $x\in\R^n$.
Denote by $u_{o}$ the odd extension of $u$ to $\R^n$ with respect to $x_n$:
\begin{equation*}
 u_{o}(x)=
\begin{cases}
 u(x),&\hbox{if}~x_n\geq0,\\
 -u(x^*)=-u(x',-x_n),&\hbox{if}~x_n<0.
\end{cases}
\end{equation*}
By using the reflection method we see that
$$e^{t\Delta_D^+}u(x)=e^{t\Delta}u_{o}(x),\quad t>0,~x\in\R^n_+,$$
so that, from \eqref{Dir frac Lap heat} we observe that
\begin{equation}\label{relation}
(-\Delta_D^+)^{s}u(x)=(-\Delta)^{s}u_{o}(x),\quad x\in\R^n_+,
\end{equation}
where $(-\Delta)^s$ is the fractional Laplacian on $\R^n$. Moreover, since
$$e^{t\Delta_D^+}u(x)=\frac{1}{(4\pi t)^{n/2}}\int_{\R^{n}_+}\big(e^{-|x-z|^2/(4t)}-e^{-|x-z^*|^2/(4t)}\big)u(z)\,dz,
\quad x\in\R^n_+,$$
from \eqref{Dir frac Lap heat} we obtain the following pointwise formula:
$$(-\Delta_D^+)^{s}u(x)=c_{n,s}\int_{\R^n_+}\big(u(x)-u(z)\big)
  \left(\frac{1}{|x-z|^{n+2s}}-\frac{1}{|x-z^\ast|^{n+2s}}\right)\,dz,\quad x\in\R^n_+.$$
Also, from the fact that 
$$(-\Delta_D^+)^{-s}f(x)=\frac{1}{\Gamma(s)}\int_0^\infty e^{t\Delta_D}f(x)\,\frac{dt}{t^{1-s}},$$
we get, when $n\neq 2s$,
\begin{equation}\label{inverso}
(-\Delta_D^+)^{-s}f(x)=d_{n,s}\int_{\R^n_+}f(z)
  \left(\frac{1}{|x-z|^{n-2s}}-\frac{1}{|x-z^\ast|^{n-2s}}\right)\,dz,\quad x\in\R^n_+.
  \end{equation}
  The constants $c_{n,s}$ and $d_{n,s}$ above can be computed explicitly.

\begin{thm}[Global regularity in half space -- Dirichlet case]\label{Thm:Laplacian half space}
Let $u$ be a bounded solution to
$$\begin{cases}
   (-\Delta_D^+)^su=f,&\hbox{in}~\R^n_+,\\
   u=0,&\hbox{on}~\partial\R^n_+=\{x_n=0\},
  \end{cases}$$
where $f\in C^{0,\alpha}(\overline{\R^n_+})$, $0<\alpha\leq1$.
\begin{itemize}
\item Suppose that $f(x',0)=0$, for all $x'\in\R^{n-1}$. Then
\begin{itemize}
 \item[(1)] If $\alpha+2s<1$ then $u\in C^{0,\alpha+2s}(\overline{\R^n_+})$ and 
 $$\|u\|_{C^{0,\alpha+2s}(\overline{\R^n_+})}\leq C\big(\|u\|_{L^\infty(\R^n_+)}+\|f\|_{C^{0,\alpha}(\overline{\R^n_+})}\big).$$
 \item[(2)] If $1<\alpha+2s<2$ then $u\in C^{1,\alpha+2s-1}(\overline{\R^n_+})$ and
 $$\|u\|_{C^{1,\alpha+2s-1}(\overline{\R^n_+})}\leq C\big(\|u\|_{L^\infty(\R^n_+)}+\|f\|_{C^{0,\alpha}(\overline{\R^n_+})}\big).$$
 \item[(3)] If $2<\alpha+2s<3$ then $u\in C^{2,\alpha+2s-2}(\overline{\R^n_+})$ and
 $$\|u\|_{C^{2,\alpha+2s-2}(\overline{\R^n_+})}\leq C\big(\|u\|_{L^\infty(\R^n_+)}+\|f\|_{C^{0,\alpha}(\overline{\R^n_+})}\big).$$
\end{itemize}
\item If $f(x',0)\neq0$ at some $x'\in\R^{n-1}$ then
\begin{itemize}
\item[(i)] If $0<2s<1$ then $u\in C^{0,2s}(\overline{\R^n_+})$ and
 $$\|u\|_{C^{0,2s}(\overline{\R^n_+})}\leq C\big(\|u\|_{L^\infty(\R^n_+)}+\|f\|_{L^\infty(\R^n_+)}\big).$$
 \item[(ii)] If $2s=1$ then $u$ is in the Zygmund space $\Lambda_\ast(\overline{\R^n_+})$ and
 $$\|u\|_{\Lambda_\ast(\overline{\R^n_+})}\leq C\big(\|u\|_{L^\infty(\R^n_+)}+\|f\|_{L^\infty(\R^n_+)}\big).$$
 \item[(iii)] If $1<2s<2$ then $u\in C^{1,2s-1}(\overline{\R^n_+})$ and
 $$\|u\|_{C^{1,2s-1}(\overline{\R^n_+})}\leq C\big(\|u\|_{L^\infty(\R^n_+)}+\|f\|_{L^\infty(\R^n_+)}\big).$$
\end{itemize}
\end{itemize}
All the constants $C$ above depend only on $n$, $\alpha$ and $s$.
\end{thm}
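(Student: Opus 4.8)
The strategy is to reduce the half-space Dirichlet problem to the whole-space results of Propositions~\ref{Prop:Silvestre1} and~\ref{Prop:Silvestre2} by means of the odd reflection identity~\eqref{relation}. Let $u_o$ denote the odd extension of $u$ to $\R^n$ and $f_o$ the odd extension of $f$. Since $(-\Delta)^s$ has the even Fourier symbol $|\xi|^{2s}$, it maps odd functions to odd functions; hence for $x$ with $x_n<0$ we have $(-\Delta)^su_o(x)=-(-\Delta)^su_o(x^\ast)$, and as $x^\ast\in\R^n_+$ identity~\eqref{relation} gives $(-\Delta)^su_o(x^\ast)=(-\Delta_D^+)^su(x^\ast)=f(x^\ast)$, so that $(-\Delta)^su_o(x)=-f(x^\ast)=f_o(x)$. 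Together with~\eqref{relation} on $\R^n_+$ this shows $(-\Delta)^su_o=f_o$ on all of $\R^n$ (the hyperplane $\{x_n=0\}$ being negligible). Equivalently, from the reflection identity for the semigroup, $e^{t\Delta_D^+}f=e^{t\Delta}f_o$ in $\R^n_+$, and the subordination formula for $(-\Delta_D^+)^{-s}$, one gets $u_o=(-\Delta)^{-s}f_o$. Note finally that $\|u_o\|_{L^\infty(\R^n)}=\|u\|_{L^\infty(\R^n_+)}$ and $\|f_o\|_{L^\infty(\R^n)}=\|f\|_{L^\infty(\R^n_+)}$.

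Suppose first that $f(x',0)=0$ for all $x'\in\R^{n-1}$. Then $f_o\in C^{0,\alpha}(\R^n)$: for $x\in\R^n_+$ and $z\in\R^n\setminus\overline{\R^n_+}$ the condition $f(x',0)=0$ gives $|f(x)|\le[f]_{C^{0,\alpha}}\,x_n^\alpha$ and $|f(z^\ast)|\le[f]_{C^{0,\alpha}}\,|z_n|^\alpha$, while $|x-z|\ge x_n+|z_n|$; since $0<\alpha\le1$ this yields $|f_o(x)-f_o(z)|=|f(x)+f(z^\ast)|\le2[f]_{C^{0,\alpha}}|x-z|^\alpha$, and therefore $\|f_o\|_{C^{0,\alpha}(\R^n)}\le C\|f\|_{C^{0,\alpha}(\overline{\R^n_+})}$. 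Applying Proposition~\ref{Prop:Silvestre1} to $u_o$ with right-hand side $f_o$ we obtain, according as $\alpha+2s\le1$, $1<\alpha+2s\le2$, or $2<\alpha+2s\le3$, that $u_o$ belongs to $C^{0,\alpha+2s}(\R^n)$, $C^{1,\alpha+2s-1}(\R^n)$, or $C^{2,\alpha+2s-2}(\R^n)$, with the corresponding norm bounded by $C\big(\|u_o\|_{L^\infty(\R^n)}+\|f_o\|_{C^{0,\alpha}(\R^n)}\big)$. Restricting $u_o$ back to $\overline{\R^n_+}$ does not increase any of these norms, and combining with the two norm identities above gives parts (1), (2) and (3).

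If instead $f(x',0)\ne0$ at some point, then $f_o$ is only bounded on $\R^n$ (it jumps across $\{x_n=0\}$), but $u_o=(-\Delta)^{-s}f_o$ with $u_o,f_o\in L^\infty(\R^n)$, so Proposition~\ref{Prop:Silvestre2} applies and gives $u_o\in C^{0,2s}(\R^n)$ when $0<2s<1$, $u_o\in\Lambda_\ast(\R^n)$ when $2s=1$, and $u_o\in C^{1,2s-1}(\R^n)$ when $1<2s<2$, each with norm controlled by $C\big(\|u_o\|_{L^\infty(\R^n)}+\|f_o\|_{L^\infty(\R^n)}\big)$. Restricting to $\overline{\R^n_+}$ yields parts (i), (ii) and (iii). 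The only genuinely nontrivial point is the passage from $(-\Delta_D^+)^su=f$ in $\R^n_+$ to $(-\Delta)^su_o=f_o$ in $\R^n$, which is exactly~\eqref{relation} together with the oddness of $(-\Delta)^s$; everything else is elementary, and in particular no compactness or approximation argument is needed here, the half-space Dirichlet case being directly reducible to the whole-space Schauder estimates.
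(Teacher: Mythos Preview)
Your proof is correct and follows essentially the same approach as the paper: reduce to the whole-space problem via odd reflection, observe that $(-\Delta)^su_o=f_o$ on $\R^n$, and then invoke Propositions~\ref{Prop:Silvestre1} and~\ref{Prop:Silvestre2} according to whether $f_o$ is $C^{0,\alpha}$ or merely bounded. The only cosmetic difference is that the paper verifies $(-\Delta)^su_o(x)=-f(x^\ast)$ for $x_n<0$ by the semigroup formula (using $e^{t\Delta}u_o(x)=-e^{t\Delta_D^+}u(x^\ast)$), whereas you argue via the evenness of the Fourier symbol; both are equally valid, and you additionally supply the explicit $C^{0,\alpha}$ estimate for $f_o$ that the paper leaves implicit.
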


\begin{proof}
From \eqref{relation} we see that $(-\Delta)^su_o(x)=(-\Delta_D^+)^su(x)=f(x)$ when $x\in\R^n_+$.
On the other hand, for $x=(x',x_n)$ with $x_n<0$ we have
\begin{align*}
	(-\Delta)^su_o(x) &= \frac{1}{\Gamma(-s)}\int_0^\infty\big(e^{t\Delta}u_o(x)-u_{o}(x)\big)\frac{dt}{t^{1+s}} \\
	 &= \frac{1}{\Gamma(-s)}\int_0^\infty\big(u(x^*)-e^{t\Delta_D^+}u(x^*)\big)\frac{dt}{t^{1+s}} \\
	 &= -(-\Delta_D^+)^su(x^*) = -f(x^*).
\end{align*}
Hence,
\begin{equation}\label{relation2}
(-\Delta)^{s}u_{o}(x)=f_{o}(x),\quad\hbox{for all}~x\in\R^n.
\end{equation}
Now we apply the results by Silvestre to $u_{o}$ (which coincides with $u$ when $x_n\geq0$).
For (1)--(3), we notice that the condition $f(x',0)=0$ ensures that the odd extension $f_{o}$
is globaly in $C^{0,\alpha}(\R^n)$.
Then we can recall Proposition \ref{Prop:Silvestre1}. As for (i)--(iii),
we can only assure that $f_{o}$ is just bounded (it has a jump discontinuity
at $x_n=0$) and the conclusion follows from Propostion \ref{Prop:Silvestre2}.
\end{proof}

\subsection{Particular one dimensional solutions}

In this subsection we study the growth near the boundary
of solutions to the one dimensional fractional problem
$$\begin{cases}
(-D_{xx}^2)^su(x)=f(x),&\hbox{for}~x>0,\\
u(0)=0,
\end{cases}$$
where $-D_{xx}^2$ denotes the Dirichlet Laplacian on the half line $[0,\infty)$ and
$$f=\begin{cases}
1,&\hbox{when}~0<s<1/2,\\
\chi_{[0,1]}(x),&\hbox{when}~1/2\leq s<1.
\end{cases}$$

\subsubsection{Case $0<s<1/2$ and $f\equiv1$}

From \eqref{inverso},
\begin{align*}
	u(x) &= c_s\int_0^\infty\bigg(\frac{1}{|x-z|^{1-2s}}-\frac{1}{|x+z|^{1-2s}}\bigg)\,dz \\
	&= \frac{c_s}{x^{1-2s}}\int_0^\infty\bigg(\frac{1}{|1-z/x|^{1-2s}}-\frac{1}{|1+z/x|^{1-2s}}\bigg)\,dz \\
	&= c_sx^{2s}\int_0^\infty\bigg(\frac{1}{|1-\omega|^{1-2s}}-\frac{1}{|1+\omega|^{1-2s}}\bigg)\,d\omega.
\end{align*}
The last integral above is finite. Indeed, since $s>0$, the integral converges at the origin.
On the other hand, let $\omega>2$. Consider the function $\varphi(t)=(\omega-t)^{2s-1}$, for $-1\leq t\leq1$. Then
$$\varphi(1)-\varphi(-1)=|1-\omega|^{2s-1}-|1+\omega|^{2s-1}=2\varphi'(\xi)\leq C_s\omega^{2s-2},$$
which implies that the integral converges at infinity for $s<1/2$. We conclude that 
$$u(x)=c_sx^{2s},\quad x\in\R^+,~0<s<1/2,$$
for some positive constant $c_s$ that can be computed explicitly.

\subsubsection{Case $s=1/2$ and $f=\chi_{[0,1]}$}

Let $0<x<1/2$. We can write
\begin{align*}
	u(x) &= c\int_0^1\big(\ln|x+z|-\ln|x-z|\big)\,dz 
	= c\int_0^1\big(\ln|x(1+z/x)|-\ln|x(1-z/x)|\big)\,dz \\
	&= c\int_0^1\big(\ln|1+z/x|-\ln|1-z/x|\big)\,dz 
	= cx\int_0^{1/x}\big(\ln|1+\omega|-\ln|1-\omega|\big)\,d\omega \\
	&= cx\left(C+\int_2^{1/x}\big(\ln(\omega+1)-\ln(\omega-1)\big)\,d\omega\right) \\
	&= cx\left[C+\left(\tfrac{1}{x}+1\right)\ln\left(\tfrac{1}{x}+1\right)
	-\left(\tfrac{1}{x}-1\right)\ln\left(\tfrac{1}{x}-1\right)\right] \\
	&=c\left[Cx+(1+x)\ln(1+x)-(1-x)\ln(1-x)-2x\ln x\right].
\end{align*}
It is clear that
$$u(x)=-2cx\ln x+w(x),\quad 0<x<1/2.$$
where $w$ is smooth up to $x=0$.
Recall that, $\ln(1+x)\sim x$ and $-\ln(1-x)\sim x$, when $0<x<1/2$.
Therefore,
$$u(x)\sim x\ln x,\quad\hbox{as}~x\to0^+.$$

\subsubsection{Case $1/2<s<1$ and $f=\chi_{[0,1]}$}

From \eqref{inverso},
\begin{align*}
	u(x) &= c_s\int_0^1\big(|x-y|^{2s-1}-(x+y)^{2s-1}\big)\,dy \\
	&= c_s\left(\int_0^x(x-y)^{2s-1}\,dy+\int_x^1(y-x)^{2s-1}\,dy-\tfrac{1}{2s}\big((x+1)^{2s}
	-x^{2s}\big)\right) \\
	&= \frac{c_s}{2s}\left(2x^{2s}+(1-x)^{2s}-(1+x)^{2s}\right).
\end{align*}
It is clear that, for some constant $c>0$,
$$u(x)=cx^{2s}+w(x),\quad 0<x<1/2,$$
where $w$ is smooth up to $x=0$.
By taking into account the series expansions of $(1\pm x)^{2s}$ it is easy to check that
$$u(x)\sim x,\quad\hbox{as}~x\to0^+.$$

\subsection{Behavior near the boundary for half space solutions}

Our next step is to consider the problem for the fractional Dirichlet Laplacian in the half
space $\R^n_+$, $n\geq2$,
\begin{equation}\label{multi}
\begin{cases}
(-\Delta_D^+)^sw=g,&\hbox{in}~\R^n_+,\\
w(x',0)=0,&\hbox{on}~\partial\R^n_+,
\end{cases}
\end{equation}
in the cases where
\begin{equation}\label{f}
g(x)=g(x',x_n)=\begin{cases}
1,&\hbox{when}~0<s<1/2,\\
\chi_{[0,1]}(x_n),&\hbox{when}~1/2\leq s<1.
\end{cases}
\end{equation}
To that end we apply the following result.

\begin{lem}
Let $g:\R^n\to\R$ be a function depending only on the $x_n$--variable, that is,
$g(x)=\phi(x_n)$ for some function $\phi:\R\to\R$, for all $x\in\R^n$. Then the solution to
$$(-\Delta)^sw=g,\quad\hbox{in}~\R^n,$$
is a function that depends only on $x_n$. More precisely, $w(x)=\psi(x_n)$
for all $x\in\R^n$, where $\psi:\R\to\R$ is the solution to the one dimensional problem
$$(-\partial_{xx}^2)^s\psi=\phi,\quad\hbox{in}~\R^n.$$
Here $-\partial_{xx}^2$ is the Laplacian on the real line $\R$.
\end{lem}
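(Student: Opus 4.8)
The plan is to prove the lemma directly from the semigroup representation of the inverse fractional Laplacian together with the product (tensor) structure of the Gaussian heat kernel on $\R^n$. Write the solution as
\[
w=(-\Delta)^{-s}g=\frac{1}{\Gamma(s)}\int_0^\infty e^{t\Delta}g\,\frac{dt}{t^{1-s}},
\]
and recall that the heat kernel on $\R^n$ factors as $W_t(x)=(4\pi t)^{-n/2}e^{-|x|^2/(4t)}=\prod_{i=1}^n w_t(x_i)$, where $w_t(\sigma)=(4\pi t)^{-1/2}e^{-\sigma^2/(4t)}$ is the one dimensional heat kernel. The heat semigroup on $\R$ will be denoted $e^{t\partial_{xx}^2}$.

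The key step is the following elementary computation. Since $g(x)=\phi(x_n)$ depends only on the last variable,
\[
e^{t\Delta}g(x)=\int_{\R^n}\prod_{i=1}^n w_t(x_i-z_i)\,\phi(z_n)\,dz
=\Big(\prod_{i=1}^{n-1}\int_\R w_t(x_i-z_i)\,dz_i\Big)\int_\R w_t(x_n-z_n)\,\phi(z_n)\,dz_n .
\]
Each of the first $n-1$ integrals equals $1$ because $w_t$ has unit mass, so $e^{t\Delta}g(x)=(e^{t\partial_{xx}^2}\phi)(x_n)$, which is again a function of $x_n$ only. Plugging this into the formula for $w$ and comparing with the one dimensional identity $(-\partial_{xx}^2)^{-s}\phi=\tfrac{1}{\Gamma(s)}\int_0^\infty e^{t\partial_{xx}^2}\phi\,\tfrac{dt}{t^{1-s}}$ gives
\[
w(x)=\frac{1}{\Gamma(s)}\int_0^\infty (e^{t\partial_{xx}^2}\phi)(x_n)\,\frac{dt}{t^{1-s}}=\big((-\partial_{xx}^2)^{-s}\phi\big)(x_n)=:\psi(x_n),
\]
and by construction $\psi$ solves $(-\partial_{xx}^2)^s\psi=\phi$ on $\R$, which is exactly the assertion.

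I expect the only mild obstacle to be the bookkeeping needed to justify Fubini's theorem and the convergence of the $t$--integral; this is automatic whenever $w=(-\Delta)^{-s}g$ is well defined in the sense used above, in particular for the bounded data $g$ in \eqref{f} (and their odd reflections), for which $e^{t\partial_{xx}^2}\phi$ decays in $t$ fast enough. Alternatively, one can give a softer argument avoiding these estimates: $(-\Delta)^s$ commutes with translations, so $w(\cdot+(h',0))$ solves the same equation for every $h'\in\R^{n-1}$; uniqueness of the solution then forces $w$ to depend on $x_n$ only, and writing $w(x)=\psi(x_n)$, the identity $(-\Delta)^s\big(\psi(x_n)\big)=\big((-\partial_{xx}^2)^s\psi\big)(x_n)$ — obtained precisely as above by integrating out the $x'$ variables in the semigroup formula — identifies the reduced equation with the one dimensional one.
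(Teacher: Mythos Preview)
Your proof is correct and follows essentially the same route as the paper: both compute $e^{t\Delta}g(x)$ by integrating the Gaussian kernel over the $x'$ variables to obtain $e^{t\partial_{xx}^2}\phi(x_n)$, and then insert this into the semigroup formula $w=(-\Delta)^{-s}g=\tfrac{1}{\Gamma(s)}\int_0^\infty e^{t\Delta}g\,\tfrac{dt}{t^{1-s}}$ to conclude $w(x)=\psi(x_n)$. Your additional remarks on Fubini and the translation-invariance alternative are fine but not needed to match the paper's argument.
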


\begin{proof}
Notice first that
\begin{align*}
	e^{t\Delta}g(x) &= \int_{-\infty}^\infty\frac{e^{-|x_n-z_n|^2/(4t)}}{(4\pi t)^{n/2}}\phi(z_n)
	\left(\int_{\R^{n-1}}e^{-|x'-z'|^2/(4t)}\,dz'\right)\,dz_n \\
	&= \int_{-\infty}^\infty\frac{e^{-|x_n-z_n|^2/(4t)}}{(4\pi t)^{1/2}}\phi(z_n)\,dz_n
	= e^{-t\partial_{xx}^2}\phi(x_n),
\end{align*}
where $\{e^{-t\partial_{xx}^2}\}_{t>0}$ denotes the heat semigroup on the real line. Hence,
\begin{align*}
	w(x) &= (-\Delta)^{-s}g(x)=\frac{1}{\Gamma(s)}\int_0^\infty e^{t\Delta}g(x)\,\frac{dt}{t^{1-s}} \\
	 &= \frac{1}{\Gamma(s)}\int_0^\infty e^{-t\partial_{xx}^2}\phi(x_n)\,\frac{dt}{t^{1-s}}
	 =(-\partial_{xx}^2)^{-s}\phi(x_n)=\psi(x_n).
\end{align*}
\end{proof}

In view of the previous Lemma, the one dimensional results
and the relations \eqref{relation} and \eqref{relation2},
we get that the solution $w$ to \eqref{multi} with $g$ as in 
\eqref{f} satisfy the following properties:
\begin{equation}\label{solutions}
w(x)=\begin{cases}
cx_n^{2s},&\hbox{for all}~x\in\R^n_+,~\hbox{when}~0<s<1/2,\\
-cx_n\ln x_n+\eta_{1/2}(x_n),&\hbox{for all}~x\in\R^n_+,~x_n<1/2,~\hbox{when}~s=1/2,\\
cx_n^{2s}+\eta(x_n),&\hbox{for all}~x\in\R^n_+,~x_n<1/2,~\hbox{when}~1/2<s<1.
\end{cases}
\end{equation}
In the last two cases, $\eta_{1/2}$ and $\eta$ are smooth up to $x_n=0$. Also,
\begin{equation}\label{crecimiento al borde}
w(x)\sim x_n^{\min\{2s,1\}},\quad\hbox{as}~x_n\to0^+,~\hbox{uniformly in}~x'\in\R^{n-1},
~\hbox{if}~s\neq1/2,
\end{equation}
and
$$w(x)\sim x_n|\ln x_n|,\quad\hbox{as}~x_n\to0^+,~\hbox{uniformly in}~x'\in\R^{n-1},
~\hbox{if}~s=1/2.$$
Finally, it is clear that the solution $W$ to
\begin{equation}\label{extension w}
\begin{cases}
  \dive(y^{a}\nabla W)=0,& \hbox{in}~\R^n_+\times(0,\infty),\\
  -y^{a}W_y\big|_{y=0}=\theta g,& \hbox{on}~\R^n_+,\\
  W=0,&\hbox{on}~\partial\R^n_+\times[0,\infty),
\end{cases}
\end{equation}
with $g$ as in \eqref{f} and $\theta\in\R$ satisfies
$${W}(x,0)=\theta w(x),\quad x\in\R^n_+.$$

\section{Boundary regularity -- Dirichlet case}\label{section:6}

Theorems \ref{thm:boundary Calpha} and \ref{thm:boundary Lp}
for solutions to \eqref{uma} are consequences of 
the more general results that we state and prove here.

Throughout this section we assume that 
$\Omega\subset\R^n_+$ is a bounded domain whose boundary $\partial\Omega$ contains a flat portion
on $\{x_n=0\}$ in such a way that $B_1^+\subset\Omega$.

We say that a function $f:B_1\cap\{x_n\geq0\}\to\R$ is in $L_{\partial\Omega}^{2,\alpha}(0)$, for $0\leq\alpha<1$, whenever
$$[f]_{L_{\partial\Omega}^{2,\alpha}(0)}^2:=\sup_{0<r\leq1}\frac{1}{r^{n+2\alpha}}\int_{B_r^+}|f(x)-f(0)|^2\,dx<\infty,$$
where $f(0)$ is defined as  $\displaystyle f(0):=\lim_{r\to0}\frac{1}{|B_r^+|}\int_{B_r^+}f(x)\,dx$.
It is clear that if $f$ is H\"older continuous of order $\alpha$ at $0$ then $f\in L_{\partial\Omega}^{2,\alpha}(0)$.
If the condition above holds uniformly in balls centered at points of $\partial\Omega$
close to the origin, then $f$ is $\alpha$--H\"older continuous at the
boundary near the origin, see \cite{Campanato}.

\begin{thm}\label{thm:boundary L2alpha}
Consider the half space solutions $w$ given in \eqref{solutions}.
Let $u$ be a solution to \eqref{uma}. Assume that $f\in L_{\partial\Omega}^{2,\alpha}(0)$,
for some $0<\alpha<1$.
 \begin{enumerate}[$(1)$]
  \item Suppose that $0<\alpha+2s<1$.
  There exist $0<\delta<1$, depending only on $n$, ellipticity, $\alpha$ and $s$,
   and a constant $C_0>0$ such that if
   $$\sup_{0<r\leq1}\frac{1}{r^n}\int_{B_r^+}|A(x)-A(0)|^2\,dx<\delta^2,$$
   then
  $$\frac{1}{r^n}\int_{B_r^+}|u(x)-f(0)w(x)|^2\,dx\leq C_1r^{2(\alpha+2s)},\quad\hbox{for all}~r>0~
  \hbox{sufficiently small},$$
  where $C_1\leq C_0\big(1+\|u\|_{L^2(\Omega)}+[u]_{H^s(\Omega)}+[f]_{L_{\partial\Omega}
  ^{2,\alpha}(0)}+|f(0)|\big)$.
   \item Suppose that $s\geq1/2$ and $1<\alpha+2s<2$.
   There exist $0<\delta<1$, depending only on $n$, ellipticity, $\alpha$ and $s$,
   and a constant $C_0>0$ such that if
   $$\sup_{0<r\leq1}\frac{1}{r^{n+2(\alpha+2s-1)}}\int_{B_r^+}|A(x)-A(0)|^2\,dx<\delta^2,$$
   then there exists a linear function $\ell(x)=\mathcal{B}\cdot x$ such that
     $$\frac{1}{r^n}\int_{B_r^+}|u(x)-f(0)w(x)-\ell(x)|^2\,dx\leq C_1r^{2(\alpha+2s)},\quad\hbox{for all}~r>0~
     \hbox{sufficiently small},$$
  where $C_1+|\mathcal{B}|
  \leq C_0\big(1+\|u\|_{L^2(\Omega)}+[u]_{H^s(\Omega)}+[f]_{L_{\partial\Omega}^{2,\alpha}(0)}+|f(0)|\big)$.
 \end{enumerate}
 The constants $C_0$ above depend only on $[A]_{L^{2,0}_{\partial\Omega}(0)}$
 (resp. $[A]_{L^{2,\alpha+2s-1}_{\partial\Omega}(0)}$), ellipticity, $n$, $\alpha$ and $s$.
\end{thm}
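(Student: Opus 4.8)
The plan is to run exactly the compactness‑and‑iteration scheme used for the interior estimate (Theorem \ref{thm:interior L2alpha}), now in the half‑balls $B_r^+$ and $(B_r^+)^*$, keeping track of the Dirichlet condition on the flat part $B_1\cap\{x_n=0\}$, and with the half‑space profile $f(0)w$ playing the role that the constant (resp.\ affine) function played in the interior. First I would pass to the extension problem: by Theorem \ref{thm:extension} write $u=U(\cdot,0)$, where $U\in H^1_0(\Omega\times(0,\infty),y^adX)$ solves $\dive(y^aB(x)\nabla U)=0$, $U=0$ on $\partial\Omega\times[0,\infty)$, and $-y^aU_y|_{y=0}=c_sf$ weakly; in particular $U=0$ on $(B_1\cap\{x_n=0\})\times[0,\infty)$, so in $(B_1^+)^*$ it is of the type treated in Remark \ref{remark1}. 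After an orthogonal change of variables assume $A(0)=I$, i.e.\ $B(0)=I$. Let $W_0$ be the extension of $f(0)w$ as in \eqref{extension w}, so that $W_0$ is $y^a$‑harmonic in $(B_1^+)^*$, vanishes on $\{x_n=0\}$, has trace $f(0)w$ on $\{y=0\}$ and Neumann datum $c_sf(0)g$ there; from the explicit one–dimensional analysis of Section \ref{section:5} one sees that $W_0\in H^1((B_1^+)^*,y^adX)$ and that near the codimension‑two edge $\{x_n=0=y\}$ one has $|W_0|\lesssim|(x_n,y)|^{\min\{2s,1\}}$ and $|\nabla_xW_0|\lesssim|(x_n,y)|^{\min\{2s,1\}-1}$.

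Next I would set $V:=U-W_0$. Since $\dive(y^a\nabla W_0)=0$, the function $V$ is a weak solution of the half‑ball problem \eqref{media bola}
\[
\begin{cases}
\dive(y^aB(x)\nabla V)=\dive(y^aF),&\hbox{in}~(B_1^+)^*,\\
-y^aV_y|_{y=0}=\widetilde f,&\hbox{on}~B_1^+,\\
V=0,&\hbox{on}~B_1\cap\{x_n=0\},
\end{cases}
\]
with $F:=(I-B(x))\nabla W_0=\big((I-A(x))\nabla_xW_0,0\big)$ (so $F_{n+1}=0$) and $\widetilde f:=c_s(f-f(0)g)$. Since $g\equiv1$ on $B_r^+$ for every $r<1$ (always, as $g=\chi_{[0,1]}(x_n)$ when $s\ge1/2$), near the origin $\widetilde f=c_s(f-f(0))$, whence $\widetilde f(0)=0$ and $[\widetilde f]_{L^{2,\alpha}(0)}\le c_s[f]_{L_{\partial\Omega}^{2,\alpha}(0)}$. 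The crucial point is to check that, after a dilation of the variables and the usual normalization (dividing $V$ by a suitable combination of $\|V(\cdot,0)\|_{L^2(B_1^+)}$, $\|V\|_{L^2((B_1^+)^*,y^adX)}$ and the seminorms of $\widetilde f,F,A-I$), the function $V$ becomes a \emph{normalized} solution of the half‑ball problem in the sense of subsection \ref{subsection:proof interior(1)}. This reduces to estimating $F=(I-A(x))\nabla_xW_0$ in the relevant weighted Morrey (resp.\ Campanato) space, which is exactly where the smallness of the seminorm of $A-I$, the $A_2$‑weighted integrability near the edge $\{x_n=0=y\}$, and the precise near‑edge growth of $\nabla_xW_0$ from Section \ref{section:5} enter; it is this requirement that forces $s\ge1/2$ in part (2), where a first‑order comparison is needed, while in part (1) one has $s<1/2$, $g\equiv1$, $w=cx_n^{2s}$.

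Once $V$ is normalized, the argument is verbatim that of Section \ref{section:4}. Using the Caccioppoli inequality (Lemma \ref{Lem:Caccioppoli}) and the approximation lemma up to the boundary of Remark \ref{remark1}, together with the boundary regularity of the limiting solutions $\mathcal{W}$ of \eqref{label} given in Remark \ref{remark2}, I would prove the one–step decay: in part (1), $\mathcal{W}(0,0)=0$ (Dirichlet condition) gives $|\mathcal{W}(X)|\le N|X|\le N|X|^{\alpha+2s}$ on $B_{1/4}^+$ since $\alpha+2s<1$, so the half‑ball analogue of Lemma \ref{bola fija} holds with trivial correction $0$; in part (2), $\mathcal{W}(0,0)=0$ again, so the natural correction is the linear function $\nabla_x\mathcal{W}(0,0)\cdot x=:\mathcal{B}\cdot x$ with \emph{no constant term} (this is why $\ell(x)=\mathcal{B}\cdot x$), and $|\mathcal{W}(X)-\mathcal{B}\cdot x|\le N|X|^2\le N|X|^{\alpha+2s}$ since $\alpha+2s<2$. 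Iterating as in Lemmas \ref{lem:claim constant} and \ref{lem:claim} (tracking the geometric series bounding the corrections, and using $1<\alpha+2s<2$ in part (2)) and undoing the normalization yields
\[
\frac1{r^n}\int_{B_r^+}|u(x)-f(0)w(x)-\ell(x)|^2\,dx\le C_1r^{2(\alpha+2s)}
\]
for all small $r$, with $\ell\equiv0$ in part (1); the dependence of the constants is as in the interior case, and the extra ``$1$'' in the bound for $C_1$ is $\|W_0\|_{H^1((B_1^+)^*,y^adX)}\lesssim1+|f(0)|$ since $w$ is a fixed universal profile.

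The hard part will be the second paragraph — controlling the inhomogeneity $F=(I-B(x))\nabla W_0$ produced by subtracting the \emph{flat} barrier from the \emph{variable‑coefficient} solution, so that it lands in the same Morrey/Campanato scale as in Section \ref{section:4} with a seminorm absorbed by the smallness hypothesis on $A$; this is where the explicit one–dimensional profiles of Section \ref{section:5} and a careful $A_2$‑weighted computation near the codimension‑two edge $\{x_n=0=y\}$ are indispensable. Once this is in place, the compactness and iteration machinery transfers with only cosmetic changes (half‑balls instead of balls, Remarks \ref{remark1} and \ref{remark2} in place of Corollary \ref{Cor:approximation extension} and Proposition \ref{Prop:armonicas}), and Theorem \ref{thm:interior Lmenos2alpha}‑style exponent bookkeeping gives the $L^p$ version (Theorem \ref{thm:boundary Lp}).
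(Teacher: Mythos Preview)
Your proposal follows essentially the same route as the paper: pass to the extension, subtract the half-space barrier $W$ with trace $f(0)w$, and run the interior iteration on $V=U-W$ over half-balls via Remarks~\ref{remark1}--\ref{remark2}, with $c_k\equiv0$ in part~(1) and constant-free linear corrections $\ell_k=B_k\cdot x$ in part~(2). You are somewhat more explicit than the paper about the inhomogeneity $H=(I-B(x))\nabla W$ produced by the subtraction; the paper simply asserts that after normalization ``conditions $(1)$--$(4)$ hold with the appropriate changes'' and does not carry out the edge computation you flag as the hard part.
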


Observe the extra term $1$ in the estimates for $C_1$ and $C_1+|\mathcal{B}|$ that comes from
the $H^s$ norm of $w$.

Theorem \ref{thm:boundary Calpha} is a consequence of Theorem \ref{thm:boundary L2alpha}.
Indeed, first notice that 
the conclusion of Theorem \ref{thm:boundary L2alpha} can be translated to any point 
$x_0$ at $\partial\Omega$. We can first flatten
the boundary of $\Omega$ at $x_0$
and then rescale (and rotate if necessary)
the resulting domain so that $B_1^+(x_0)\subset\Omega$ with $B_1(x_0)\cap\{x_n=0\}\subset\partial\Omega$. Then
$v:=u-f(x_0)w$ has the desired regularity around $x_0$ (remember that $u(x_0)-f(x_0)w(x_0)=0$).
By taking into account the growth of $w$ near the boundary \eqref{crecimiento al borde} and
going back to the initial variables the conclusion follows.

In a similar way as we did for $L^{2,\alpha}_{\partial\Omega}(0)$, we can define
$L^{2,-2s+\alpha}_{\partial\Omega}(0)$ and $L^{2,-2s+\alpha+1}_{\partial\Omega}(0)$.
It is clear that if $f\in L^p(B_1^+)$ then parallel remarks as those preceding Theorem 
\ref{thm:interior Lmenos2alpha} hold for $L^{2,-2s+\alpha}_{\partial\Omega}(0)$
and $L^{2,-2s+\alpha+1}_{\partial\Omega}(0)$, with the same exponents $p$ and $\alpha$.

\begin{thm}\label{thm:boundary Lmenos2alpha}
Let $u$ be a solution to \eqref{uma}.
\begin{enumerate}[$(1)$]
  \item Suppose that $f\in L_{\partial\Omega}^{2,-2s+\alpha}(0)$.
  There exist $0<\delta<1$, depending only on $n$, ellipticity, $\alpha$ and $s$,
   and a constant $C_0>0$ such that if
   $$\sup_{0<r\leq1}\frac{1}{r^n}\int_{B_r^+}|A(x)-A(0)|^2\,dx<\delta^2,$$
   then
  $$\frac{1}{r^n}\int_{B_r^+}|u(x)|^2\,dx\leq C_1r^{2\alpha},\quad\hbox{for all}~
  r>0~\hbox{sufficiently small},$$
  where $C_1\leq C_0\big(\|u\|_{L^2(\Omega)}+[u]_{H^s(\Omega)}+[f]_{L^{2,-2s+\alpha}_{\partial\Omega}(0)}\big)$.
   \item Suppose that $f\in L_{\partial\Omega}^{2,-2s+\alpha+1}(0)$. 
   There exist $0<\delta<1$, depending only on $n$, ellipticity, $\alpha$ and $s$,
    and a constant $C_0>0$ such that if
   $$\sup_{0<r\leq1}\frac{1}{r^{n+2\alpha}}\int_{B_r^+}|A(x)-A(0)|^2\,dx<\delta^2,$$
   then there exists a linear function $\ell(x)=\mathcal{B}\cdot x$ such that
     $$\frac{1}{r^n}\int_{B_r^+}|u(x)-\ell(x)|^2\,dx\leq C_1r^{2(1+\alpha)},\quad\hbox{for all}
     ~r>0~\hbox{sufficiently small},$$
  where $C_1+|\mathcal{B}|
  \leq C_0\big(\|u\|_{L^2(\Omega)}+[u]_{H^s(\Omega)}+[f]_{L_{\partial\Omega}^{2,-2s+\alpha+1}(0)}\big)$.
 \end{enumerate}
The constants $C_0$ above depend only on $[A]_{L^{2,0}_{\partial\Omega}(0)}$
 (resp. $[A]_{L^{2,\alpha}_{\partial\Omega}(0)}$), ellipticity, $n$, $\alpha$ and $s$.
\end{thm}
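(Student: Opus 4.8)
The plan is to run the half-space version of the proof of Theorem~\ref{thm:interior Lmenos2alpha}, following verbatim the structure already used for Theorem~\ref{thm:boundary L2alpha}, but with the H\"older exponent $\alpha$ there replaced everywhere by $-2s+\alpha$ in part~(1) and by $-2s+\alpha+1$ in part~(2), and with the correction term $f(0)w(x)$ simply deleted. First I would reduce to the extension problem: by Theorem~\ref{thm:extension}, $u(x)=U(x,0)$ where $U\in H^1((B_1^+)^*,y^adX)$ solves the half-ball problem \eqref{media bola} with Neumann datum $f$ on $B_1^+$, with $U=0$ on $B_1\cap\{x_n=0\}$, and with a vector field $F$ (initially $F\equiv0$, but kept in the scheme because the iteration in part~(2) produces a divergence term). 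After an orthogonal change of variables so that $A(0)=I$, and after dividing $U$ by a constant and rescaling, I would assume the normalization hypotheses: $A$ has small $L^{2,0}_{\partial\Omega}(0)$ (resp. $L^{2,\alpha}_{\partial\Omega}(0)$) seminorm, $f$ has small $L^{2,-2s+\alpha}_{\partial\Omega}(0)$ (resp. $L^{2,-2s+\alpha+1}_{\partial\Omega}(0)$) seminorm, $F$ has small Morrey (resp. Campanato) seminorm, and $\int_{B_1^+}U(\cdot,0)^2+\int_{(B_1^+)^*}y^aU^2\le1$. The structural point specific to the boundary is that, since $U\equiv0$ on the flat part $\{x_n=0\}$, every object that $U$ is compared to must vanish there too: in part~(1) the comparison constant is forced to be $0$, and in part~(2) the comparison affine function has no constant term and no tangential linear part, hence is of the form $\ell(x)=\mathcal{B}\cdot x$ with $\mathcal{B}$ in the $e_n$ direction; this is exactly the form in the statement. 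No $f(0)w$ correction is needed because the target decay ($r^\alpha$ in part~(1), $r^{1+\alpha}$ in part~(2)) is strictly below the boundary growth $r^{\min\{2s,1\}}$ of the functions $w$ in \eqref{solutions}: one has $\alpha<\min\{2s,1\}$, and since $\alpha=2s-n/p-1<2s-1$ in part~(2) even the singular piece $x_n^{2s}$ of $w$ is $o(x_n^{1+\alpha})$, so the mere vanishing of $U$ on the flat boundary already yields the required decay of $u$.

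Next I would prove the one-step improvement lemma, the boundary analogue of Lemmas~\ref{bola fija} and~\ref{a ver}. Fixing $\varepsilon>0$ and invoking the half-ball approximation lemma of Remark~\ref{remark1}, a normalized solution $U$ is $\varepsilon$--close in $L^2((B_{3/4}^+)^*,y^adX)$ to a solution $\mathcal{W}$ of \eqref{label}. By Remark~\ref{remark2}, the odd reflection of $\mathcal{W}$ in $x_n$ solves the reflected equation, so $\mathcal{W}$ is smooth up to $B_{1/2}\cap\{x_n=0\}$, vanishes on $\{x_n=0\}$, has $\mathcal{W}_y$ growing like $y$ near $y=0$, and satisfies the estimates of Proposition~\ref{Prop:armonicas} up to that boundary. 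In part~(1) I take $c=\mathcal{W}(0,0)=0$, so $|\mathcal{W}(X)|\le N|X|$ near the origin and $\lambda^{-(n+1+a)}\int_{(B_\lambda^+)^*}y^a|\mathcal{W}|^2\,dX\le c_{n,a}\lambda^2$; in part~(2) I take $\ell(x)=\nabla_x\mathcal{W}(0,0)\cdot x=\mathcal{B}\cdot x$ (tangential components vanishing), so $|\mathcal{W}(X)-\ell(x)|\le N|X|^2$ and the corresponding average is $\le c_{n,a}\lambda^4$. Combining with the trace inequality on half-balls (Lemma~\ref{lem:inequality}) applied to $U-c$, resp. $U-\ell$, and with the half-ball Caccioppoli inequality (Remark~\ref{remark1}) to absorb the gradient, I get for small $\lambda$
$$\frac{1}{\lambda^n}\int_{B_\lambda^+}|U(x,0)-c|^2\,dx+\frac{1}{\lambda^{n+1+a}}\int_{(B_\lambda^+)^*}y^a|U-c|^2\,dX\le\frac{C\varepsilon^2}{\lambda^{n+1+a}}+c_{n,a}\lambda^2+\frac{C\delta}{\lambda^{n+1+a}}$$
in part~(1) (with $\lambda^4$ replacing $\lambda^2$ and $\ell$ replacing $c$ in part~(2)). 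Since $\alpha<1$ one has $c_{n,a}\lambda^2\le\tfrac13\lambda^{2\alpha}$ (resp. $1+\alpha<2$ gives $c_{n,a}\lambda^4\le\tfrac13\lambda^{2(1+\alpha)}$), so I first fix $\lambda$ small, then $\varepsilon$ small, then $\delta$ small (through Remark~\ref{remark1}) to bound each term by one third of $\lambda^{2\alpha}$, resp. $\lambda^{2(1+\alpha)}$, obtaining the one-step decay with $|c|$, resp. $|\mathcal{B}|$, bounded by a universal constant.

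Finally I would iterate. In part~(1) the iteration is the trivial one, $c_k\equiv0$: from the one-step conclusion the rescaling $U(\lambda\cdot)/\lambda^{\alpha}$ is again normalized, and repeating produces the dyadic decay directly. In part~(2), setting $\ell_0=\ell_1=0$ and assuming the decay at scale $\lambda^k$ with $\ell_k(x)=\mathcal{B}_k\cdot x$, I consider
$$\widetilde{U}(X)=\frac{U(\lambda^kX)-\ell_k(\lambda^kx)}{\lambda^{\beta k}},\qquad \beta=1+\alpha.$$
A change of variables shows $\widetilde{U}$ solves \eqref{media bola} on $B_1^+$ with rescaled data $\widetilde{A}(x)=A(\lambda^kx)$, $\widetilde{f}(x)=\lambda^{-k(-2s+\alpha+1)}f(\lambda^kx)$, $\widetilde{F}(X)=\lambda^{-k\alpha}F(\lambda^kX)$, plus the extra field $\widetilde{G}=(\lambda^{-k\alpha}(I-A(\lambda^kx))\nabla\ell_k,0)$; the normalization of $\widetilde{U}$ follows from the scaling exponents $\beta-2s=-2s+\alpha+1$ and $\beta-1=\alpha$ together with the induction hypothesis, once one notes $|\mathcal{B}_k|\le D\sum_{j\ge0}\lambda^{j\alpha}<\infty$ (here $\alpha>0$, i.e. $p>n/(2s-1)$, is used). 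Applying the one-step lemma to $\widetilde{U}$ divided by the bound $1+D^2\big(\sum_{j\ge0}\lambda^{j\alpha}\big)^2$, and undoing the change of variables, gives $\ell_{k+1}(x)=\ell_k(x)+\lambda^{\beta k}\ell(\lambda^{-k}x)$ with $|\mathcal{B}_{k+1}-\mathcal{B}_k|\le D\lambda^{k\alpha}$, so $\mathcal{B}_\infty:=\lim_k\mathcal{B}_k$ exists, and the decay at scale $\lambda^{k+1}$. A telescoping/geometric-series estimate then upgrades the dyadic bounds to $\tfrac{1}{r^n}\int_{B_r^+}|u|^2\le C_1r^{2\alpha}$ (part~(1)), resp. $\tfrac{1}{r^n}\int_{B_r^+}|u-\ell_\infty|^2\le C_1r^{2(1+\alpha)}$ (part~(2)), for all small $r$, with the stated control on $C_1$ and $|\mathcal{B}|$; undoing the normalization yields the dependence of $C_0$ on $[A]_{L^{2,0}_{\partial\Omega}(0)}$ (resp. $[A]_{L^{2,\alpha}_{\partial\Omega}(0)}$). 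I expect the main obstacle to be bookkeeping rather than conceptual: tracking, through the rescalings and the subtraction of $\ell_k$, that the auxiliary field $F+G$ stays in the correct Morrey/Campanato class with the rescaled exponents, and verifying that the Dirichlet condition genuinely annihilates both the constant and the tangential linear parts so that the comparison object is $\mathcal{B}\cdot x$; the substantive analytic inputs (Caccioppoli, the compactness step, the half-ball trace inequality, and the boundary regularity of the reflected harmonic functions) are already in place from Section~\ref{section:3}.
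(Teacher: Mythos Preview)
Your proposal is correct and follows essentially the same route as the paper: reduce to the half-ball extension problem, normalize, run the approximation--Caccioppoli--trace loop with the odd reflection of Remark~\ref{remark2} forcing $c=0$ (part~(1)) and $A=0$ in the affine comparison (part~(2)), then iterate with the exponent shifts $\alpha\mapsto -2s+\alpha$ and $\alpha\mapsto -2s+\alpha+1$. One small comment: your growth-rate paragraph explaining why no $f(0)w$ correction is needed is a consistency check rather than the operative mechanism---the real point is simply that the seminorm $[f]_{L^{2,-2s+\alpha}_{\partial\Omega}(0)}$ involves $\int_{B_r^+}|f|^2$ with no $f(0)$ subtraction, so the rescaling $\widetilde f(x)=\lambda^{-k(-2s+\alpha)}f(\lambda^k x)$ preserves the smallness hypothesis directly, and there is nothing to peel off.
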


Notice that in Theorem \ref{thm:boundary Lmenos2alpha} we do not need to subtract $w$ from $u$
to obtain the regularity up to the origin. Observe that Theorem \ref{thm:boundary Lp}
in the Dirichlet case follows from this last result
after flattening the boundary.

The rest of this section is devoted to the proof of Theorems \ref{thm:boundary L2alpha} and \ref{thm:boundary Lmenos2alpha}.

\subsection{Proof of Theorem \ref{thm:boundary L2alpha}(1)}

It is enough to prove the result for $u(x)=U(x,0)$, where
$U\in H^1((B_1^+)^*,y^adX)$ is a solution to
\begin{equation}\label{dale que va}
\begin{cases}
  \dive(y^{a}B(x)\nabla U)=\dive(y^aF),& \hbox{in}~(B_1^+)^*,\\
  -y^{a}U_y\big|_{y=0}=f,& \hbox{on}~B_1^+,\\
  U=0,&\hbox{on}~B_1\cap\{x_n=0\}.
\end{cases}
\end{equation}
Here $F$ is a $(B_1^+)^*$--valued vector field such that $F_{n+1}=0$ and satisfies the Morrey condition
 $$\sup_{0<r\leq1}\frac{1}{r^{n+1+a+2(\alpha+2s-1)}}\int_{(B_r^+)^*}y^a|F|^2\,dX<\infty.$$
After a change of variables we can assume that $B(0)=I$.

We compare $U(x,0)$ with $W(x,0)$, where $W$ is the solution to \eqref{extension w} with $\theta=f(0)$.
In particular, $W\in H^1((B_1^+)^*,y^adX)$ is a solution to
$$\begin{cases}
  \dive(y^{a}\nabla W)=0,& \hbox{in}~(B_1^+)^*,\\
  -y^{a}W_y\big|_{y=0}=f(0),& \hbox{on}~B_1^+,\\
  W=0,&\hbox{on}~B_1\cap\{x_n=0\},
\end{cases}$$
and $W(x,0)=f(0)w(x)$, for $x\in B_1^+$, with $w$ as in \eqref{solutions}.

Let $V=U-W$. Then
\begin{equation}\label{V problem}
\begin{cases}
  \dive(y^{a}B(x)\nabla V)=\dive(y^aH),& \hbox{in}~(B_1^+)^*,\\
  -y^{a}V_y\big|_{y=0}=h,& \hbox{on}~B_1^+,\\
  V=0,&\hbox{on}~B_1\cap\{x_n=0\},
\end{cases}
\end{equation}
where $h=f-f(0)$, so that $h(0)=0$, and $H=F+(I-B(x))\nabla W$, with $H_{n+1}=0$.
Given $\delta>0$ we can always assume that the conditions $(1)$--$(4)$
in Subsection \ref{subsection:proof interior(1)} hold with the appropriate changes:
$B_r^+$, $h$, $H$, $V$, $B_1^+$ and $(B_1^+)^*$ in place of $B_r$, $f$, $F$, $U$, $B_1$ and $B_1^*$, respectively.
Under those hypotheses and the proper normalization for the $L^2$ norms of $V$
and its trace, we call $V$ a normalized solution.

Now we prove that given $0<\alpha+2s<1$ there exists $0<\delta<1$,
depending only on $n$, ellipticity, $\alpha$ and $s$, such that for any normalized solution $V$ of \eqref{V problem}
(recall that $V(0,0)=0$)
$$\frac{1}{r^n}\int_{B_r^+}|V(x,0)|^2\,dx\leq Cr^{2(\alpha+2s)},\quad\hbox{for all}~r>0~\hbox{sufficiently
small},$$
and $C\leq C_0$, for some constant $C_0$ depending only on $n$, ellipticity, $\alpha$ and $s$.
The strategy of the proof is the same as the proof of Theorem \ref{thm:interior L2alpha}(1) presented in
Subsection \ref{subsection:proof interior(1)}. Here we explain the changes that need to be made.
We follow parallel steps as those in the proof of Lemma \ref{bola fija}.
Indeed, by using Remarks \ref{remark1} and \ref{remark2} 
we can prove that there exist $0<\delta,\lambda<1$ such that
$$\frac{1}{\lambda^n}\int_{B_\lambda^+}|V(x,0)|^2\,dx+\frac{1}{\lambda^{n+1+a}}
\int_{(B_\lambda^+)^*}y^a|V|^2\,dX<\lambda^{2(\alpha+2s)}.$$
Notice that in the present case we have in Lemma \ref{bola fija} that $c=0$.
Now the rescaling process of Lemma \ref{lem:claim constant} comes into play,
but now we must take $c_k=0$ for all $k\geq0$. Every step
goes through by just changing balls by half balls, because when we rescale we always
get a normalized solution to \eqref{V problem}.

\subsection{Proof of Theorem \ref{thm:boundary L2alpha}(2)}

As in the proof of part (1), we just have to check that there exists $0<\delta<1$ such that
in the proper normalized situation for \eqref{V problem}
there is a linear function $\ell_\infty(x)=B_\infty\cdot x$ such that
$$\frac{1}{r^n}\int_{B_r^+}|V(x,0)-\ell_\infty(x)|^2\,dx\leq Cr^{2(\alpha+2s)},$$
for $r>0$ sufficiently small. Now we suppose that $F(0)=0$ and that we have the Campanato condition
$$\sup_{0<r\leq1}\frac{1}{r^{n+1+a+2(\alpha+2s-1)}}\int_{(B_r^+)^*}y^a|F|^2\,dX<\delta^2.$$
But again we can follow parallel
steps as those of the proof of Theorem \ref{thm:interior L2alpha}(2).
Observe that in our case the independent term $A$ in the linear function
that appears in Lemma \ref{a ver} is $0$ since for the approximating harmonic function
$\mathcal{W}$ we have $\mathcal{W}(0,0)=0$. This is essential in the iteration process
in order to always have a rescaled solution that is $0$ on $B_1\cap\{x_n=0\}$.
Further details are omitted.

\subsection{Proof of Theorem \ref{thm:boundary Lmenos2alpha}}

As before, it is enough to prove the result for $u(x)=U(x,0)$, where
$U\in H^1((B_1^+)^*,y^adX)$ is a solution to \eqref{dale que va},
where $F_{n+1}=0$. For part (1) we assume the Morrey condition
$$\sup_{0<r\leq1}\frac{1}{r^{n+1+a+2(\alpha-1)}}\int_{(B_r^+)^*}y^a|F|^2\,dX<\infty,$$
while for part (2) we suppose that $F(0)=0$ and that we have the Campanato condition
$$\sup_{0<r\leq1}\frac{1}{r^{n+1+a+2\alpha}}\int_{(B_r^+)^*}y^a|F|^2\,dX<\infty.$$
Now the proof follows exactly the same steps of the proof of Theorem \ref{thm:interior Lmenos2alpha},
by just replacing $B_r$ by $B_r^+$. We also observe that in this case the constant $c$ and the independent
term $A$ that come from the approximating harmonic function $\mathcal{W}$ are both $0$.
We omit further details.

\section{The case of Neumann boundary condition}\label{section:7}

In this section we sketch the proof of 
the results in the case of the Neumann problem \eqref{uman}.
Recall that the domain of $L_N$ is the Sobolev space $H^1(\Omega)$.
There exists an orthonormal basis of $L^2(\Omega)$ consisting
 of eigenfunctions $\varphi_k\in H^1(\Omega)$, $k=0,1,2,\ldots,$
that correspond to eigenvalues $0=\mu_0<\mu_1\leq\mu_2\leq\cdots\nearrow\infty$ of $L_N$. 
The domain of the fractional operator
$L_N^s$ is the Hilbert space $\mathcal{H}^s_N$ of functions
$u\in L^2(\Omega)$ such that $\int_\Omega u\,dx=0$ and $\sum_{k=1}^\infty\mu_k^su_k^2\equiv
\sum_{k=1}^\infty\mu_k^s|\langle u,\varphi_k\rangle_{L^2(\Omega)}|^2<\infty$. We define $L_N^su$ by
\eqref{definition Neumann} in the dual space $\mathcal{H}_N^{-s}:=(\mathcal{H}^s_N)'$. Notice that $\langle L_N^su,1\rangle=0$.
The heat semigroup generated by $L_N$ is given by
$$e^{-tL_N}u(x)=\sum_{k=0}^\infty e^{-t\mu_k}u_k\varphi_k(x)
=\int_\Omega W^N_t(x,z)u(z)\,dz,$$
where $W^N_t(x,z)$ is the corresponding heat kernel. Observe that $e^{-tL_N}1(x)=1$ for all $x\in\Omega$, $t>0$.
As in Theorem \ref{thm:puntual} we can prove that \eqref{puntual Neumann} holds
for $u,\psi\in\mathcal{H}^s_N$ and 
$$K_s^N(x,z):=\frac{1}{2|\Gamma(-s)|}\int_0^\infty W_t^N(x,z)\,\frac{dt}{t^{1+s}}.$$
It is well known that if $\Omega$ is an exterior domain or the region lying above the graph of a
Lipschitz function, then the heat kernel $W^N_t(x,z)$ as global upper Gaussian estimates. If
the domain is bounded then the Gaussian estimate holds only for short times
 and the heat kernel is bounded in $x$, $z$ and $t$ as $t\to\infty$. For this see \cite{Auscher}
 and the references therein.
Hence, as we did in Section \ref{section:2} we can prove that the kernel $K^N_s(x,z)$ satisfies the estimate
$$0\leq K_s^N(x,z)\leq\frac{c_{\Omega,n,s}}{|x-z|^{n+2s}},\quad x,z\in\Omega.$$
For the heat kernel related to the Neumann Laplacian we have two-sided Gaussian estimates
(see \cite{Saloff-Coste}, also \cite{Stinga-Volzone}, and the references therein),
which imply that in this case the estimate for $K_s^N(x,z)$ from above also holds from below with a different constant.
The extension problem for $L_N^s$ is given as follows. The solution $U$ to
\begin{equation}\label{extension Neumann}
\begin{cases}
\dive_x(A(x)\nabla_xU)+\tfrac{a}{y}U_y+U_{yy}=0,&\hbox{in}~\Omega\times(0,\infty),\\
\partial_AU(x,y)=0,&\hbox{on}~\partial\Omega\times[0,\infty),\\
U(x,0)=u(x),&\hbox{on}~\Omega,
\end{cases}
\end{equation}
satisfies, for $c_s=|\Gamma(-s)|/(4^s\Gamma(s))>0$,
$$-\frac{1}{2s}\lim_{y\to0^+}y^{a}U_y(x,y)=-\lim_{y\to0^+}\frac{U(x,y)-U(x,0)}{y^{2s}}=c_sL^s_Nu(x),\quad
\hbox{in}~\mathcal{H}_N^{-s},$$
see \cite{Stinga,Stinga-Torrea}.
Similar scaling properties as those of Section \ref{section:2} hold for $L_N^s$.
Parallel to Theorem \ref{thm:LSW},
the fundamental solution $G_s^N(x,z)$ of $L^s_N$ verifies the interior estimate
$$G_s^N(x,z)\sim\frac{c_{\Omega,n,s}}{|x-z|^{n-2s}},\quad n\neq 2s,$$
and it is logarithmic when $n=2s$. The interior Harnack inequality for $L^s_N$ is also true.

Next we show that the domain of $L_N^s$ is the fractional Sobolev space $H^s(\Omega)$,
which is the closure of $C^\infty(\Omega)$ with respect to the norm
$\|u\|_{H^s(\Omega)}^2=\|u\|^2_{L^2(\Omega)}+[u]^2_{H^s(\Omega)}$.
Notice that the solution $U$ to the extension problem \eqref{extension Neumann}
belongs to $H^1(\Omega\times(0,\infty),y^adX)$ and that for each fixed $y\geq0$ we have
$\int_\Omega U(x,y)\,dx=0$. Since the trace of $U$ is an element of 
the fractional Sobolev space $H^s(\Omega)$ (see for example \cite{Lions-Magenes}),
it turns out that $\mathcal{H}^s_N\subset H^s(\Omega)$.
For the other inclusion, we notice that the energy for the extension problem for $L_N^s$ is comparable
to the energy for the extension problem for the Neumann Laplacian $-\Delta_N$. This and the following Lemma give that
if $\int_\Omega u\,dx=0$ then $u\in\mathcal{H}_N^s$ if and only if $u\in H^s(\Omega)$.

\begin{lem}
Let $u:\Omega\to\R$ such that $\int_\Omega u\,dx=0$.
For any $0<s<1$, $u\in \Dom((-\Delta_N)^s)$ if and only if $u\in H^s(\Omega)$. 
In this case we have
$$\|(-\Delta_N)^{s/2}u\|_{L^2(\Omega)}\sim [u]_{H^s(\Omega)}.$$
\end{lem}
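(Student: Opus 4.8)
The plan is to prove the equivalence of \emph{full} norms $\Dom((-\Delta_N)^{s/2})=H^s(\Omega)$ for every $u\colon\Omega\to\R$, and then to remove the $L^2$ contributions on the mean--zero subspace. Throughout write $u=\sum_{k\geq0}u_k\varphi_k$, so that $u\in\Dom((-\Delta_N)^{s/2})$ precisely when $\sum_k\mu_k^su_k^2<\infty$, and in that case $\|(-\Delta_N)^{s/2}u\|_{L^2(\Omega)}^2=\sum_k\mu_k^su_k^2$.

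First, assume $u\in\Dom((-\Delta_N)^{s/2})$. Let $U$ solve the Neumann extension problem \eqref{extension Neumann}; by the analogue of Theorem~\ref{thm:extension} for $-\Delta_N$ it is given by the semigroup formula \eqref{formula con el semigrupo} and satisfies $\int_\Omega\int_0^\infty y^a|\nabla U|^2\,dX\sim\|(-\Delta_N)^{s/2}u\|_{L^2(\Omega)}^2$ together with $\int_\Omega\int_0^\infty y^a|U|^2\,dX\leq C_{\Omega,s}\|u\|_{L^2(\Omega)}^2$, hence $U\in H^1(\Omega\times(0,\infty),y^adX)$. Applying the trace inequality used in the proof of Lemma~\ref{lem:inequality} (Lions' inequality \cite{Lions} on the half space composed with a bounded extension operator, equivalently \cite[Proposition~2.1]{CDDS}) gives $\|u\|_{H^s(\Omega)}=\|U(\cdot,0)\|_{H^s(\Omega)}\leq C\|U\|_{H^1(\Omega\times(0,\infty),y^adX)}\leq C\big(\|u\|_{L^2(\Omega)}+\|(-\Delta_N)^{s/2}u\|_{L^2(\Omega)}\big)$.

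Conversely, assume $u\in H^s(\Omega)$. The point is to produce \emph{some} extension $V\in H^1(\Omega\times(0,\infty),y^adX)$ with $V(\cdot,0)=u$ before knowing $u$ lies in the domain. To this end, extend $u$ to $\widetilde u\in H^s(\R^n)$ with $\|\widetilde u\|_{H^s(\R^n)}\leq C_\Omega\|u\|_{H^s(\Omega)}$ using a bounded Sobolev extension operator off the bounded Lipschitz domain $\Omega$ (see \cite{Stein}), take the Caffarelli--Silvestre extension $\widetilde V$ of $\widetilde u$ to $\R^{n+1}_+$, which obeys $\int_{\R^{n+1}_+}y^a\big(|\nabla\widetilde V|^2+|\widetilde V|^2\big)\leq C\|\widetilde u\|_{H^s(\R^n)}^2$ by the surjectivity half of the weighted trace theorem on the half space (see \cite{CDDS, Lions-Magenes}), and restrict to $\Omega\times(0,\infty)$. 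Expanding $V(x,y)=\sum_k g_k(y)\varphi_k(x)$ with $g_k(0)=u_k$, Parseval in $x$ yields $\int_\Omega\int_0^\infty y^a|\nabla V|^2\,dX=\sum_k\int_0^\infty y^a\big(\mu_kg_k(y)^2+g_k'(y)^2\big)\,dy$. The one--dimensional minimization $\inf\{\int_0^\infty y^a(\mu g^2+(g')^2)\,dy:g(0)=1\}\sim\mu^s$, whose minimizer $g(y)$ is a constant multiple of $y^s\mathcal{K}_s(\mu^{1/2}y)$ (the profile appearing in \eqref{discovery channel}), then gives $\sum_k\mu_k^su_k^2\leq C\int_\Omega\int_0^\infty y^a|\nabla V|^2\,dX\leq C\|u\|_{H^s(\Omega)}^2<\infty$, so $u\in\Dom((-\Delta_N)^{s/2})$ with $\|(-\Delta_N)^{s/2}u\|_{L^2(\Omega)}\leq C\|u\|_{H^s(\Omega)}$.

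Combining the two directions, $\|u\|_{H^s(\Omega)}^2\sim\|u\|_{L^2(\Omega)}^2+\|(-\Delta_N)^{s/2}u\|_{L^2(\Omega)}^2$ for all $u$. Finally, restrict to $\int_\Omega u\,dx=0$, i.e. $u_0=0$: since $\mu_1>0$ we have $\|u\|_{L^2(\Omega)}^2=\sum_{k\geq1}u_k^2\leq\mu_1^{-s}\sum_{k\geq1}\mu_k^su_k^2=\mu_1^{-s}\|(-\Delta_N)^{s/2}u\|_{L^2(\Omega)}^2$, while $\|u\|_{L^2(\Omega)}\leq C_\Omega[u]_{H^s(\Omega)}$ by the Poincar\'e--Wirtinger inequality for the Gagliardo seminorm on the bounded connected domain $\Omega$ (standard, via the compact embedding $H^s(\Omega)\hookrightarrow L^2(\Omega)$). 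Feeding these two bounds into the full--norm equivalence yields $\|(-\Delta_N)^{s/2}u\|_{L^2(\Omega)}\sim[u]_{H^s(\Omega)}$, as asserted. The main obstacle is the converse direction: one cannot simply invoke Theorem~\ref{thm:extension}, whose energy identity presupposes $u\in\H^s$, so one must first build a finite--energy weighted extension of an arbitrary $u\in H^s(\Omega)$ — which is where the Sobolev extension off $\Omega$ and the surjectivity of the weighted half--space trace enter — and then convert the energy bound into the spectral sum $\sum_k\mu_k^su_k^2$ through the one--dimensional Bessel minimization.
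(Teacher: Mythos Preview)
Your argument is correct, but it is genuinely different from the paper's. The paper does not pass through the weighted trace theorem or through the Bessel minimization at all. Instead it fixes the \emph{Poisson} extension $U(x,y)=\int_\Omega P_y^{s,N}(x,z)u(z)\,dz$, writes
\[
-\frac{\langle U(\cdot,y),u\rangle-\|u\|_{L^2}^2}{y^{2s}}
=\frac{1}{2y^{2s}}\int_\Omega\int_\Omega P_y^{s,N}(x,z)\,(u(x)-u(z))^2\,dx\,dz,
\]
and then uses the \emph{two--sided Gaussian bounds} for the Neumann heat kernel on $\Omega$ to get $P_y^{s,N}(x,z)\sim y^{2s}(|x-z|^2+y^2)^{-(n+2s)/2}$. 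Dividing by $y^{2s}$ and letting $y\to0^+$ yields directly $\|(-\Delta_N)^{s/2}u\|_{L^2}^2\sim[u]_{H^s(\Omega)}^2$, with no separate treatment of the $L^2$ part and no Poincar\'e--Wirtinger step.

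What each approach buys: the paper's route is shorter and produces the seminorm equivalence in one stroke, but it relies on the two--sided Gaussian estimate for $W_t^N$, which is specific to the Neumann Laplacian on a bounded Lipschitz domain. Your route --- Sobolev extension off $\Omega$, Caffarelli--Silvestre lift, Parseval in the Neumann eigenbasis, and the scaling identity $\inf\{\int_0^\infty y^a(\mu g^2+(g')^2):g(0)=1\}\sim\mu^s$ --- is more robust: it would work for any self--adjoint $L_N$ with discrete spectrum, without needing pointwise heat--kernel control, at the price of invoking more off--the--shelf machinery (Stein extension, surjectivity of the weighted trace, and the fractional Poincar\'e inequality to strip the $L^2$ terms at the end).
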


\begin{proof}
The idea is similar to the proof of Theorem 2.4 in \cite{Stinga-Volzone} and here we sketch the steps.
Let $U$ be the solution to the extension problem for the fractional Neumann Laplacian in $\Omega$. Then
$$-\frac{\langle U(\cdot,y),u\rangle_{L^2(\Omega)}-\langle u,u\rangle_{L^2(\Omega)}}{y^{2s}}\to
c_s\langle (-\Delta_N)^su,u\rangle_{L^2(\Omega)}=c_s\|(-\Delta_N)^{s/2}u\|^2_{L^2(\Omega)},$$
as $y\to0^+$. Now, with the Poisson kernel $P_y^{s,N}$ for the Neumann Laplacian extension problem
(as in \eqref{Poisson formula}--\eqref{Poisson kernel}
but with the Neumann heat kernel in place of $W_t(x,z)$) we get
$$\langle U(\cdot,y),u\rangle_{L^2(\Omega)}-\langle u,u\rangle_{L^2(\Omega)}
=\int_\Omega\int_\Omega P_y^{s,N}(x,z)\big(u(z)u(x)-u(x)^2\big)\,dx\,dz.$$
Here we used the fact that
$$\int_\Omega P_y^{s,N}(x,z)\,dz=1,\quad\hbox{for all}~x\in\Omega,t>0.$$
By exchanging the roles of $x$ and $z$ and using that $P_y^{s,N}(x,z)=P_y^{s,N}(z,x)$, we get
$$\langle U(\cdot,y),u\rangle_{L^2(\Omega)}-\langle u,u\rangle_{L^2(\Omega)}
=\frac{1}{2}\int_\Omega\int_\Omega\big(u(z)-u(x)\big)
P_y^{s,N}(x,z)\,dz\,dx.$$
But now, since the heat kernel for the Neumann Laplacian has two sided Gaussian estimates
\cite{Saloff-Coste, Stinga-Volzone},
we readily get
$$P_y^{s,N}(x,z)\sim\frac{y^{2s}}{(|x-z|^2+y^2)^{(n+2s)/2}}.$$
Therefore, by dividing by $y^{2s}$
and taking $y\to0^+$ above we arrive to $\|(-\Delta_N)^{s/2}u\|_{L^2(\Omega)}\sim[u]_{H^s(\Omega)}$.
\end{proof}

Given $f\in\mathcal{H}^{-s}_N(\Omega)$ (observe that $\langle f,1\rangle=0$), there exists
a unique solution $u\in H^s(\Omega)$ to \eqref{uman} with $\int_\Omega u\,dx=0$.

It is clear that the interior regularity results of Theorems \ref{thm:interior Calpha} and \ref{thm:interior Lp}
hold for the case of the fractional Neumann operator $L^s_N$. Indeed, the proof 
presented in Section \ref{section:4} is based on a Caccioppoli
estimate for the (localized)
 extension problem and the Dirichlet boundary condition plays no significant role there.

The boundary results deserve some attention.
The reason why the interior regularity should hold up to the boundary 
becomes apparent once we look for the half space case.
 Consider the 
fractional Neumann Laplacian $-\Delta_N^+$ in a half space $\R^n_+$. For $u$ having mean zero we have
$$(-\Delta_N^+)^su(x)=\frac{1}{\Gamma(-s)}\int_0^\infty\big(e^{t\Delta_N^+}u(x)-u(x)\big)
\,\frac{dt}{t^{1+s}},\quad x\in\R^n_+.$$
We denote by $u_e$ the even reflection of $u$ with respect to the variable $x_n$:
$$u_e(x)=\begin{cases}
u(x),&\hbox{if}~x_n\geq0,\\
u(x^*),&\hbox{if}~x_n<0.
\end{cases}$$
Then the method of reflections give $e^{-t\Delta_N^+}u(x)=e^{t\Delta}u_e(x)$, for $x\in\R^n_+$. Therefore,
$$(-\Delta_N^+)^{s}u(x)=c_{n,s}\int_{\R^n_+}\big(u(x)-u(z)\big)
  \left(\frac{1}{|x-z|^{n+2s}}+\frac{1}{|x-z^\ast|^{n+2s}}\right)\,dz,\quad x\in\R^n_+.$$
Now, we easily see that if $(-\Delta_N^+)^su(x)=f(x)$, with $f$ having zero mean in $\R^n_+$, then $(-\Delta)^su_e(x)=f_e(x)$,
for $x\in\R^n$. As a conclusion, by applying Propositions \ref{Prop:Silvestre1} and \ref{Prop:Silvestre2} we get the following.

\begin{thm}[Global regularity in half space -- Neumann case]\label{Thm:Laplacian half space Neumann}
Let $u$ be a bounded solution to
$$\begin{cases}
   (-\Delta_N^+)^su=f,&\hbox{in}~\R^n_+,\\
   \partial_{x_n}u=0,&\hbox{on}~\partial\R^n_+=\{x_n=0\},
  \end{cases}$$
  where $f$ has zero mean.
\begin{itemize}
\item Suppose that $f\in C^{0,\alpha}(\overline{\R^n_+})$, $0<\alpha\leq1$. Then
\begin{itemize}
 \item[(1)] If $\alpha+2s<1$ then $u\in C^{0,\alpha+2s}(\overline{\R^n_+})$ and 
 $$\|u\|_{C^{0,\alpha+2s}(\overline{\R^n_+})}\leq C\big(\|u\|_{L^\infty(\R^n_+)}+\|f\|_{C^{0,\alpha}(\overline{\R^n_+})}\big).$$
 \item[(2)] If $1<\alpha+2s<2$ then $u\in C^{1,\alpha+2s-1}(\overline{\R^n_+})$ and
 $$\|u\|_{C^{1,\alpha+2s-1}(\overline{\R^n_+})}\leq C\big(\|u\|_{L^\infty(\R^n_+)}+\|f\|_{C^{0,\alpha}(\overline{\R^n_+})}\big).$$
 \item[(3)] If $2<\alpha+2s<3$ then $u\in C^{2,\alpha+2s-2}(\overline{\R^n_+})$ and
 $$\|u\|_{C^{2,\alpha+2s-2}(\overline{\R^n_+})}\leq C\big(\|u\|_{L^\infty(\R^n_+)}+\|f\|_{C^{0,\alpha}(\overline{\R^n_+})}\big).$$
\end{itemize}
\item Suppose that $f\in L^\infty(\R^{n}_+)$. Then
\begin{itemize}
\item[(i)] If $0<2s<1$ then $u\in C^{0,2s}(\overline{\R^n_+})$ and
 $$\|u\|_{C^{0,2s}(\overline{\R^n_+})}\leq C\big(\|u\|_{L^\infty(\R^n_+)}+\|f\|_{L^\infty(\R^n_+)}\big).$$
 \item[(ii)] If $2s=1$ then $u$ is in the Zygmund space $\Lambda_\ast(\overline{\R^n_+})$ and
 $$\|u\|_{\Lambda_\ast(\overline{\R^n_+})}\leq C\big(\|u\|_{L^\infty(\R^n_+)}+\|f\|_{L^\infty(\R^n_+)}\big).$$
 \item[(iii)] If $1<2s<2$ then $u\in C^{1,2s-1}(\overline{\R^n_+})$ and
 $$\|u\|_{C^{1,2s-1}(\overline{\R^n_+})}\leq C\big(\|u\|_{L^\infty(\R^n_+)}+\|f\|_{L^\infty(\R^n_+)}\big).$$
\end{itemize}
\end{itemize}
All the constants $C$ above depend only on $n$, $\alpha$ and $s$.
\end{thm}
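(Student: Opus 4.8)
The plan is to reduce the Neumann half-space problem to the known results for the fractional Laplacian on all of $\R^n$ by means of the even reflection across $\{x_n=0\}$, exactly mirroring the argument already carried out in the Dirichlet case in Theorem \ref{Thm:Laplacian half space}. The first step is to recall, as indicated in the discussion preceding the statement, that the method of reflections gives $e^{-t\Delta_N^+}u(x)=e^{t\Delta}u_e(x)$ for $x\in\R^n_+$, where $u_e$ is the even extension of $u$. Plugging this into the semigroup formula for $(-\Delta_N^+)^s$ yields $(-\Delta_N^+)^su(x)=(-\Delta)^su_e(x)$ for $x\in\R^n_+$.

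The second step is to show that the same identity, with the appropriate even reflection of the right-hand side, holds on all of $\R^n$, i.e.\ $(-\Delta)^su_e(x)=f_e(x)$ for every $x\in\R^n$. For $x_n\ge 0$ this is the identity from the first step. For $x_n<0$ one writes the semigroup formula at the point $x$, uses that $u_e(x)=u(x^*)$ and that $e^{t\Delta}u_e(x)=e^{t\Delta}u_e(x^*)=e^{-t\Delta_N^+}u(x^*)$ by symmetry of the heat kernel under the reflection $x\mapsto x^*$, and concludes $(-\Delta)^su_e(x)=(-\Delta_N^+)^su(x^*)=f(x^*)=f_e(x)$. (Note the sign is $+$ here, unlike the Dirichlet case, which is precisely why no jump discontinuity is introduced.) Since $u$ is bounded, $u_e\in L^\infty(\R^n)$ and it coincides with $u$ on $\overline{\R^n_+}$.

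The third step is to invoke the Schauder-type estimates for $(-\Delta)^s$ on $\R^n$ already collected in the excerpt. If $f\in C^{0,\alpha}(\overline{\R^n_+})$, then its even reflection $f_e$ is globally in $C^{0,\alpha}(\R^n)$ (even reflection of a H\"older function across a hyperplane preserves the H\"older norm, with no compatibility condition needed at $x_n=0$, in contrast to the odd reflection used for Dirichlet); applying Proposition \ref{Prop:Silvestre1} to $u_e$ and restricting to $\overline{\R^n_+}$ gives items (1)--(3), with the stated dependence of constants on $n$, $\alpha$, $s$. If instead $f$ is merely bounded, then $f_e\in L^\infty(\R^n)$ and Proposition \ref{Prop:Silvestre2} applied to $u_e$ yields items (i)--(iii), including the Zygmund space endpoint $2s=1$; restricting back to the half space and noting that all the relevant seminorms of $u$ are controlled by those of $u_e$ finishes the proof.

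The main point to be careful about, rather than a genuine obstacle, is the verification that the even reflection behaves well for the function class in question: one must check that $u_e$ is indeed a legitimate bounded solution to $(-\Delta)^su_e=f_e$ in the pointwise/semigroup sense used by Propositions \ref{Prop:Silvestre1} and \ref{Prop:Silvestre2}, which amounts to confirming the reflection identity for the heat semigroup on the half space with Neumann condition and the integrability needed for the singular integral representation of $(-\Delta)^{-s}f_e$. Everything else is a direct transcription of the Dirichlet argument with the odd reflection replaced by the even one and the minus sign replaced by a plus; the Neumann boundary condition $\partial_{x_n}u=0$ is exactly what makes $u_e$ a solution across the boundary with no loss of regularity. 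Therefore the details are omitted.
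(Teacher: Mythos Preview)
Your proposal is correct and follows essentially the same approach as the paper: even reflection of $u$ and $f$ across $\{x_n=0\}$, the identity $(-\Delta)^su_e=f_e$ on all of $\R^n$ derived from the heat semigroup reflection $e^{-t\Delta_N^+}u(x)=e^{t\Delta}u_e(x)$, and then an application of Propositions~\ref{Prop:Silvestre1} and~\ref{Prop:Silvestre2}. Your write-up is in fact more explicit than the paper's, spelling out why even reflection preserves $C^{0,\alpha}$ without a boundary compatibility condition and why no sign change (hence no jump) occurs for $x_n<0$.
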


For the general case, as we did before for the Dirichlet case,
it is enough to prove the regularity at the origin for the solution $U$
to the extension problem
$$\begin{cases}
\dive(y^aB(x)\nabla U)=\dive(y^aF),&\hbox{in}~(B_1^+)^*,\\
-y^aU_y\big|_{y=0}=f,&\hbox{on}~B_1^+,\\
\partial_AU(x,y)=0,&\hbox{on}~B_1^*\cap\{x_n=0\},
\end{cases}$$
where $A$ and $F$ have the corresponding regularity.
We can assume that $B(0)=I$, so that $\partial_AU(0,0)=-\partial_{x_n}U(0,0)=0$.
The same Caccioppoli estimate holds in this case.
The approximation lemma can then be proved to
get an approximating harmonic
function $\mathcal{W}\in H^1((B_{3/4}^*)^+,y^adX)$,
see also Remark \ref{remark1}.
By performing an even reflexion instead of an odd one we can reproduce 
the argument in Remark \ref{remark2} and conclude that $\mathcal{W}$ has the desired regularity.
From here on we can repeat the arguments given in Section \ref{section:6}.
Details are left to the interested reader.



\end{document}